\documentclass[11pt]{article}
\usepackage[dvips]{epsfig}
\usepackage{amsmath}
\usepackage{amsfonts}
\usepackage{amssymb}
\usepackage{graphicx}
\usepackage{latexsym,amssymb,amsmath,amsthm,amsfonts}
\usepackage{psfrag}
\usepackage{bbm}
\usepackage{float}
\usepackage{latexsym}
\usepackage{enumerate}
\usepackage{color}
\usepackage{xcolor}
\theoremstyle{plain}

\newtheorem{thm}{Theorem}[section]
\newtheorem{cor}{Corollary}[section]
\newtheorem{prop}{Proposition}[section]
\newtheorem{rem}{Remark}[section]
\newtheorem{exam}{Example}[section]

\numberwithin{equation}{section}

\newcommand{\be}{\begin{equation}}
\newcommand{\ee}{\end{equation}}
\newcommand{\bi}{\bibitem}
\newcommand{\ben}{\begin{enumerate}}
\newcommand{\een}{\end{enumerate}}
\newcommand{\beq}{\begin{eqnarray}}
\newcommand{\eeq}{\end{eqnarray}}
\newcommand{\beqn}{\begin{eqnarray*}}
\newcommand{\eeqn}{\end{eqnarray*}}

\title{Sharp $L^p$-uncertainty principles on Finsler measure spaces
\footnotetext{The project is partially supported by NNSFC (Nos.12471044, 12071423) and Zhejiang Provincial NSFC (No. LZ26A010004). }
}

\author{\small{Ranran Li and Qiaoling Xia}\\
{\small {\it Department of Mathematics, School of Sciences }}\\ {\small{\it  Hangzhou Dianzi University}}\\
 {\small {\it Hangzhou, 310018, Zhejiang Province, P.R.China}}\\
 {\small {\it E-mail address: 2343070107@hdu.edu.cn; xiaqiaoling$@$hdu.edu.cn}}}

\date{}
\begin{document}
\maketitle{}

\begin{abstract}
In this paper, we prove the $L^p(p>1)$-uncertainty principles for any $1<p<n$, including the classical Heisenberg-Pauli-Weyl inequality, Caffarelli-Kohn-Nirenberg interpolation inequality and Hardy inequality in $\mathbb R^n$ as special cases,  on $n(\geq 2)$-dimensional forward complete and noncompact Finsler measure spaces $(M, F, \mathfrak{m})$ with curvatures bounded from above or below by constants. Further, we characterize the sharpness of $L^p$-uncertainty principles in terms of the reversibility of $F$ and the bounds of flag (or Ricci) curvature and S-curvature induced by the measure $\mathfrak m$ and obtained some rigidity results, which generalize the related ones in \cite{HKZ} in Finslerian case and \cite{KKPZ} in Riemannian case.

{\small{\it  MSC 2010: }} 26D10, 53C60, 53C23

{\small{\it Keywords:} Finsler measure space, uncertainty principle, comparison theorem, curvature, sharp constant.}
\end{abstract}

\section{Introduction}

Heisenberg uncertainty principle in quantum mechanics states that the position and the momentum of particles cannot be both determined explicitly but only in a probabilistic sense with a certain uncertainty (\cite{Hei}). Its rigorous mathematical formulation, which was due to Pauli and Weyl (\cite{We}), states that a function itself and its Fourier transform cannot be well localized simultaneously.  In $\mathbb R^n$, the Heisenberg-Pauli-Weyl uncertainty principle is described by the following inequality (also called the {\it Heisenberg-Pauli-Weyl inequality}):
\beq \left(\int_{\mathbb R^n}|\nabla u|^2dx\right)\left(\int_{\mathbb R^n}|x|^2u^2dx\right)\geq \frac{n^2}4\left(\int_{\mathbb R^n}u^2dx\right)^2 \label{HUP}\eeq for any  $u\in C_0^\infty(\mathbb R^n)$, in which the constant $\frac{n^2}4$ is sharp and the extremal functions are given (up to a constant) by Gaussian functions $u(x)=e^{-c|x|^2}$ for some constant $c>0$.

On the other hand, the {\it Caffarelli-Kohn-Nirenberg interpolation inequality} established in \cite{CKN} plays an important role in the theory of functional analysis and PDE. It is described in $\mathbb R^n$ by
\beq \left(\int_{\mathbb R^n}|\nabla u|^2dx\right)\left(\int_{\mathbb R^n} \frac{|u|^{2p-2}}{|x|^{2q-2}}dx\right)\geq \frac{(n-q)^2}{p^2}\left(\int_{\mathbb R^n}\frac{|u|^p}{|x|^q}dx\right)^2 \label{CKN}\eeq for any $0< q< 2< p$, $2 < n <\frac{2(p-q)}{p-2}$ and $u\in C_0^\infty(\mathbb R^n)$, where the constant $\frac{(n-q)^2}{p^2}$ is sharp and the extremal functions are given (up to a constant) by $u(x)=\left(c+|x|^{2-q}\right)^{\frac 1{2-p}}$ for some constant $c>0$. Obviously, when $p\rightarrow 2$ and $q\rightarrow 0$, (\ref{CKN}) turns to be (\ref{HUP}). When $p\rightarrow 2$ and $q\rightarrow 2$, (\ref{CKN}) turns to be the {\it Hardy inequality}, which states that the $L^2$-norm of the singular term $u(x)/|x|$ is controlled by the $L^2$-Dirichlet norm of the function $u\in C_0^\infty(\mathbb R^n)$ (\cite{HPL}, \cite{BEL}), i.e.,
\beq \int_{\mathbb R^n}|\nabla u|^2dx\geq \frac{(n-2)^2}4\int_{\mathbb R^n}\frac{u^2}{|x|^2}dx \label{Har}\eeq for any $u\in C_0^\infty(\mathbb R^n)$, where the constant $\frac{(n-2)^2}4$ is sharp but never achieved. Note that the right hand side of (\ref{CKN}) and (\ref{Har}) are large if $u$ is localized to the origin ($x=0$). Thus their momentum are large as well. Because of this, the above three inequalities are uniformly called the {\it $L^2$-uncertainty principle}.

Based on the related works in $\mathbb R^n$, the uncertainty principles (including some weighted inequalities) have been widely investigated and developed in curved spaces, such as Riemannian manifolds, Finsler manifolds and even more general metric measure spaces under some curvatures conditions (\cite{CX}--\cite{DD}, \cite{HKZ}-\cite{KLZ}, \cite{MM} etc. and references therein).  In particular, Huang-Krist\'aly-Zhao first extended the classical uncertainty principles in $\mathbb R^n$ to the Finsler setting and obtained the sharpness of the $L^2$-uncertainty principles on forward complete Finsler measure spaces with  nonpositive flag curvature and S-curvature or nonnegative Ricci curvature and S-curvature (\cite{HKZ}). On the other hand, Krist\'aly-Li-Zhao found the failure of $L^2$-uncertainty principles on forward complete Finsler measure spaces  with Ric$\geq -(n-1)k^2$ and S-curvature $\mathbf S\geq (n-1)h$ for $h>k\geq 0$ (\cite{KLZ}). A natural question is to determine the validity of $L^2$-uncertainty principles on forward complete Finsler measure spaces with Ric$\leq -(n-1)k^2$ and $\mathbf S\leq (n-1)h$ for some $h, k\in \mathbb R$, or whether there are other forms of uncertainty principles on $(M, F, \mathfrak m)$ with Ric$\geq -(n-1)k^2$ and $\mathbf S\geq (n-1)h$ for $h>k\geq 0$. In this paper, we try to answer these questions. In fact we obtain more general $L^p$-Heisenberg-Pauli-Weyl inequality, $L^p$-Caffarelli-Kohn-Nirenberg interpolation inequality and $L^p$-Hardy inequality etc., uniformly called the {\it $L^p$-uncertainty principle}, and their sharpness on forward complete and noncompact Finsler measure spaces with curvatures bounded from above or below. To state our results, let us introduce some terminologies and notations. See Section 2 for more details.

Let $(M, F, \mathfrak m)$ be a Finsler measure space equipped with a Finsler metric $F$ and a smooth measure $\mathfrak m$, and $g=(g_{ij}(x, y))$ be the fundamental tensor of $F$. If $F(x, -y)=F(x, y)$ for any $(x, y)\in TM$, then we say that $F$ is {\it reversible}. Otherwise, $F$ is said to be {\it nonreversible}. In this case, we define the {\it reversibility} of $F$  by
$$\Lambda: =\sup_{x\in M}\Lambda_x, \ \ \ \ \ {\rm where}\ \ \Lambda_x=\sup_{y\in T_x M\backslash\{0\}} \frac{F(x, -y)}{F(x,y)}.$$
Obviously, $1\leq\Lambda\leq+\infty$ and $F$ is reversible if and only if $\Lambda=1$. Riemannian metrics are a class of special reversible Finsler metrics. As in Riemannian case, the Riemannian geometric quantities uniquely determined by the metric $F$ are the flag curvature $\mathbf K$ and the Ricci curvature ${\mathbf{Ric}}$, which are natural generalizations of sectional curvature and Ricci curvature in Riemannian geometry (see \S 2 for definitions). However, the measure $\mathfrak m$ can not be uniquely determined by $F$ in Finslerian case. One of frequently used measures is the Busemann-Hausdorff measure, denoted by $\mathfrak m_{BH}$ (see \S 2.1 for definition). This means that the non-Riemannian geometric quantities related with the measure appear in Finsler geometry. One of important non-Riemannian quantities is S-curvature $\mathbf S$ introduced by Z. Shen in \cite{Sh1}, which is defined by the change rate of distorsion $\tau$ of $F$ along a geodesic, where $\tau$ is defined by
$$ \tau(x, y):=\log\left\{ \frac{\sqrt{det(g_{ij}(x, y))}}{\sigma(x)}\right\},$$ here $\sigma$ is the density function of $\mathfrak m$. For any $x\in M$, let $S_x(M)=\{y\in T_xM|F(x, y)=1\}$ be the indicatrix of $F$ at $x$ and $d\nu_x$ be the Riemannian measure on $S_xM$ induced by $F$ (cf. \S 2). Define the {\it integral of distorsion} by
\beq  %\mathcal
\mathfrak{L}_{\mathfrak{m}}(x):=\int_{S_xM}e^{-\tau(y)}d\nu_x(y), \label{tau-integ}\eeq
which is finite (\cite{HKZ}). Obviously, $\mathfrak{L}_{\mathfrak{m}}(x)$ is just the area of unit sphere $S_xM$ on a Riemannian manifold $(M, g)$. Moreover, let $p, q, \alpha\in\mathbb{R}$ and $n\in\mathbb{N}$ satisfying one of the following range conventions:
\beq\label{pqn}
	\begin{cases}(\text{I})& 1<p=q < n\text{ and }-p+1 < \alpha\leq 1;\\(\text{II}) & 1 < p < q, \ -p+1 < \alpha \ \text{ and }\ 0< q(n-p)< p(n+\alpha-1),\end{cases} \eeq   which implies that $n\geq 2$. Moreover, we set
\beqn
\mathfrak{s}_k(t):=\begin{cases}\frac{\sin(\sqrt{k}t)}{\sqrt{k}},&\mathrm{if~}k>0,\\
t,&\mathrm{if~}k=0,\\
\frac{\sinh(\sqrt{-k}t)}{\sqrt{-k}},&\mathrm{if~}k<0, \end{cases} \ \ \ \ \ \ \ \ \ \ \ \ \ \ \ \mathfrak{ct}_k(t):=\frac{\mathfrak{s}'_k(t)}{\mathfrak{s}_k(t)} \eeqn defined on $(0,\frac{\pi}{\sqrt{k}})$ for $k\in\mathbb{R}$, where we set $\frac{\pi}{\sqrt{k}}: =+\infty$ if $k\leq0$. We always denote
by $p':=\frac{p}{p-1}$ the conjugate index of $p$ throughout the paper.

For any $x\in M$ and $u\in C_0^\infty(M)\backslash\{0\}$, let $r_x:=d_F(x, \cdot)$ be the distance function from $x$ induced by $F$ and
\beqn
	U_{p, q, \alpha, k, l}^{\max}(x, u):=\frac{\left(\int_{M}\max\left\{F^{\ast p}(\pm du)\right\}d\mathfrak{m}\right)^{\frac{1}{p}} \left(\int_{M} r_x^{p'\alpha} |u|^{p'(q-1)} d\mathfrak{m}\right)^{\frac{1}{p'}}}{\left|\int_{M}\left(1+\frac{n-1}{n+\alpha-1}\zeta_{k, l}(r_x)\right)r_x^{\alpha-1}|u|^q d\mathfrak{m}\right|}, \eeqn
where  $\zeta_{k, l}(t):=t\big(\mathfrak{ct}_{k}(t)-l\big)-1$ for $k, l\in \mathbb R$ and $t\in(0,\frac{\pi}{\sqrt{k}})$. Obviously, $\zeta_{0, 0}(t)=0$. Given a smooth measure $\mathfrak m$ on $(M, F)$, denote by $m(B^+_x(r))$ the volume of forward geodesic ball $B^+_x(r)$ with radius $r$ centered at $x$ with respect to $\mathfrak m$.

\begin{thm}\label{thm12}
Let $(M, F, \mathfrak{m})$ be an $n (\geq 2)$-dimensional forward complete and noncompact Finsler measure space,  and $p, q, \alpha\in\mathbb{R}$ and $n\in\mathbb{N}$ satisfying (I) or (II) in $(\ref{pqn})$. Assume that ${\mathbf K}\leq\kappa$ and $\mathbf S\leq(n-1)h$ for some $\kappa, h\in\mathbb{R}$ satisfying either $\kappa=h=0$ or $h>0$ if $\kappa\geq 0$ and $h>\sqrt{-\kappa}>0$ if $\kappa<0$.
Then, for every $x\in M$ and $ u\in C_0^\infty(M)\backslash\{0\}$, we have
\beq 	U_{p, q, \alpha, \kappa, h}^{\max}(x,u)\geq \frac{n+\alpha-1}{q}, \label{U-max}\eeq and the constant $\frac{n+\alpha-1}{q}$ is sharp if $F$ is reversible. Further, if there is a point $o\in M$ such that
\beq \Lambda_o=\Lambda, \ \ \ \mathfrak{L}_{\mathfrak{m}}(o)=\inf_{x\in M} \mathfrak{L}_{\mathfrak{m}}(x).\label{LL}\eeq   then the following statements are equivalent.

{\rm(1)} The equality in (\ref{U-max}) holds for every $x\in M$.

{\rm(2)} The equality in (\ref{U-max}) holds at $o\in M$.

{\rm(3)} $\mathfrak m=\mathfrak m_{BH}$ and $(M, F, \mathfrak m)$ is reversible with $\mathbf K\equiv \kappa$ and  $\mathbf S\equiv (n-1)h$.

In this case,  $m(B_r^+(x))=\chi(r) \mathfrak{L}_{\mathfrak{m}}(o)$ only depending on the radius $r$ for any $x\in M$, where $\chi(r):=\int_0^r (e^{-ht}\mathfrak s_{\kappa}(t))^{n-1}dt$ and $u$ is given by \beq |u|=\left\{\begin{array}{ll} ce^{-\frac 1\beta r^{\beta}} & p, q, \alpha\ \mathrm{satisfy\  (I)}, \\
  (\beta\vartheta)^{\vartheta}(\tilde c+r^\beta)^{-\vartheta}, & p, q, \alpha\ \mathrm{satisfy\  (II)}, \end{array} \right. \label{u-func} \eeq
 where $c$,  $\beta:=\frac{p+\alpha-1}{p-1}$, $\vartheta: = \frac{p-1}{q-p}$ are positive constants and $\tilde c$ is a constant with $r^\beta+\tilde c>0$.
\end{thm}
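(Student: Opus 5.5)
The plan is to prove (\ref{U-max}) by a divergence-theorem argument: integrate by parts with the radial vector field built from $r_x$, then bound the resulting Laplacian term with the Laplacian comparison theorem for $\mathbf K\le\kappa$ and $\mathbf S\le(n-1)h$. Fix $x\in M$. Since $\mathbf K\le\kappa$, the cut locus is empty within the relevant range and $r_x$ is smooth off $x$. For $\kappa>0$ this range is $r<\pi/\sqrt\kappa$; noncompactness together with the curvature hypotheses has to be used to ensure the whole support of $u$ lies in this region, or to handle the truncation. The comparison theorem then gives, in the distributional sense,
\[
\Delta_{\mathfrak m} r_x \ \ge\ (n-1)\big(\mathfrak{ct}_\kappa(r_x)-h\big).
\]
Next I will compute the weighted divergence
\[
\mathrm{div}_{\mathfrak m}\big(r_x^{\alpha}\nabla r_x\big)=\alpha r_x^{\alpha-1}+r_x^{\alpha}\Delta_{\mathfrak m}r_x\ \ge\ (n+\alpha-1)\,r_x^{\alpha-1}\Big(1+\tfrac{n-1}{n+\alpha-1}\zeta_{\kappa,h}(r_x)\Big),
\]
using $\zeta_{\kappa,h}(t)=t(\mathfrak{ct}_\kappa(t)-h)-1$. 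The range conditions (I) or (II) make $r_x^{\alpha-1}|u|^q$ integrable near $x$, since $n+\alpha-1>0$. I will multiply by $|u|^q$ and integrate, handling the singularity at $x$ by cutting out a small ball, whose boundary term vanishes because $\alpha>1-n$ (from $\alpha>-p+1$ and $p<n$, or the corresponding condition in case (II)). This yields
\[
(n+\alpha-1)\Big|\int_M\Big(1+\tfrac{n-1}{n+\alpha-1}\zeta_{\kappa,h}\Big)r_x^{\alpha-1}|u|^q\,d\mathfrak m\Big|\le q\int_M r_x^{\alpha}|u|^{q-1}\,|\langle d|u|,\nabla r_x\rangle|\,d\mathfrak m .
\]
Then I will bound $|d|u|(\nabla r_x)|\le\max\{F^*(\pm du)\}$, using $F(\nabla r_x)=1$. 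The sign of $du$ is where nonreversibility enters, which is why the max appears. Finally, Hölder's inequality with exponents $p,p'$ gives (\ref{U-max}).

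For the equality characterization, (3)$\Rightarrow$(1) is a direct check. When $\mathbf K\equiv\kappa$, $\mathbf S\equiv(n-1)h$ and $F$ is reversible, the comparison is an equality, and the profiles (\ref{u-func}) make Hölder sharp: they satisfy $F^*(du)^p\propto r^{p'\alpha}|u|^{p'(q-1)}$ with $du$ proportional to $-dr$. This same computation, done on the model, also gives sharpness of the constant when $F$ is reversible. More generally, I would approximate with truncated radial extremals $u_\epsilon$ and show that the quotient tends to $\frac{n+\alpha-1}{q}$; the idea is to compare with the reference point structure via the volume $\chi(r)$. (1)$\Rightarrow$(2) is trivial.

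For (2)$\Rightarrow$(3), equality at $o$ forces equality in every step. Equality in Hölder forces $u$ to be radial with the ODE whose solutions are (\ref{u-func}). Equality in $F^*$ versus $\max F^*(\pm du)$ forces $\Lambda_o=1$, so $\Lambda=1$ and $F$ is reversible. Equality in the Laplacian comparison along all geodesics from $o$ gives $\mathbf K(\dot\gamma,\cdot)\equiv\kappa$ and $\mathbf S\equiv(n-1)h$ in the radial directions, and the volume $m(B^+_o(r))=\chi(r)\mathfrak L_{\mathfrak m}(o)$. To get $\mathfrak m=\mathfrak m_{BH}$, I will use the minimality of $\mathfrak L_{\mathfrak m}(o)$ in (\ref{LL}) together with the Bishop–Gromov type volume inequality at other points, which upgrades the radial rigidity to global constant curvature and identifies the measure. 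The volume formula for all $x$ then follows.

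The main obstacle is this last upgrade: passing from rigidity along geodesics issuing from $o$ to global $\mathbf K\equiv\kappa$ and $\mathfrak m=\mathfrak m_{BH}$. I would try to combine the volume comparison at an arbitrary $x$ with the $\inf$ condition in (\ref{LL}), following the argument in \cite{HKZ}.
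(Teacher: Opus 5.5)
Your route to (\ref{U-max}) is essentially the paper's. Theorem \ref{thm11}(2) is integration by parts plus a Young inequality optimized over a scaling, which amounts to your divergence identity plus H\"older, and excising a small ball replaces Propositions \ref{prop32}--\ref{prop33}. But your argument breaks exactly where the paper's proof of Proposition \ref{prop41} breaks, and at that point the statement itself is false.

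From $\mathrm{div}(r_x^{\alpha}\nabla r_x)\ge w:=(n+\alpha-1)r_x^{\alpha-1}\big(1+\tfrac{n-1}{n+\alpha-1}\zeta_{\kappa,h}(r_x)\big)$ you only get $\int_M w|u|^q\,d\mathfrak m\le q\int_M r_x^{\alpha}|u|^{q-1}|d|u|(\nabla r_x)|\,d\mathfrak m$. Putting an absolute value on the left requires $\int_M w|u|^q\,d\mathfrak m\ge 0$. The paper makes the same move when it inserts the lower bound for $\Delta r$ inside the absolute value of Theorem \ref{thm11}(2). In every admissible case with $h>0$, $w$ becomes negative for large $r_x$, and then (\ref{U-max}) fails. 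Concretely, $\mathbb R^n$ with Lebesgue measure satisfies the hypotheses with $\kappa=0$ and any $h>0$, and $\zeta_{0,h}(t)=-ht$. Take $u=\phi(|\cdot-x|/R)$ with $0\neq\phi\in C_0^\infty((1,2))$. The numerator of $U^{\max}_{p,q,\alpha,0,h}(x,u)$ is a constant times $R^{n+\alpha-1}$, while the denominator is $\asymp hR^{n+\alpha}$, so $U^{\max}\to 0$ as $R\to\infty$.

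Independently, $\mathbf K\le\kappa$ does not empty the cut locus of $x$; that needs simple connectivity and $\kappa\le 0$. So the lower bound on $\Delta r_x$ is not available distributionally on $M\setminus\{x\}$. For $\kappa>0$, noncompactness works against you, since $\operatorname{supp}u$ may lie where $r_x\ge\pi/\sqrt{\kappa}$. The cut-locus problem is fatal even for $\kappa=h=0$. On the flat cylinder $\mathbb S^1\times\mathbb R$ take $n=2$, $p=q\in(1,2)$, $\alpha=1$, and truncations of $u=\psi(z/L)$ in the axial coordinate $z$, with $\psi(t)=e^{-|t|^{p'}/p}$. Then $U^{\max}\to 1/p<2/p$ as $L\to\infty$.

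What your argument does prove, when $r_x$ is smooth on $M\setminus\{x\}$ (e.g.\ $M$ simply connected and $\kappa\le 0$), is (\ref{U-max}) for those $u$ with $\int_M w|u|^q\,d\mathfrak m\ge 0$. In particular it proves (\ref{U-max}) for all $u$ when $\kappa=h=0$.

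Relative to the paper, your sharpness argument lacks the key idea. A computation on a model says nothing about a given reversible $M$. Moreover, for $h>0$ no reversible model exists: reversibility forces $\mathbf S(x,-y)=-\mathbf S(x,y)$, which is incompatible with $\mathbf S\equiv(n-1)h\neq 0$, so (3) can only hold when $h=0$. Comparison with $\chi$ gives only a one-sided bound on $\hat\sigma_x$, and that cannot control a quotient that is homogeneous of degree zero in $\hat\sigma_x$.

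The paper instead concentrates at $x$:
\begin{enumerate}
\item it restricts to $B^+_\delta(x)$;
\item it replaces $\hat\sigma_x$ by $e^{-\tau(y)}r^{n-1}$ up to factors $1\pm\varepsilon(\delta)$, using (\ref{sigma-limit});
\item it rescales to $u(Rr)$, so that $\zeta_{\kappa,h}(s/R)\to 0$;
\item this reduces the problem to one dimension with weight $r^{n-1}$, where $e^{-r^{\beta}/p}$ and $(1+r^{\beta})^{-\vartheta}$ give exactly $\frac{n+\alpha-1}{q}$ via Gamma and Beta identities.
\end{enumerate}

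For (2)$\Rightarrow$(3) you name the right ingredients, and the paper carries them out as you suggest. Corollary \ref{cor33} turns equality in the Laplacian comparison at $o$ into $m(B^+_t(o))=\chi(t)\mathfrak L_{\mathfrak m}(o)$. Corollary \ref{cor31} then runs the chain $\mathfrak L_{\mathfrak m}(x)\le m(B^+_t(x))/\chi(t)\le m(B^+_{t+d_F(o,x)}(o))/\chi(t)=\mathfrak L_{\mathfrak m}(o)\,\chi(t+d_F(o,x))/\chi(t)\to\mathfrak L_{\mathfrak m}(o)\le\mathfrak L_{\mathfrak m}(x)$. The final limit is where the restriction on $(\kappa,h)$ is used: either $\chi(\infty)<\infty$, or $\chi(t)=t^n/n$. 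Constancy of $\mathfrak L_{\mathfrak m}$ gives $\mathfrak m=\mathfrak m_{BH}$ by Proposition \ref{prop21}. The equality case of Theorem \ref{thm21} at every $x$ then gives $\mathbf K\equiv\kappa$ and $\mathbf S\equiv(n-1)h$.
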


 Similarly, for any $x\in M$ and $u\in C_0^\infty(M)\backslash\{0\}$, let
\beqn
	U_{p, q, \alpha, k, l}^{\min}(x,u):=\frac{\left(\int_{M}\min\{F^{\ast p}(\pm du)\}d\mathfrak{m}\right)^{\frac{1}{p}} \left(\int_{M} r_x^{p'\alpha} |u|^{p'(q-1)} d\mathfrak{m}\right)^{\frac{1}{p'}}}{\left|\int_{M}(1+\frac{n-1}{n+\alpha-1}\zeta_{k, l}(r_x))r_x^{\alpha-1}|u|^q d\mathfrak{m}\right|}. \eeqn

\begin{thm}\label{thm13} Let $(M, F, \mathfrak{m})$ be an $n (\geq 2)$-dimensional forward complete Finsler measure space, and $p, q, \alpha\in\mathbb{R}$ and $n\in\mathbb{N}$ satisfying the case (I) or (II) in $(\ref{pqn})$. Assume that ${\mathbf{Ric}}\geq(n-1)\kappa$ and ${\mathbf S}\geq(n-1)h$ for some $\kappa, h\in\mathbb{R}$ satisfying either $\kappa=h=0$ or $h>0$ if $\kappa\geq 0$ and $h>\sqrt{-\kappa}>0$ if $\kappa<0$. If there is some $o\in M$ such that (\ref{LL}) holds,
% and the constants $\kappa, h$ also satisfy $h>0$ when $\kappa\geq 0$ and $h>\sqrt{-\kappa}>0$ when $\kappa<0$ in the case of $\alpha>0$,
 then the following statements are equivalent.

{\rm(1)} The inequality
\beq
	U_{p, q, \alpha, \kappa, h}^{\min}(x,u)\geq \frac{n+\alpha-1}{q},\label{U-min}\eeq
 holds for every $ u\in C_0^\infty(M)\backslash\{0\}$ and $x\in M$.

{\rm(2)} The inequality (\ref{U-min}) holds at $o\in M$ for every $ u\in C_0^\infty(M)\backslash\{0\}$.

{\rm(3)} $\mathfrak m=\mathfrak m_{BH}$ and $(M, F, \mathfrak m)$ is reversible  with $\mathbf K\equiv \kappa$ and  $\mathbf S\equiv (n-1)h$.

In this case $m(B_r^+(x))=\chi(r) \mathfrak{L}_{\mathfrak{m}}(o)$ only depending on the radius $r$ for any $x\in M$, where $\chi(r)$ is given as in Theorem \ref{thm12}.
\end{thm}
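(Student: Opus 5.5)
The plan is to prove (3)$\Rightarrow$(1), the trivial (1)$\Rightarrow$(2), and the main implication (2)$\Rightarrow$(3).

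\textbf{(3)$\Rightarrow$(1).} Assume (3). Then $F$ is reversible, so $\min\{F^{*p}(\pm du)\}=\max\{F^{*p}(\pm du)\}$. Moreover $\mathbf K\equiv\kappa$ gives $\mathbf K\le \kappa$, and $\mathbf S\equiv(n-1)h$ gives $\mathbf S\le(n-1)h$. Hence Theorem \ref{thm12} applies, and (\ref{U-max}) is exactly (\ref{U-min}) at every $x$.

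\textbf{(1)$\Rightarrow$(2).} This is immediate.

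\textbf{Preliminary volume estimates for (2)$\Rightarrow$(3).} We argue by contradiction via a test-function construction, following the Krist\'aly--Li--Zhao approach. Work in polar coordinates about $o$ and write $d\mathfrak m=\hat\sigma_o(r,y)\,dr\,d\nu_o(y)$. Under $\mathbf{Ric}\ge(n-1)\kappa$ and $\mathbf S\ge(n-1)h$, the Bishop--Gromov type comparison gives two facts:
\begin{itemize}
\item $\hat\sigma_o(r,y)\le e^{-\tau(y)}\bigl(e^{-hr}\mathfrak s_\kappa(r)\bigr)^{n-1}$;
\item $r\mapsto\hat\sigma_o(r,y)\big/\bigl(e^{-hr}\mathfrak s_\kappa(r)\bigr)^{n-1}$ is nonincreasing.
\end{itemize}
The Laplacian comparison $\Delta r\le(n-1)(\mathfrak{ct}_\kappa(r)-h)$ holds in the distributional sense. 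Consequently $m(B_o^+(r))\le\chi(r)\mathfrak L_{\mathfrak m}(o)$, and the ratio $m(B^+_o(r))/\chi(r)$ is nonincreasing. Its limit as $r\to0$ is $\mathfrak L_{\mathfrak m}(o)$, using $\mathfrak L_{\mathfrak m}(o)=\int_{S_oM}e^{-\tau}d\nu_o$.

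\textbf{(2)$\Rightarrow$(3): the test functions.} Plug into (\ref{U-min}) at $o$ the radial extremals $u_\varepsilon=f_\varepsilon(r_o)$, where $f$ is the profile from (\ref{u-func}) with parameter $c$ or $\tilde c$, truncated and smoothed as in the sharpness part of Theorem \ref{thm12}. Along radial functions $F^*(-du)$ or $F^*(du)$ equals $|f'|$, so the $\min$ picks $|f'(r)|$. All integrals then reduce to one-dimensional integrals against $\hat\sigma_o$. Choose $f$ as the solution of the Euler--Lagrange ODE $|f'|^{p-2}f'=-r^{\alpha}f^{q-1}$ (after scaling). Then, via integration by parts in $r$, the numerator and the denominator combine so that
\[
\int\left(1+\tfrac{n-1}{n+\alpha-1}\zeta_{\kappa,h}\right)r^{\alpha-1}f^q\,d\mathfrak m
\]
becomes $\frac{q}{n+\alpha-1}$ times $\int r^\alpha f^{q-1}|f'|\,d\mathfrak m$, plus a defect term. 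That defect term is an integral of $f^q r^{\alpha}$ against $\frac{d}{dr}$ of the density ratio $\hat\sigma_o/(e^{-hr}\mathfrak s_\kappa)^{n-1}$, which has a sign. The factor $1+\frac{n-1}{n+\alpha-1}\zeta$ is designed precisely so that $\frac{d}{dr}\log\bigl(r^{\alpha}(e^{-hr}\mathfrak s_\kappa)^{n-1}\bigr)=r^{-1}\bigl(n+\alpha-1+(n-1)\zeta_{\kappa,h}\bigr)$. By Hölder, the inequality (\ref{U-min}) evaluated on these functions then forces the defect to vanish. Equivalently, $\hat\sigma_o(r,y)\equiv e^{-\tau(y)}(e^{-hr}\mathfrak s_\kappa)^{n-1}$ for all $r$ and $y$, since the family over $c$ (or $\tilde c$) tests every scale. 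The conditions $h>0$ for $\kappa\ge0$, and $h>\sqrt{-\kappa}$ for $\kappa<0$, guarantee that these integrals are finite (exponential decay of the volume density) and that $\zeta$ has the needed sign.

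\textbf{Rigidity and the main obstacle.} Equality in the volume comparison along every ray yields equality in the Riccati comparison. Hence $\mathbf K(\dot\gamma,\cdot)\equiv\kappa$ and $\mathbf S\equiv(n-1)h$ along all geodesics from $o$, and $\tau$ is constant along rays. The main obstacle is extracting reversibility and $\mathfrak m=\mathfrak m_{BH}$; here (\ref{LL}) is used. The equality $m(B^+_o(r))=\chi(r)\mathfrak L_{\mathfrak m}(o)$ combined with the comparison at any other point $x$ gives $m(B_x^+(r))\le\chi(r)\mathfrak L_{\mathfrak m}(x)$. Using minimality of $\mathfrak L_{\mathfrak m}(o)$, and comparing forward and backward balls through $\Lambda_o=\Lambda$ as in \cite{HKZ}, we deduce that equality propagates to every $x$. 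This forces $\Lambda=1$. Then $\mathfrak L_{\mathfrak m}$ is constant, which with constant distortion gives $\mathfrak m=\mathfrak m_{BH}$ after the standard normalization. Equality holding at every base point then yields $\mathbf K\equiv\kappa$ on all flags. This final propagation step is where I expect most of the work to lie.
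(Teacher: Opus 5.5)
Your analytic core matches the paper's (Propositions \ref{prop42} and \ref{prop44}). The shared ingredients are the radial Euler--Lagrange test function, H\"older with equality, and an integration by parts whose sign comes from $\Delta r_o\le(n-1)(\mathfrak{ct}_\kappa(r_o)-h)$. Your ``defect'' is exactly the slack in the third inequality of (\ref{rr}). This part does force $m(B^+_r(o))=\chi(r)\mathfrak L_{\mathfrak m}(o)$, $\mathbf K(\nabla r_o,\cdot)=\kappa$ and $\mathbf S(\nabla r_o)=(n-1)h$.

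The genuine gap is reversibility. You assert that on radial functions the minimum ``picks $|f'|$'', but for decreasing $f$ one only has $\min\{F^{\ast p}(\pm du)\}\le F^{\ast p}(-du)=|f'|^p$, since $F^{\ast}(du)=|f'|F^{\ast}(-dr_o)$. When the chain collapses, equality is forced here too, i.e.\ $F^{\ast}(dr_o)\le F^{\ast}(-dr_o)$ a.e. This is exactly where the paper gets $\Lambda=1$:
\begin{itemize}
\item letting $t\to0^+$ along rays gives $F^{\ast}(o,\mathcal L(y))\le F^{\ast}(o,-\mathcal L(y))$ for all $y\in S_oM$;
\item since $\mathcal L$ maps $S_oM$ onto the unit co-sphere, homogeneity gives $F^{\ast}(o,\xi)\le F^{\ast}(o,-\xi)$ for every covector $\xi$ at $o$;
\item applying this to $-\xi$ gives $\Lambda^{\ast}_o=\Lambda_o=1$, and $\Lambda_o=\Lambda$ from (\ref{LL}) finishes.
\end{itemize}
Your substitute, comparing forward and backward ball volumes, cannot work, because volume data are blind to $\Lambda$. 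Take the Minkowski space $F(y)=|y|+\langle b,y\rangle$ on $\mathbb R^n$ with $0<|b|<1$ and $\mathfrak m=\mathfrak m_{BH}$. It has $\mathbf K\equiv0$, $\mathbf S\equiv0$ and constant $\mathfrak L_{\mathfrak m}$, it satisfies (\ref{LL}), and $m(B^{\pm}_r(x))=\chi(r)\mathfrak L_{\mathfrak m}$ for all $x,r$ with $\kappa=h=0$. Yet $\Lambda=\frac{1+|b|}{1-|b|}>1$.

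The propagation step also does not close as you describe it. Under $\mathbf{Ric}\ge(n-1)\kappa$ and $\mathbf S\ge(n-1)h$, Theorem \ref{thm22} and the inclusion $B^+_s(x)\supset B^+_{s-d_F(x,o)}(o)$ give
\[
\mathfrak L_{\mathfrak m}(x)\ge \frac{m(B^+_s(x))}{\chi(s)}\ge\mathfrak L_{\mathfrak m}(o)\,\frac{\chi(s-d_F(x,o))}{\chi(s)}\to\mathfrak L_{\mathfrak m}(o).
\]
This yields only $\mathfrak L_{\mathfrak m}(x)\ge\mathfrak L_{\mathfrak m}(o)$, which minimality of $\mathfrak L_{\mathfrak m}(o)$ already says. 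Equality needs $\mathfrak L_{\mathfrak m}(o)=\sup_x\mathfrak L_{\mathfrak m}(x)$. That is the hypothesis of Corollary \ref{cor32}, the Ricci analogue of the Corollary \ref{cor33}/\ref{cor31} step in the proof of Theorem \ref{thm12}, and it is not the $\inf$ in (\ref{LL}).

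Two smaller points: $\tau$ is not constant along rays when $h\neq0$, and once $\mathfrak L_{\mathfrak m}$ is constant, $\mathfrak m=\mathfrak m_{BH}$ follows from Proposition \ref{prop21} alone.

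Finally, your (3)$\Rightarrow$(1), like the paper's, simply quotes Theorem \ref{thm12}. That theorem assumes noncompactness, and its proof uses the lower bound on $\Delta r_x$ on all of $M$; (3) alone does not supply this. On a flat torus, (3) and (\ref{LL}) hold, yet $u\equiv1$ gives $U^{\min}_{p,q,\alpha,0,0}(x,1)=0$.
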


 If $F=g$ is Riemannian and $\mathfrak m$ is the Riemannian measure induced by $g$, then $\mathbf S=0$ and $\mathfrak{L}_{\mathfrak{m}}(x)$ is the area of unit sphere $S_xM$ as mentioned before. In this case, (\ref{LL}) is trivial and Theorems \ref{thm12}--\ref{thm13} are reduced to the corresponding ones in Riemannian case. Moreover, $m(B_r^+(x))=\chi(r) \mathfrak{L}_{\mathfrak{m}}(o)=V_\kappa(r)$ (the volume of geodesic ball of radius $r$ in the Riemannian space form $\mathbf M_\kappa^n$), which implies that $(M, g)$ isometric of $\mathbf M_\kappa^n$ (cf. Theorems 5.1--5.2, \cite{KKPZ}). It should be noted that Theorem 5.2 in \cite{KKPZ} still holds for $-p+1<\alpha\leq 1<p<n$ instead of $0<\alpha\leq 1<p<n$ when $\kappa<0$. In Finslerian case, $m(B_r^+(x))=\chi(r) \mathfrak{L}_{\mathfrak{m}}(o)$  means that the volumes of forward geodesic balls with same radii $r$ at any $x\in M$ with respect to $\mathfrak m$ are constants $\chi(r) \mathfrak{L}_{\mathfrak{m}}(o)$, equivalently,  $\mathbf K\equiv \kappa$ and  $\mathbf S\equiv (n-1)h$ (see Corollary  \ref{cor33} below). The classical example is the Funk metric $\mathcal F$ defined on a bounded strongly convex domain $\Omega\subset \mathbb R^n$, which is forward complete. Moreover, $\mathcal F$ has constant flag curvature $\mathbf K=\kappa=-1/4$ and constant S-curvature $\mathbf S=(n+1)/2$ with respect to the Busemann-Hausdorff measure $\mathfrak m_{BH}$ (\cite{Sh}, \cite{KLZ}). Obviously ${\mathbf{Ric}}=-(n-1)/4$ and $h=(n+1)/2(n-1)$. Thus $(M, \mathcal F, \mathfrak m_{BH})$ satisfies the curvature assumptions in Theorems \ref{thm12}--\ref{thm13}. It is a question to study the topological or metric (measure) structure of $(M, F, \mathfrak m)$ with $\mathbf K\equiv \kappa$ and  $\mathbf S\equiv (n-1)h$, which will be studied later.

 When $\kappa=h=0$ and $p=2$ in Theorems \ref{thm12}--\ref{thm13}, we have $p'=2$ and $\zeta_{\kappa, h}(r)=0$. If we choose $\alpha=-\mathfrak q+1$ for $0<\mathfrak q < 2=p\leq q$, then
\beqn p, q, \alpha \ {\rm satisfy}\left\{ \begin{array}{ll} {\rm (I)}\,  &{\rm iff}\ 0<\mathfrak q< 2 = p=  q<n, \\
 {\rm (II)}  & {\rm iff}\ 0<\mathfrak q< 2=p< q \ \ {\rm{and}}\ \  2< n<\frac{2(q-\mathfrak q)}{q-2}, \end{array}\right.\eeqn  where the second case (II) is just  (II) of (1.6) in \cite{HKZ}, in which $p$ and $q$ are replaced by $q$ and $\mathfrak q$ respectively.  Thus (\ref{U-max}) and (\ref{U-min}) are reduced to
 \beq U_{2, q, -\mathfrak q+1, 0, 0}^{\max}(x,u)\geq \frac{n-\mathfrak q}{q}, \ \ \ \ U_{2, q, -\mathfrak q+1, 0, 0}^{\min}(x,u)\geq \frac{n-\mathfrak q}{q},\label{UU}\eeq which are exactly the inequalities in Theorems 1.1 and 1.3 in \cite{HKZ}. Consequently,  Theorems \ref{thm12}--\ref{thm13} are generalizations of Theorems 1.1 and 1.3 in \cite{HKZ}.
When  $\mathfrak q\rightarrow 0$ (i.e., $\alpha\rightarrow 1$),  (\ref{UU})$_1$ becomes
\beq U_{2, q, 1, 0, 0}^{\max}(x,u)\geq \frac{n}{q}. \label{UU-1} \eeq
(\ref{UU}) and (\ref{UU-1}) are respectively the Finslerian versions of the classical sharp Caffarelli-Kohn-Nirenberg inequality and Heisenberg-Paul-Weyl uncertainty principle in $\mathbb R^n$ (\cite{CKN}, \cite{We}), which were first proved in \cite{HKZ} in a different way.

As $q\rightarrow p$ and $\alpha\rightarrow -p+1$ in (\ref{U-max}), we get the $L^p$-Hardy inequality (\ref{H-Ineq}) below. In fact,  we obtain (\ref{H-Ineq}) under more weaker curvature conditions (also see Proposition \ref{prop45}).
\begin{thm}  \label{thm14}
Let $(M,F,\mathfrak{m})$ be an $n (\geq 2)$-dimensional forward complete and noncompact Finsler metric measure space. Assume that $\mathbf K\leq\kappa$ and $\mathbf S\leq(n-1)h$, where $\kappa, h\in\mathbb{R}$. Then, for any $1<p<n$, $x\in M$ and $u\in C_0^\infty(M)\backslash\{0\}$, we have
\beq\label{H-Ineq}
	U_{p,p,1-p, \kappa, h}^{\max}(x,u)\geq \frac{n-p}{p}.
\eeq
 Moreover, if $F$ is reversible, then the constant $\frac{n-p}{p}$ is sharp but never achieved.
\end{thm}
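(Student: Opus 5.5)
The plan is to argue by integration by parts against the radial vector field $r_x^{1-p}\nabla r_x$, using a Laplacian comparison theorem, and then apply H\"older's inequality. With $q=p$ and $\alpha=1-p$ we have $p'\alpha=-p$ and $p'(q-1)=p$, so the second factor in the numerator of $U^{\max}_{p,p,1-p,\kappa,h}(x,u)$ is $\left(\int_M r_x^{-p}|u|^p\,d\mathfrak m\right)^{1/p'}$. The denominator is
$$\Big|\int_M\Big(1+\tfrac{n-1}{n-p}\zeta_{\kappa,h}(r_x)\Big)r_x^{-p}|u|^p\,d\mathfrak m\Big|.$$
The inequality (\ref{H-Ineq}) therefore reduces to
$$(n-p)\Big|\int_M\Big(1+\tfrac{n-1}{n-p}\zeta_{\kappa,h}(r_x)\Big)r_x^{-p}|u|^p\,d\mathfrak m\Big|\le p\Big(\int_M\max\{F^{*p}(\pm du)\}d\mathfrak m\Big)^{1/p}\Big(\int_M r_x^{-p}|u|^p\,d\mathfrak m\Big)^{1/p'}.$$

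\textbf{Step 1 (Laplacian comparison).} Under $\mathbf K\le\kappa$ and $\mathbf S\le(n-1)h$, the comparison theorem for the $\mathfrak m$-Laplacian of the distance function (the Finslerian Rauch/Hessian comparison combined with the definition of $\mathbf S$ as the derivative of the distortion) gives, off the cut locus and away from $x$,
$$\Delta r_x\ge (n-1)\big(\mathfrak{ct}_\kappa(r_x)-h\big).$$
Consequently
$$\mathrm{div}_{\mathfrak m}\big(r_x^{1-p}\nabla r_x\big)=(1-p)r_x^{-p}+r_x^{1-p}\Delta r_x\ge r_x^{-p}\big[(n-p)+(n-1)\zeta_{\kappa,h}(r_x)\big]=(n-p)\Big(1+\tfrac{n-1}{n-p}\zeta_{\kappa,h}(r_x)\Big)r_x^{-p}.$$

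\textbf{Step 2 (integration by parts and H\"older).} I multiply the divergence inequality by $|u|^p$ and integrate. Integrating by parts gives
$$\int_M |u|^p\,\mathrm{div}_{\mathfrak m}\big(r_x^{1-p}\nabla r_x\big)\,d\mathfrak m=-p\int_M |u|^{p-2}u\,r_x^{1-p}\,du(\nabla r_x)\,d\mathfrak m.$$
Since $F(\nabla r_x)=1$, we have $|du(\nabla r_x)|\le\max\{F^*(\pm du)\}$. H\"older's inequality with exponents $p$ and $p'$, splitting $r_x^{1-p}|u|^{p-1}=(r_x^{-1}|u|)^{p-1}$, then yields the displayed inequality. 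If the integrand of the denominator changes sign, I handle this by comparing the signed integral with its absolute value; the lower bound from Step 1 controls the signed integral, which is all that is needed.

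\textbf{Step 3 (justification of the integration by parts).} This is the main obstacle.
\begin{itemize}
\item At $x$: the field $r_x^{1-p}\nabla r_x$ is singular, but the boundary term over a small forward sphere is $O(\varepsilon^{1-p}\varepsilon^{n-1})=O(\varepsilon^{n-p})\to0$ because $p<n$. This uses $m(\partial B^+_x(\varepsilon))\sim\mathfrak L_{\mathfrak m}(x)\varepsilon^{n-1}$.
\item At the cut locus: the cut locus has measure zero. I would exhaust the star-shaped domain $\mathcal D_x=M\setminus(\mathrm{Cut}_x\cup\{x\})$ by smooth domains and verify that the resulting boundary contribution has the favourable sign. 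This holds because, at cut points, the outward normal flux of $\nabla r_x$ through the boundary of $\mathcal D_x$ is nonnegative. Equivalently, the singular part of the distributional divergence has a definite sign that only strengthens the inequality in the direction needed.
\end{itemize}
If that sign argument fails, I would instead invoke the distributional comparison recorded earlier in the paper (Proposition \ref{prop45}). Note that noncompactness and forward completeness are used here so that $u\in C_0^\infty(M)$ produces no boundary terms at infinity.

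\textbf{Step 4 (sharpness and non-attainment for reversible $F$).} Since $\zeta_{\kappa,h}(t)=-ht+O(t^2)\to0$ as $t\to0$, the weight in the denominator tends to $1$ near $x$, and $F$ is almost Euclidean in small balls. I take radial test functions
$$u_\varepsilon=\min\{\delta^{-(n-p)/p+\varepsilon},\,r_x^{-(n-p)/p+\varepsilon}\}\,\phi(r_x),$$
where $\phi$ is a cutoff supported in a small ball, and let $\varepsilon\to0$ with the other parameters tuned appropriately. Reversibility gives $\max\{F^{*}(\pm du)\}=F^*(du)=|\partial_r u|$ for radial $u$. Polar coordinates, with the volume density comparable to $\mathfrak L_{\mathfrak m}(x)t^{n-1}$, then show that the ratio tends to $\frac{n-p}{p}$. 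For non-attainment: equality in the H\"older step forces $|u|=c\,r_x^{-(n-p)/p}$ on the support, which is impossible for a nonzero $u\in C_0^\infty(M)$. Hence the constant $\frac{n-p}{p}$ is never achieved.
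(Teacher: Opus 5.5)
Your route is essentially the paper's: Theorem~\ref{thm11}(2) with $G(s)=s^{1-p}$, $H(t)=|t|^p/p$ is exactly integration by parts against $r_x^{1-p}\nabla r_x$ followed by H\"older, combined with Theorem~\ref{thm21}. However, both obstacles you flag are resolved with the wrong sign. Neither can be repaired as stated, because the inequality actually fails in those regimes. The paper's own proof has the same two gaps: it applies Theorem~\ref{thm21}, which is proved only for $r\le\min\{\mathfrak i_o,\pi/\sqrt\kappa\}$, on all of $M\setminus\{o\}$, and it inserts a one-sided bound inside an absolute value.

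\emph{Cut locus.} Since $\nabla r_x$ has nonnegative outward flux through $\mathcal C_x$, the distributional divergence of $V=r_x^{1-p}\nabla r_x$ equals its pointwise divergence on $\mathcal D_x$ \emph{minus} a nonnegative measure carried by $\mathcal C_x$. Tested against $|u|^p\ge0$, this term lowers $\int|u|^p\,\mathrm{div}V$, so it works against the lower bound you need. This is why distributional Laplacian comparison is global for the upper bound under $\mathbf{Ric}\ge(n-1)\kappa$, but not for the lower bound under $\mathbf K\le\kappa$. Without a hypothesis forcing $\mathcal C_x=\emptyset$ (e.g.\ $M$ simply connected and $\kappa\le0$), the claim is false. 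Take flat $\mathbb R^2\times S^1$ with $n=3$, $p=2$, $\kappa=h=0$, so that $\zeta_{\kappa,h}\equiv0$. Let $u(z,\theta)=\psi(L^{-1}\log|z|)$ with $\psi\in C_0^\infty((1,2))$, $\psi\not\equiv0$. Since $r_x/|z|\to1$ on the support,
\[
U^{\max}_{2,2,-1,0,0}(x,u)^2=\frac{\int_M|\nabla u|^2\,d\mathfrak m}{\int_M r_x^{-2}u^2\,d\mathfrak m}=O(L^{-2}),
\]
which is eventually below $\frac14$. Falling back on Proposition~\ref{prop45} is circular, since that proposition is the statement itself at a fixed base point.

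\emph{The absolute value.} Put
\[
I=-p\int_M|u|^{p-2}u\,r_x^{1-p}\,du(\nabla r_x)\,d\mathfrak m,\qquad Y=\int_M\big(n-p+(n-1)\zeta_{\kappa,h}(r_x)\big)r_x^{-p}|u|^p\,d\mathfrak m,
\]
and let $N$ be the numerator of $U^{\max}_{p,p,1-p,\kappa,h}(x,u)$.
\begin{itemize}
\item Comparison gives $I\ge Y$, and H\"older gives $|I|\le pN$. Together these yield only $pN\ge\max\{Y,0\}$, whereas the theorem asserts $pN\ge|Y|$.
\item A lower bound on the signed integral says nothing about $|Y|$ when $Y<0$. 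For $h>0$ (or $\kappa>0$) the weight $1+\frac{n-1}{n-p}\zeta_{\kappa,h}$ is negative at large $r_x$, so $Y<0$ does occur.
\item Concretely, take $\mathbb R^3$ with $p=2$, $\kappa=0$, $h=1$; then $\mathbf S=0\le 2h$ and the weight is $1-2r$. A radial $u$ equal to $1$ on $\{R\le r\le 2R\}$, with unit-width linear ramps, gives $N\approx 4\sqrt5\,\pi R^{3/2}$. Meanwhile $\frac12\big|\int(1-2r)r^{-2}u^2\big|\approx 6\pi R^2$, so the inequality fails for large $R$.
\end{itemize}
What your argument actually proves is the inequality with $\max\{Y,0\}$ in place of $|Y|$, or the stated one when the weight is nonnegative (e.g.\ $\kappa\le0$ and $h\le0$). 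Note also that for $\kappa>0$, $\zeta_{\kappa,h}(r_x)$ is not even defined once $r_x\ge\pi/\sqrt\kappa$.

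\emph{What is fine.} Your treatment of the singularity at $x$ (flux $O(\varepsilon^{n-p})$, in place of the paper's capacity argument) is correct. So are the sharpness and non-attainment arguments for reversible $F$, since they only involve small balls around $x$, where the weight is close to $1$ and there is no cut locus. One caveat on your test functions: with the cap radius $\delta$ fixed, letting only $\varepsilon\to0$ produces no divergence. You must also send $\delta\to0$ (or drop the cap), as the paper does with $\max\{\varepsilon,r\}^{-(n-p)/p}$.
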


\begin{rem} {\rm When $p=2$ and $\kappa=h=0$, Theorem \ref{thm14} is reduced to Theorem 1.4 in \cite{HKZ}. If $\kappa=-k^2\leq 0$  in Theorem \ref{thm12}, then ${\mathbf {Ric}}\leq -(n-1)k^2$. In this case, Theorems \ref{thm12} and \ref{thm14} show that the $L^p$-uncertainty principles are true for any $1<p<n$. This gives an answer for validity of $L^p$-uncertainty principles under the curvatures bounded from above mentioned in the introduction. If ${\mathbf{Ric}}\geq(n-1)\kappa=-(n-1)k^2$ and ${\mathbf S}\geq(n-1)h$ for $k, h\in\mathbb{R}$ satisfying $h>k\geq 0$, then Theorem \ref{thm13} shows that the $L^p$-uncertainty principles are still true although some $L^2$-uncertainty principles fail in Theorem 1.1 of \cite{KLZ}.  It is worth mentioning that $\zeta_{k, h}\neq 0$ when $h>k\geq 0$. The inequality (\ref{U-min}) when $p=2$ is different from the ones considered in \cite{KLZ}. In this sense, Theorem \ref{thm13} and Theorem 1.1 in \cite{KLZ} hold independently.}
\end{rem}

It is surprising that all inequalities in Theorems \ref{thm12}-\ref{thm14} are from a generic functional inequality in Theorem \ref{thm11} in Section 4 and applications of some comparison theorems in Section 3. In this spirit, we hope the interested readers to build further functional inequalities through Theorems \ref{thm11}. As examples, we list $L^p$-Caccioppoli inequality and $L^p$-spectral gap estimate on Finsler measure spaces in Appendix.

\section{Preliminaries}

In this section, we recall some definitions and notations in Finsler geometry. See \cite{BCS}, \cite{Sh} or \cite{Xia} for more details.
\subsection{ Finsler metrics and measures}

Let $M$ be be an $n$-dimensional differential manifold and $TM$ (resp. $T^{\ast}M$) be the tangent (resp. cotangent) bundle on $M$. If there is a continuous function $F: TM\rightarrow [0, \infty)$ satisfying (1) $F\in C^\infty(TM\backslash\{0\})$, (2) $F(x, \lambda y)=\lambda F(x, y)$ on $TM$ for any $\lambda>0$, (3) the matrix $(g_{ij})=(\frac 12 [F^2]_{y^iy^j})$ is positive definite, then $F$ is called a {\it Finsler metric} on $M$ and the pair $(M, F)$ is called a {\it Finsler manifold}. The tensor $g:=g_{ij}(x, y)dx^i\otimes dx^j$ is called the {\it Fundamental tensor} of $F$. Given a smooth measure $\mathfrak{m}$, the triple $(M, F, \mathfrak{m})$ is called a {\it Finsler metric measure space} (simply, {\it Finsler measure space}). In particular, if  $F$ satisfies $F(x, y)=F(x, -y)$ for any $(x, y)\in TM$, then $F$ is said to  be {\it reversible}. Otherwise, we say that $F$ is {\it nonreversible}. In this case, we define the {\it reversibility} $\Lambda$ of $F$ as in the introduction.

The {\it dual metric} $F^{\ast}: T^{\ast}M\rightarrow \mathbb R$ of $F$ is defined by
$$F^{\ast}(x, \xi):=\sup\limits_{(x, y)\in TM\backslash\{0\}}\frac{\xi(y)}{F(x,y)},$$
which is also a Finsler metric on $M$. Similarly, we may define the fundamental tensor $g^{\ast}=(g^{\ast ij}(x, \xi))$ of $F^{\ast}$, where $g^{{\ast}ij}=\frac 12[F^{{\ast}2}]_{\xi^i\xi^j}$ and the reversibility $\Lambda^{\ast}$ of $F^{\ast}$ by
$$\Lambda^{\ast}: =\sup_{x\in M}\Lambda^{\ast}_x, \ \ \ \ \ {\rm where}\ \ \Lambda^{\ast}_x=\sup_{\xi\in T^{\ast}_x M\backslash\{0\}} \frac{F^{\ast}(x, -\xi)}{F^{\ast}(x, \xi)}.$$ Then $\Lambda_x=\Lambda^{\ast}_x$ for any $x\in M$ and hence $\Lambda=\Lambda^{\ast}$ (\cite{ZX}, \cite{HKZ}, \cite{Xia}).

Given a smooth vector field $V$ on $M$, we define the {\it weighted Riemannian metric} $g_V$ on $(M, F)$ by $g_V=g_{ij}(x, V)dx^i\otimes dx^j$, equivalently,
\beq g_V(X,Y)=g_{ij}(x, V)X^iY^j, \label{gV}\eeq for any vector fields $X, Y$ on $M$. Thus, we have $F^2(V)=g_V(V, V)$.

For $x_1, x_2\in M$, the {\it distance} $d_F$ from $x_1$ to $x_2$ induced by $F$ is defined by
\beqn d_F(x_1, x_2):=\inf_{c}\int_0^1F(\dot{c}(t))dt, \eeqn where the infimum is taken over all $C^1$ curves $c: [0,1]\rightarrow M$ such that $c(0)=x_1$ and $c(1)=x_2$. In general, $d_F(x_1, x_2)\neq d_F(x_2, x_1)$  unless $F$ is reversible. If $\Lambda<\infty$, then $d_F(x_1, x_2)$ and $d_F(x_2, x_1)$ are comparable. Let
\beqn B^+_R(x):=\{z\in M\mid d_F(x,z)<R\}, \ \ \  B^-_R(x):=\{z\in M\mid d_F(z, x)<R\}. \eeqn They are respectively called the {\it forward geodesic ball} and {\it backward geodesic ball} on $(M, F)$. If $F$ is reversible, these two geodesic balls coincide.

A $C^1$ curve $\gamma: [0, \ell]\rightarrow M$ is called a {\it geodesic} if it has constant speed (i.e.,
$F(\gamma, \dot \gamma)$ is constant) and if it is locally minimizing. A geodesic is said to be {\it normal} if $F(\dot\gamma)=1$.
 % It can be characterized by $\ddot{c}^i(t)+2G^i(c(t), \dot c(t))=0$, here  $G^i=G^i(x,y)$ are the geodesic coefficients given by
%\beq G^i=\frac{1}{4}g^{ij}\left\{[F^2]_{x^ky^j}y^k-[F^2]_{x^j}\right\}.\label{Gi}\eeq
Such a geodesic is in fact a $C^\infty$ curve. For any $x\in M$ and $y\in T_xM$, there is a unique minimal geodesic $\gamma: (-\epsilon, \epsilon)\rightarrow M$ with $\gamma(0)=x$ and $\dot\gamma(0)=y$ for a small $\epsilon>0$. For any $y\in S_{x}M:=\{y\in T_{x}M|F(x,y)=1\}$,
the {\it cut value} $i_{y}$ of $y$ is defined by
$$i_{y}:=\sup\{r\in \mathbb R^+|\text{the segment} \ \gamma_{y}|_{[0,r]} \ \text{is globally minimizing}\}.$$ For any $x\in M$, let
$$\mathfrak{i}_{x}:=\inf_{y\in S_{x}M}i_{y}, \ \ \ \ \ \ \mathfrak{i}_{M}:=\inf_{x\in M}\mathfrak i_{x}.$$ We call $\mathfrak{i}_{x}$ and $\mathfrak{i}_{M}$
the {\it injectivity radius} at $x$  and the {\it injectivity radius} of $M$ respectively. The {\it cut locus} of $x$ is given by
$$\mathcal C_{x}:=\{\gamma_{y}(i_{y})|\text{$y\in S_{x}M$ \ \text{with} \ $i_{y}<\infty$}\},$$
which is closed and has null measure. A Finsler manifold $(M, F)$ is said to be {\it forward complete} (resp. {\it backward complete}) if each geodesic defined on $[0, l)$ (resp. $(-l, 0]$) can be extended to a geodesic defined on $[0, \infty)$ (resp. $(-\infty, 0]$). These two kind of completeness are equivalent when $\Lambda<\infty$. In this case, we say that $(M, F)$ is {\it complete}.

 In general, the measure on a Finsler manifold $(M, F)$ can not be uniquely determined by the Finsler metric $F$. One of the frequently used measures on $(M, F)$ is  the {\it Busemann-Hausdorff measure} $m_{BH}$, whose volume form is defined by $$d\mathfrak m_{BH}=\frac {\mbox{Vol}(\mathbb B^n(1))}{\mbox{Vol}\{(y^i)\in \mathbb R^n|F(y)<1\}}dx, $$ where $\mathbb B^n(1)$ is the Euclidean unit sphere and Vol$(\cdot)$  denotes the Euclidean volume. It is reduced to the Riemannian measure if $F$ is  Riemannian.
Fixed a point $o\in M$,  the distance function $r(\cdot):=d_F(o, \cdot)$ is smooth on $M\backslash\{\mathcal C_o\cup \{o\})$. Thus there is a polar coordinate system $(r, y)$ around $o$ such that $r:= d_F(o, \cdot)<\mathfrak i_o$  and $y\in S_oM$. In the polar coordinate $(r, y)$,  rewrite $d\mathfrak{m}= \sigma_o(r, y)dr\wedge d\Theta$, where $d\Theta$ is the standard volume form on $S_o(M)$. Note that each Finsler metric $F$ induces a Riemannian metric $g_x=g_{ij}(x, y)dy^i\otimes dy^j$  on $T_xM\backslash\{0\}$ for any $x\in M$. Let $\dot g_o=(\dot{g}_{ij}(o, y))$ and $d\nu_o$ be respectively the Riemannian metric and the Riemannian volume form on $S_o(M)$ induced by $g_o$.  Then
\beqn d\nu_o(y)=\sqrt{\det(g_{ij}(o, y))}\left(\sum\limits_{i=1}^n(-1)^{i-1}y^idy^1\wedge\cdots\wedge \hat{dy^i} \wedge\cdots\wedge dy^n\right).\eeqn
Thus  $d\mathfrak m$ is reexpressed by
\beq\label{dm-polar}
	d\mathfrak{m} = \hat{\sigma}_o(r, y) dr \wedge d\nu_o(y), \eeq
where $\hat{\sigma}_o(r, y)=\frac{\sigma_o(r, y)}{\sqrt{\det \dot g_o(y)}}$ with
\beq
	\lim_{r \to 0^+} \frac{\hat{\sigma}_o(r, y)}{r^{n-1}} = e^{-\tau(y)} \label{sigma-limit}\eeq
(Lemma 3.1, \cite{ZS}). With this, we define the {\it integral of distortion} by (\ref{tau-integ}), which is finite.
 We say that two smooth measures $\mathfrak m_1$ and $\mathfrak m_2$ on a Finsler manifold are {\it equal} or {\it equivalent}, denoted by  $\mathfrak m_1=\mathfrak m_2$, if $\mathfrak m_1$ and $\mathfrak m_2$ are equal up to a positive constant $c$.
 \begin{prop} [\cite{HKZ}] \label{prop21} Let $\mathfrak m$ be a smooth measure on a Finsler manifold $(M, F)$. Then $\mathfrak m=\mathfrak m_{BH}$ if and only if $ \mathfrak{L}_{\mathfrak{m}}$ is constant on $M$. \end{prop}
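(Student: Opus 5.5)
The statement to prove is Proposition \ref{prop21}: $\mathfrak m=\mathfrak m_{BH}$ (up to a positive constant) if and only if $\mathfrak L_{\mathfrak m}$ is constant on $M$. The plan is to compute $\mathfrak L_{\mathfrak m}(x)$ explicitly in terms of the density $\sigma$ of $\mathfrak m$. Since both directions then become immediate, I would first show that for any smooth measure $d\mathfrak m=\sigma(x)dx$,
\beqn \mathfrak L_{\mathfrak m}(x)=\int_{S_xM}\frac{\sigma(x)}{\sqrt{\det g_{ij}(x,y)}}\,d\nu_x(y)=\sigma(x)\int_{S_xM}\omega_x, \eeqn
where $\omega_x:=\sum_{i}(-1)^{i-1}y^i\,dy^1\wedge\cdots\wedge\widehat{dy^i}\wedge\cdots\wedge dy^n$ restricted to $S_xM$. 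This uses only the definitions: $e^{-\tau}=\sigma/\sqrt{\det g}$, and the formula for $d\nu_x$ recorded in \S2, in which the factor $\sqrt{\det g_{ij}(x,y)}$ cancels.

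The key step is to identify $\int_{S_xM}\omega_x$ with a Euclidean volume. The form $\omega_x$ is $\iota_E(dy^1\wedge\cdots\wedge dy^n)$ with $E=y^i\partial_{y^i}$ the Euler field, and $d\omega_x=n\,dy^1\wedge\cdots\wedge dy^n$. By Stokes' theorem on the strongly convex body $B_x=\{y\in T_xM : F(x,y)<1\}$, whose boundary is $S_xM$,
\beqn \int_{S_xM}\omega_x = n\,\mathrm{Vol}(B_x). \eeqn
Here I would check orientation: $E$ points outward on $S_xM$, so the boundary orientation agrees with the one used for $d\nu_x$, which is positive.

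Hence $\mathfrak L_{\mathfrak m}(x)=n\,\sigma(x)\,\mathrm{Vol}\{y : F(x,y)<1\}$. On the other hand, the density of $\mathfrak m_{BH}$ is $\sigma_{BH}(x)=\mathrm{Vol}(\mathbb B^n(1))/\mathrm{Vol}(B_x)$. Combining the two,
\beqn \mathfrak L_{\mathfrak m}(x)=n\,\mathrm{Vol}(\mathbb B^n(1))\,\frac{\sigma(x)}{\sigma_{BH}(x)}. \eeqn
The ratio $\sigma/\sigma_{BH}$ is a coordinate-independent function, since both densities transform by the same Jacobian. Therefore $\mathfrak L_{\mathfrak m}$ is constant if and only if $\sigma=c\,\sigma_{BH}$ for a positive constant $c$, which is exactly $\mathfrak m=\mathfrak m_{BH}$ in the sense defined above.

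The only delicate point is the Stokes/orientation computation together with the consistency of conventions for $d\nu_x$. I expect this to be routine, because $\omega_x$ is the standard solid-angle-type form and $B_x$ is star-shaped with respect to the origin.
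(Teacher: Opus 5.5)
The paper gives no proof of this proposition; it imports it from \cite{HKZ}, so there is no internal argument to compare yours with. Your proof is correct and self-contained.

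The individual steps hold as follows.
\begin{itemize}
\item The cancellation $e^{-\tau(y)}\,d\nu_x(y)=\sigma(x)\,\omega_x$ is exactly what the paper's formula for $d\nu_x$ gives. It is also consistent with $d\nu_x$ being the $g_x$-Riemannian area of $S_xM$, because $g_{ij}(x,y)y^iy^j=1$ and $g_{ij}(x,y)y^iv^j=\frac12 v^j\partial_{y^j}F^2=0$ for $v$ tangent to $S_xM$. So $y$ is the unit normal there.
\item Stokes' theorem applies because $\omega_x$ is a polynomial form on all of $T_xM$, and $\{F(x,\cdot)\le 1\}$ is a compact domain with smooth boundary, since $dF(E)=F=1$ on $S_xM$. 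The Euler field $E$ is outward, so the orientation is the right one.
\item The ratio $\sigma/\sigma_{BH}$ is chart-independent. The Lebesgue measure in the linear coordinates $y^i$ used for $\mathrm{Vol}\{F(x,\cdot)<1\}$ is the same one used to define $\sigma_{BH}$ in that chart. Here $\mathbb B^n(1)$ is read as the unit ball, as you do.
\end{itemize}
Since the resulting equivalence is pointwise, no connectedness of $M$ is needed.

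Compared with a bare citation, your argument gives the explicit identity
\[
\mathfrak L_{\mathfrak m}(x)=n\,\mathrm{Vol}(\mathbb B^n(1))\,\frac{\sigma(x)}{\sigma_{BH}(x)}=\mathrm{Vol}(\mathbb S^{n-1})\,\frac{d\mathfrak m}{d\mathfrak m_{BH}}(x).
\]
This identifies the constant as $c\,\mathrm{Vol}(\mathbb S^{n-1})$ when $\mathfrak m=c\,\mathfrak m_{BH}$. It also recovers the paper's remark that $\mathfrak L_{\mathfrak m}$ is the area of the unit sphere in the Riemannian case.

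If you want to avoid Stokes, pull back $dy^1\wedge\cdots\wedge dy^n$ under $(t,\theta)\mapsto t\theta$ to get $t^{n-1}\,dt\wedge\omega_x$. This gives the same identity and is the Minkowski-space analogue of the polar decomposition (\ref{dm-polar}) that the paper uses.
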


\subsection{Connection and curvatures}

 Let $\pi: TM\backslash\{0\}\rightarrow M$ be the projective map. The pull-back $\pi^{\ast}TM$ admits a unique linear connection, which is called the {\it Chern connection}. Given a vector $V=V^i\frac {\partial}{\partial x^i}\in T_x(M)\backslash\{0\}$, the Chern connection $D$ is determined by the following equations
\beq & &\ \ \ \ \ \ \ \ \ \ \ \ \ \ \ D^V_XY-D^V_YX=[X, Y], \label{torsionfree}\\
& &Zg_V(X, Y)=g_V(D_Z^VX,Y)+g_V(X, D_Z^VY)+C_V(D_Z^VV, X,Y), \label{compatibility}\eeq  where  $X, Y, Z\in \Gamma(TM)$ (i.e., the set of smooth sections on $TM$), $g_V$ is defined by (\ref{gV}) and
\beqn C_V(X, Y, Z):=C_{ijk}(V)X^iY^jZ^k=\frac 14\frac{\partial^3F^2(x, V)}{\partial V^i\partial V^j\partial V^K}X^iY^jZ^k \eeqn
is the {\it Cartan tensor} of $F$.  $D^V_XY$ is the {\it covariant derivative} with respect to  the reference vector $V$. Note that $C_V(V, X,Y)=0$ from the homogeneity of $F$.  In terms of the Chern connection, a geodesic $\gamma$ satisfies $D_{\dot{\gamma}}^{\dot{\gamma}}{\dot{\gamma}}=0$.

 For any  nonzero smooth vector field $V$, let $\Pi:={\rm{span}}\{V, W\}$ be a tangent plane (called the {\it flag}) in $T_xM$ , the {\it flag curvature} of $F$ with the flag pole $V$ is defined by
\beqn\mathbf{K}(x, \Pi,  V)=\frac{g_V(R^V(V, W)W, V)}{g_V(V,V)g_V(W, W)-g_V(V, W)^2},\eeqn where $R^V$ is the {\it Riemannian curvature tensor} defined by
\beqn R^V(X, Y)Z:=D^V_XD^V_Y Z-D^V_Y D^V_X Z-D^V_{[X, Y]}Z.\eeqn $R^V$ (hence $\mathbf K(V, \cdot)$) is independent of the choice of connection $D$ and uniquely determined by the Finsler metric $F$ (\cite{Sh}, \cite{Xia}).
We further define the Riemannian curvature tensor $R_V$ by
$$ R_V(U):=R^V(V, U)U, \ \ \ \ {\rm{for\ any}}\ U\in \Gamma(TM). $$ It is easy to see that $R_V(V)=0$. $F$ is said to be of {\it scalar flag curvature} (resp. {\it constant flag curvature}) if $\mathbf K=\mathbf K(x, V)$ is only a function of $(x, V)$ (resp. $\mathbf K$ is a constant). It is easy to check that $F$ is said to be of {\it scalar flag curvature} $\mathbf K=K(x, V)$ if and only if \beq R_V(U)=\mathbf K\big(g_V(V, V)U-g_V(V, U)V\big), \label{flag-curv}\eeq for any $U\in \Gamma(TM)$. In particular,  $\mathbf K(x, V)=c$ (constant) in the direction $V$ if and only if $R_V(U)=c U$ for any vector field $U$ with $g_V(V, V)=1$ and $g_V(V, U)=0$.

Let $\gamma: [0, \ell]\rightarrow M$ be a unit speed geodesic and $T(t):=\dot\gamma(t)$. A vector field $J$ along $\gamma$ is called a {\it Jacobi field} if it satisfies the following equation
$$D^T_TD_T^TJ+R^T(T, J)T=0.$$
For any $C^1$ vector fields $X$ and $Y$ along $\gamma$ , the {\it index form} $I_\gamma: =I_\gamma (X, Y)$ is defined by
\beq I_\gamma(X, Y) =\int_0^\ell\left\{g_T\left(D_T^TX, D_T^TY\right)-g_T\left(R^T(X, T)T, Y\right)\right\}dt.\label{index}\eeq

The {\it Ricci curvature} is defined by
\beqn\mathbf {Ric}(x, V):=\sum_{i=1}^{n-1}\mathbf {K}( x, V, e_i),\eeqn where $e_1, \cdots, e_{n-1}, \frac V{F(V)}$ form the orthonormal basis of $T_xM$ with respect to $g_V$. We say that {\it $\mathbf {Ric}\geq (\leq) (n-1)k$} if $\mathbf {Ric}(x, V)\geq (\leq) (n-1)kF^2(x, V)$  for all $(x, V)\in TM\backslash\{0\}$.

 Let $\gamma$ be a geodesic with $\gamma(0)=x$ and $\dot{\gamma}(0)=y\in T_xM\setminus\{0\}$. The {\it S-curvature} $\mathbf S$ is defined as a rate of change of the distortion $\tau$ along the geodesic $\gamma(t)$, i.e., $$\mathbf S(x, V)=\frac d{dt}\tau(\gamma(t), \dot\gamma(t))|_{t=0}.$$
 % For the Busemann-Hausdorff measure $\mathfrak m_{BH}$, the S-curvature $\mathbf S_{BH}\equiv 0$ on a Berwald space $(M, F)$ (Proposition 7.3.1, \cite{Sh}).
 The S-curvature is a non-Riemannian geometric quantity since it vanishes on a Riemannian manifold. We say that $\mathbf S\geq h$ (resp. $\mathbf S\leq h$) for some $h\in \mathbb R$ if
 \beqn \mathbf S(x, V)\geq hF(x, V) \ \ ({\rm{resp.}}\ \mathbf S(x, V)\leq hF(x, V))\eeqn for any $(x, V)\in TM\backslash\{0\}$ (\cite{Sh}, \cite{Xia}).

\subsection{Gradient and Laplacian}

Let $(M, F)$ be an $n$-dimensional Finsler manifold. Fix a point $x\in M$, the {\it Legendre transformation} ${\mathcal L}^{\ast} : T_x^{\ast} M\to T_xM$ assigns to each covector $\xi\in T_x^{\ast} M$ the unique vector $y\in T_xM$, which minimizes the map
$$E^{\ast}(y):=\langle\xi, y\rangle-\frac{1}{2}F^2(y),$$
where $\langle\xi, y\rangle:=\xi(y)$ denoted the canonical pairing between $T_xM$ and $T^{\ast}_xM$.
The minimizer $y={\mathcal L}^{\ast}(\xi)$ can also be interpreted as the unique vector with
\beq
 F({\mathcal L}^{\ast}(\xi))=F^{\ast}(\xi)\quad\mathrm{and}\quad\langle\xi, {\mathcal L}^{\ast}(\xi)\rangle=F({\mathcal L}^{\ast}(\xi))F^{\ast}(\xi),\label{FF*}
\eeq which implies that $\langle\xi, {\mathcal L}^{\ast}(\xi)\rangle=F^{{\ast}2}(\xi)$ for any $\xi\in T_x^{\ast}M.$ Similarly, we can define the Legendre transformation ${\mathcal L}: T_xM\to T_x^{\ast}M$, assigning to each vector $y\in T_xM$ the unique covector $\xi\in T_x^{\ast} M$, which minimizes the map
$E(\xi):=\langle\xi, y\rangle-\frac{1}{2}F^{\ast 2}(\xi).$
For the minimizer $\xi={\mathcal L}(y)$, we have similar relations in (\ref{FF*}) and
\beq \mathcal L^{\ast}(\xi)=F^{\ast}(\xi) \frac{\partial F^{\ast}(\xi)}{\partial \xi}, \ \ \ \forall \ \xi\in T_x^{\ast}M.\label{F*-xi}\eeq
 By identifying $T_xM$ with $(T_x^{\ast}M)^{\ast}$ and $F$ with $F^{\ast\ast}$, we have $\mathcal L^{-1}=\mathcal L^{\ast}$. ${\mathcal L}$ and ${\mathcal L^{\ast}}$ can be extended to $TM$ and $T^{\ast}M$ in natural way (\cite{Sh}, \cite{Xia}).

For a smooth function  $u: M\rightarrow \mathbb R$, the {\it gradient} vector $\nabla u$ of $u$ is defined by $\nabla u:={\mathcal L}^{\ast}(du)\in TM$.
 Obviously, $\nabla u=0$ if $du=0$. In general, $\nabla u$ is only continuous on $M$, but smooth on $M_u:=\{x\in M|du(x)\neq 0\}$.
For the distance function $r(x)=d_F(o, x)$, we have $F(\nabla r)=F^{\ast}(dr)=1$  a.e. on $M$.
 Let $\mathfrak m$ be an arbitrary measure on $M$ and $\varphi$ be a piecewise $C^1$ function on $M$ such that every $\varphi^{-1}(t)$ is compact. The (area) measure on $\varphi^{-1}(t)$ is defined by $dA:= \nabla \varphi\rfloor d\mathfrak m$. For any $f\in C^0(M)$, we have the {\it co-area formula} (\cite{Sh})
 \beq \int_M fF(\nabla\varphi)d\mathfrak m=\int_{-\infty}^{\infty}\left(\int_{\varphi^{-1}(t)}fdA\right)dt.\label{co-area}\eeq

For any $p>1$, we define the {\it Sobolev space} $\mathcal W_0^{1, p}(M)$ as the closure of the space $C_0^\infty(M)$  with respect to the backward topology induced by the pseudo--norm
 \beqn \|u\|_{1, p}:=\|u\|_{L^p}  +  \|F^{\ast}(du)\|_{L^p},\label{norm1}\eeqn namely,  $u\in \mathcal W_0^{1, p}(M)$ if and only if there is a sequence $\{u_n\}\subset C_0^\infty(M)$ satisfying $\lim_{n\rightarrow +\infty}\|u-u_n\|_{1, p}=0$. Similarly, we also can define $\mathcal W^{1, p}(M)$.
  Note that $\mathcal W^{1, p}(M)$ (resp. $\mathcal W_0^{1, p}(M)$) is different from $ W^{1, p}(M)$ (resp. $W_0^{1, p}(M)$) defined as in \cite{Xia1}--\cite{Xia2}, which is a completion of $C_0^\infty(M)$  with respect to the Sobolev norm
\beqn \|u\|_{1, p}:=\|u\|_{L^p}  +  \|F^{\ast}(du)\|_{L^p}+ \|F^{\ast}(-du)\|_{L^p}.\label{norm1}\eeqn The latter is a linear space but the former is not a linear space unless $\Lambda<+\infty$ (Proposition 3.1, \cite{KLZ}).

 For a weakly differentiable vector field $V$ on $(M, \mathfrak m)$,  we define the {\it divergence} of $V$  by
\beqn
\int_{M}\phi \text{div} (V) d\mathfrak{m}=-\int_{M} d\phi(V) d\mathfrak{m}, \ \ \ \forall\varphi\in C_0^\infty(M). \label{div}\eeqn From this, for any $f\in C^1(M)$ and a weakly differentiable vector field $V$ on $M$, we have
\beq d(fV)=df(V) +f\text{div} V \label{fV}\eeq in the weak sense. The {\it Finsler Laplacian} of $u\in W^{1, 2}(M)$ is formally defined by
 $\Delta u:=\mbox{div}(\nabla u).$ To be more precise, $\Delta u$ should be understood in the weak sense through the identity
\beq\int_M\varphi\Delta u d\mathfrak m=-\int_M d\varphi(\nabla u)d\mathfrak m, \label{Laplacian}\eeq
for all $\varphi\in C_0^\infty(M)$. If $\int_M d\phi(\nabla u)dm=0$ for any $\phi\in C_0^\infty(M)$, then $u$ is called the {\it Finsler harmonic function}.
Locally, write $dm=\sigma(x)dx$. Then
\beq\Delta u=\frac 1{\sigma}\frac{\partial}{\partial x^i}\left(\sigma g^{ij}(\nabla u)\frac{\partial u}{\partial x^j}\right) \ \ \ {\rm{on}}\ \  M_u, \label{loc-Lap}\eeq  where $M_u:=\{x\in M|du(x)\neq 0\}$. From this, it is easy to check that
\beq \Delta u={\rm{tr}}_{\nabla u}\nabla^2u-\mathbf S(\nabla u) \ \ \  {\rm{on}}\  M_u, \label{Delta u}\eeq  where tr$_{\nabla u}$ means taking the trace with respect to the weighted Riemannian metric $g_{\nabla u}$ (Lemma 3.3 in \cite{WX}; Lemma 6.1.1 in \cite{Xia}). For definition of (first) $p$-eigenvalue for (Finsler) $p(>1)$-Laplacian, we refer to \cite{GS} and \cite{Xia1}--\cite{Xia2}, in which the energy functional is defined on $W_{loc}^{1, p}$. Similarly, we can consider the energy functional on  $\mathcal W_{loc}^{1, p}(M)$ to define the (first) $p$-eigenvalue for $p$-Laplacian (cf. \cite{WX}, \cite{KLZ}).

\section{Comparison theorems}
In this section we shall derive some sharp Laplacian comparison theorems for the distance function and volume comparison Theorems under some curvature conditions, which will be used in the subsequent arguments.

 Let $(M, F)$ be a simply connected forward complete Finsler manifold and $r: =d_F(o, \cdot)$ be the distance function from some $o\in M$. Then $r\in {\rm{Lip}}(M)$ (the set of Lipschitz functions on $M$)  and $F(\nabla r)=1$ a.e. on $M$.  If the flag curvature $\mathbf K\leq \kappa\leq 0$, then $r$ is smooth  for $0 < r \leq +\infty$ by Cartan-Hadamard's Theorem; if the Ricci curvature $\mathbf{Ric}\geq (n-1)\kappa>0$, then $r$ is smooth for $0 < r \leq \min\{\mathfrak{i}_{o}, \pi/\sqrt{\kappa}\}$ by Myers' Theorem (\cite{BCS}, \cite{Sh}, \cite{Xia}). Because of this, we use the convention: $\pi/\sqrt{\kappa}=+\infty$ when $\kappa\leq 0$.

\begin{thm}\label{thm21}
Let $(M, F, \mathfrak{m})$ be an $n$-dimensional forward complete Finsler measure space and $r=d_F(o, \cdot)$ be the distance function from some $o\in M$. If $\mathbf K(\nabla r, \cdot)\leq\kappa$ and $\mathbf S(\nabla r)\leq(n-1)h$ for some $\kappa, h\in\mathbb{R}$, then
\beq
	\Delta r\geq(n-1)\big(\mathfrak{ct}_\kappa(r)-h\big), \ \ \ \ 0 < r \leq \min\{\mathfrak{i}_{o}, \pi/\sqrt{\kappa}\}, \label{Delta-r1} \eeq in which the equality holds if and only if
\beq \mathbf K(\nabla r,\cdot)=\kappa, \ \ \ \ \mathbf S(\nabla r)=(n-1)h. \label{KS}\eeq

Further, the function
\beqn
	f_1(r):=\frac{m(B^+_r(o))}{\mathcal{V}_{o,\kappa, h, n}(r)}, \ \ \ \  0 < r \leq \min\{\mathfrak{i}_{o}, \pi/\sqrt{\kappa}\},\eeqn
is non-decreasing in $r$, where $\mathcal{V}_{o,\kappa, h, n}(r)=\chi(r) \mathfrak{L}_{\mathfrak{m}}(o)$, here $\chi(r)=\int_0^r (e^{-ht} \mathfrak{s}_\kappa(t))^{n-1}dt$.  %\beq \mathcal{V}_{o,\kappa,h,n}(r)=\int_{S_oM} e^{-\tau(y)}d\nu_o(y) \int_0^r (e^{-ht} \mathfrak{s}_\kappa(t))^{n-1}dt.\label{mathcal-V}\eeq
 In particular, $f_1(r)\geq1$ and hence $m(B_r^+(o))\geq \mathcal{V}_{o,\kappa, h, n}(r)$, in which the equality holds if and only if (\ref{KS}) holds.
In this case,
\beq \hat{\sigma}_o(r, y)=e^{-\tau(y)}(e^{-hr}\mathfrak{s}_{\kappa}(r))^{n-1},\quad \, 0 < r \leq \min\{\mathfrak{i}_{o}, \pi/\sqrt{\kappa}\}.\label{hat-sigma}\eeq \end{thm}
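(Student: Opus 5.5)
Along a unit-speed geodesic $\gamma_y(t)=\exp_o(ty)$, $y\in S_oM$, $0<t<\min\{\mathfrak i_o,\pi/\sqrt\kappa\}$, I will combine a Riccati (index form) comparison for the Hessian of $r$ with the S-curvature bound, and then integrate the resulting density estimate in polar coordinates. On the smooth region, $\nabla r=\dot\gamma_y(t)=:T$, so by (\ref{Delta u}) we have $\Delta r={\rm tr}_{\nabla r}\nabla^2 r-\mathbf S(\nabla r)$. Since $\mathbf S(\nabla r)\le (n-1)h$ and $F(\nabla r)=1$, it suffices to prove
\[
{\rm tr}_{\nabla r}\nabla^2 r\ge (n-1)\mathfrak{ct}_\kappa(r).
\]

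\textbf{Hessian trace bound.} Pick a $g_T$-orthonormal frame $e_1,\dots,e_{n-1}$ orthogonal to $T$ at $\gamma(t_0)$, and let $J_i$ be the Jacobi fields with $J_i(0)=0$, $J_i(t_0)=e_i$. Then $\nabla^2 r(e_i,e_i)=I_\gamma(J_i,J_i)$ on $[0,t_0]$, using (\ref{index}). For $\mathbf K\le\kappa$, the standard Rauch/index comparison (the Finsler analogue of the Hessian comparison, applied with the reference vector $T$) gives
\[
I(J_i,J_i)\ge \mathfrak{ct}_\kappa(t_0).
\]
The cleanest route is to work with the Riccati equation for the shape operator $\mathcal A=\nabla^2 r$ on $T^\perp$, namely $\mathcal A'+\mathcal A^2+R_T=0$. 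Here $R_T\le \kappa\,{\rm Id}$ by (\ref{flag-curv}), and comparison with the scalar solution $\mathfrak{ct}_\kappa$ (which has the same asymptotics $1/t$ as $t\to0$) gives $\mathcal A\ge \mathfrak{ct}_\kappa\,{\rm Id}$. Taking the trace yields (\ref{Delta-r1}). Equality forces $\mathcal A\equiv \mathfrak{ct}_\kappa{\rm Id}$, hence $R_T=\kappa\,{\rm Id}$ on $T^\perp$, i.e. $\mathbf K(\nabla r,\cdot)=\kappa$, and also $\mathbf S(\nabla r)=(n-1)h$; the converse is immediate. I expect the main obstacle to be this matrix-Riccati comparison in the Finsler setting. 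It requires checking that $D^T$ along $\gamma$ behaves like a Levi-Civita connection for $g_T$, which holds because $C_T(T,\cdot,\cdot)=0$ in (\ref{compatibility}). A fallback is to use the index lemma with the test fields $\frac{\mathfrak s_\kappa(t)}{\mathfrak s_\kappa(t_0)}E_i(t)$, $E_i$ parallel, but that gives the bound in the reverse direction. So for an upper curvature bound I will use the Jacobi-field/Riccati comparison (Rauch) rather than the index lemma.

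\textbf{Volume density.} In polar coordinates (\ref{dm-polar}), $\Delta r=\partial_r\log\hat\sigma_o(r,y)$. Hence
\[
\partial_r\log\frac{\hat\sigma_o(r,y)}{(e^{-hr}\mathfrak s_\kappa(r))^{n-1}}\ge0 .
\]
Combined with the limit (\ref{sigma-limit}), this gives
\[
\hat\sigma_o(r,y)\ge e^{-\tau(y)}(e^{-hr}\mathfrak s_\kappa(r))^{n-1},
\]
with equality exactly when (\ref{KS}) holds along $\gamma_y$, which is (\ref{hat-sigma}). Moreover, the ratio $\hat\sigma_o(r,y)/\big(e^{-\tau(y)}(e^{-hr}\mathfrak s_\kappa(r))^{n-1}\big)$ is non-decreasing in $r$.

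\textbf{Volume comparison.} Integrate $\hat\sigma_o$ over $S_oM$ and then in $r$. Since $r<\mathfrak i_o$, there is no cut-locus truncation. The usual Bishop–Gromov lemma says that if $a(t)/b(t)$ is non-decreasing then $\int_0^r a/\int_0^r b$ is non-decreasing. Applying it first fiberwise to $\hat\sigma_o(\cdot,y)$ versus $e^{-\tau(y)}(e^{-ht}\mathfrak s_\kappa)^{n-1}$, and then summing over $y$ (the comparison denominators are $e^{-\tau(y)}\chi(r)$), shows that $f_1$ is non-decreasing. Its limit as $r\to0$ is $1$ by (\ref{sigma-limit}) and (\ref{tau-integ}), so $f_1\ge1$. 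Equality $m(B^+_r(o))=\mathcal V_{o,\kappa,h,n}(r)$ forces pointwise equality of the densities, hence (\ref{KS}).
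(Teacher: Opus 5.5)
Your proposal is correct. Its skeleton matches the paper's: reduce (\ref{Delta-r1}) via (\ref{Delta u}) to ${\rm tr}_{\nabla r}\nabla^2 r\ge(n-1)\mathfrak{ct}_\kappa(r)$; get the equality case because ${\rm tr}_{\nabla r}\nabla^2 r-(n-1)\mathfrak{ct}_\kappa(r)\ge0\ge\mathbf S(\nabla r)-(n-1)h$ forces both sides to vanish; then integrate $\partial_r\log\hat\sigma_o=\Delta r$ in polar coordinates.

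The genuine difference is the Hessian comparison. The paper does not use a Riccati equation. It transplants the Jacobi field $J$ of $M$ (with $J(0)=0$, $J(\ell)=X$, $X\perp T$ unit) to a space of constant flag curvature $\kappa$ via parallel orthonormal frames. This gives a field $Z$ with the same endpoints as the model Jacobi field $\tilde J$, and the paper then applies the index lemma \emph{on the model}: $\mathfrak{ct}_\kappa(r)=I_{\tilde\gamma}(\tilde J,\tilde J)\le I_{\tilde\gamma}(Z,Z)\le I_\gamma(J,J)=\nabla^2r(X,X)$. The last step holds because the derivative terms coincide and $\kappa\sum_i(\lambda^i)^2\ge g_T(R_T(J),J)$. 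So your remark that the index lemma only gives the reverse inequality is true only when it is applied on $M$ with model test fields, which is what the paper does for Theorem \ref{thm22}.

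What each route buys:
\begin{itemize}
\item The paper's route needs nothing beyond the index lemma, and its equality case ($\tilde J=Z$, $R_T(J)=\kappa J$) is read off directly from the chain above.
\item Your matrix-Riccati route avoids the auxiliary model and gives equality cleanly: $\mathcal A\equiv\mathfrak{ct}_\kappa{\rm Id}$ on an interval forces $R_T=\kappa\,{\rm Id}$.
\item However, you must actually derive $D^T_T\mathcal A+\mathcal A^2+R_T=0$ in the Finsler setting. For example, $D^T_TJ=D^T_JT=\mathcal AJ$ follows from (\ref{torsionfree}) and $[T,J]=0$, and then you use the Jacobi equation.
\item You also need to run the matrix comparison through the $1/t$ singularity at $t=0$, e.g.\ for $\mathcal A-\mathfrak{ct}_\kappa{\rm Id}\to0$.
\end{itemize}

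A small correction: compatibility of $D^T$ with $g_T$ along $\gamma$ comes from $D^T_TT=0$, which kills the term $C_T(D^T_TT,\cdot,\cdot)$ in (\ref{compatibility}). It does not come from $C_T(T,\cdot,\cdot)=0$.

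In the volume part, your fiberwise ratio lemma followed by averaging over $y$ with the fixed weights $e^{-\tau(y)}/\mathfrak{L}_{\mathfrak{m}}(o)$ is equivalent to the paper's computation of $\frac{d}{dr}\log f_1\ge0$ from (\ref{int-SM}). Your equality argument integrates the pointwise inequality $\hat\sigma_o\ge e^{-\tau}(e^{-hr}\mathfrak s_\kappa)^{n-1}$, so it already works from equality at a single radius. The paper instead differentiates $m(B^+_r(o))=\mathcal V_{o,\kappa,h,n}(r)$ in $r$.
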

\begin{proof} Let us first prove the first claim. For any $x\in M$,  let $\gamma: [0, \ell]\rightarrow M$ be a normal minimal geodesic from $o=\gamma(0)$ to $x=\gamma(\ell)$ such that $x$ is not the cut point of $o$. Then $r(\gamma(t))=d_F(o, \gamma(t))$ is smooth for $t\in [0, \ell]$ and $T(t): =\dot\gamma(t)=\nabla r(\gamma(t))$ with $F(T)=1$. Since $x$ is not the cut point of $o$, we have $\nabla^2r(T, T)=\nabla^2r(T, X)=0$ for any $X\in T_xM$ by Propositions 6.3.1 in \cite{Xia}. We now consider $X\in T_xM$ with $g_{T}(T, X)=0$. Then there is a unique Jacobi field $J$ on $M$ along $\gamma$ such that $J(0)=0$ and $J(r)=X$ (note that $J=0$ if $X=0$ by uniqueness of $J$).  By Proposition 6.3.2 and its proof in \cite{Xia}, we have $g_T(T, J)=0$ along $\gamma$ and $ \nabla^2r(X, X)=I_\gamma(J, J)$.

On the other hand, let $(\tilde M, \tilde F)$ be a Finsler manifold of constant flag curvature $\kappa$. Similarly, let $\tilde\gamma: [0, \ell]\rightarrow \tilde M$ be a normal minimal geodesic such that $\tilde \gamma(\ell)$ is not the cut point of $\tilde \gamma(0)$. We denote by $\tilde {\cdot}$ the corresponding notations on $\tilde M$, for example, $\tilde r=d_{\tilde F}(\tilde\gamma(0), \cdot)$ etc.. Then $\tilde T(t): =\dot{\tilde \gamma}=\tilde \nabla \tilde r$ is smooth  with $\tilde F(\tilde T)=1$. Since $\tilde \nabla^2\tilde r(\tilde T, \cdot)=0$ as above, it suffices to consider $\tilde X\in T_{\tilde \gamma(\ell)}\tilde M$ with $\tilde g_{\tilde T}(\tilde T(\ell), \tilde X)=0$. In this case, there is a unique Jacobi field $\tilde J$ along $\tilde \gamma$ with $\tilde J(0)=0$ and $\tilde J(\ell)=\tilde X$. Thus, $\tilde g_{\tilde T}(\tilde T, \tilde J)=0$ along $\tilde \gamma$ and
\beq \tilde \nabla^2\tilde r(\tilde X, \tilde X)=\mathfrak{ct}_\kappa(\tilde r)\left\{\tilde g_{\tilde T}\big(\tilde X, \tilde X\big)-\tilde g_{\tilde T}\big(\tilde T(\ell), \tilde X\big)^2\right\}= \mathfrak{ct}_\kappa(\tilde r)\tilde g_{\tilde T}\big(\tilde X, \tilde X\big)\label{t-H-r}  \eeq by (6.3.8) in \cite{Xia}. Choose a parallel $g_{\dot\gamma}$-orthonormal frame fields $e_1(t), \ldots, e_n(t)=\dot\gamma(t)$ along $\gamma$ and write $$J(t)=\sum\limits_{i=1}^{n-1}\lambda^i(t)e_i(t), \ \ \ \ \ \lambda^i(0)=0\ (1\leq i\leq n-1).$$
Similarly, we choose a parallel $\tilde g_{\dot{\tilde \gamma}}$--orthonormal frame fields $\tilde e_1(t), \ldots, \tilde e_n(t)=\dot{\tilde\gamma}$ along $\tilde\gamma$ such that
$$\tilde X=\tilde J(\ell)=\sum\limits_{i=1}^{n-1}\lambda^i(\ell)\tilde e_i(\ell).$$ Define a vector field $Z$ along $\tilde\gamma$ by $Z(t)=\sum_{i=1}^{n-1}\lambda^i(t)\tilde e_i(t).$ Then $\tilde g_{\tilde T}(\tilde T, Z)=0$,  $Z(0)=J(0)=\tilde J(0)=0$ and $Z(\ell)=\tilde X=\tilde J(\ell)$. It follows from the index lemma (\cite{BCS}, \cite{Sh}, \cite{Xia}) and $\mathbf K(T, \cdot)\leq \kappa$ that
\beq \tilde\nabla^2\tilde r(\tilde X, \tilde X)|_{\dot{\tilde\gamma}(\ell)}&=&I_{\tilde\gamma}(\tilde J, \tilde J)\leq I_{\tilde\gamma}(Z, Z)\nonumber \\
&=& \int_0^\ell\left\{ \tilde g_{\tilde T}\left(\tilde D_{\tilde T}^{\tilde T}Z, \tilde D_{\tilde T}^{\tilde T}Z\right)-\tilde g_{\tilde T}\left(\tilde R_{\tilde T}(Z), Z\right)\right\}dt\nonumber \\
%&=&\int_0^\ell\left\{\sum_i(\lambda^{i'})^2-\kappa\sum_i(\lambda^i)^2\right\}dt \nonumber \\
&\leq & \int_0^\ell\left\{  g_{T}\left(D_{T}^{T} J, D_{T}^{T}J\right)- g_{T}\left(R_{T}(J), J\right)\right\}dt\nonumber \\
&=&I_{\gamma}(J, J)= \nabla^2 r\left(X, X\right)|_{\gamma(\ell)}.\label{Hr-XX}\eeq
Since $F(\dot\gamma)=\tilde F(\dot{\tilde\gamma})=1$, we have $r(\gamma(t))=\tilde r(\tilde \gamma(t))$ for any $t\in [0, \ell]$. Observe that (\ref{t-H-r}) keeps invariant up to a scaling of $\tilde X$. Thus we may assume that $\tilde X\in T_{\tilde \gamma(\ell)}M$ with $\tilde g_{\tilde T}(\tilde X, \tilde X)=1$ and $\tilde g_{\tilde T}(\tilde T(\ell), \tilde X)=0$, which mean that $\tilde \nabla^2\tilde r\big(\tilde X, \tilde X\big)|_{\tilde \gamma(\ell)}=\mathfrak{ct}_\kappa(\tilde r(\tilde \gamma(\ell))=\mathfrak{ct}_\kappa(r(\gamma(\ell))$ by (\ref{t-H-r}).
From this and (\ref{Hr-XX}), one obtains that $ \nabla^2r(X, X)|_x\geq \mathfrak{ct}_\kappa(r(x))$. Taking the trace on both sides of this inequality yields
\beq {\rm{tr}}_{\nabla r}(\nabla^2r)|_x=\sum\limits_{i=1}^n\nabla^2r(e_i(\ell), e_i(\ell))\geq (n-1)\mathfrak{ct}_\kappa(r(x)),\label{H(r1)}\eeq which together with (\ref{Delta u}) yields (\ref{Delta-r1}). Obviously, the equality in (\ref{H(r1)}) holds if and only if the equality in (\ref{Hr-XX}) holds if and only if $$\tilde J=Z=\sum\limits_{i=1}^{n-1}\lambda^i(t)\tilde e_i(t), \ \ \ \ \  g_T\left(R_T(J), J\right)=\tilde g_{\tilde T}\left(\tilde R_{\tilde T}(Z),  Z\right)=\kappa\sum_{i=1}^{n-1}(\lambda^i)^2.$$  The latter equality implies that $R_{T}(J)=\kappa J$ along $ \gamma$ since $g_T(T, J)=0$. In particular, $R_{\dot\gamma}(X)=R_{\dot\gamma}(J(\ell))=\kappa J(\ell)=\kappa X$. Hence $\mathbf K(\nabla r)=\kappa$ by the arbitrariness of $X$.  On the other hand,  the equality in (\ref{Delta-r1}) holds if and only if
\beq {\rm{tr}}_{\nabla r}(\nabla^2r) -(n-1)\mathfrak{ct}_\kappa(r)=\mathbf S(\nabla r)-(n-1)h \label{rS}\eeq for $0 < r\leq \min\{\mathfrak{i}_{o}, \pi/\sqrt{\kappa}\}$. By the assumptions, the RHS of (\ref{rS}) is non-positive, while (\ref{H(r1)}) shows that the LHS of (\ref{rS}) is nonnegative. Hence the both sides of (\ref{rS}) are identically zero. Thus (\ref{KS}) holds. The converse is obvious. Thus the first claim follows.

 Further, in the polar coordinate system $(r, y)$, we rewrite $d\mathfrak m$ as in the form (\ref{dm-polar}).  Since $r=d_F(o, \cdot)$ is smooth for $0<r< \mathfrak i_o$, by (\ref{loc-Lap}), we have $\Delta r=\frac{\partial}{\partial r}\log\hat\sigma_o(r, y)$ for $0<r<\mathfrak i_o$. Plugging this into (\ref{Delta-r1}) yields
\beq \frac {\partial}{\partial r}\log\hat\sigma_o(r, y)\geq (n-1)\left(\frac{\partial}{\partial r}\log\mathfrak s_\kappa(r)-h\right), \label{sigma-s} \eeq where $0<r\leq \min\{\mathfrak i_o, \pi/\sqrt{\kappa}\}$.  Replacing $r$ by $s$ in (\ref{sigma-s}) and then integrating this on both sides in $s$ from $t$ to $r$ yields
\beqn \hat\sigma_o(t, y)\leq \hat\sigma_o(r, y)\left(\frac{e^{-ht}\mathfrak s_\kappa(t)}{e^{-hr}\mathfrak s_\kappa(r)}\right)^{n-1}.\eeqn
Integrating this inequality on both sides in $t$ from $0$ to $r$ leads to
\beqn \hat{\sigma}_{o}(r, y)\geq (e^{-hr}\mathfrak{s}_{\kappa}(r))^{n-1} \frac{\int_0^r\hat{\sigma}_{o}(t,y)dt}{\int_0^r(e^{-ht}\mathfrak{s}_{\kappa}(t))^{n-1}dt}.\eeqn
Further, by integrating the above inequality over $S_oM$, one obtains
\beq\label{int-SM}
 \int_{S_oM}\hat{\sigma}_{o}(r, y)d\nu_o(y)\geq (e^{-hr}\mathfrak{s}_{\kappa}(r))^{n-1}\frac{\int_{S_oM} d\nu_o(y)\int_0^r\hat{\sigma}_{o}(t, y)dt}{\int_0^r(e^{-ht}\mathfrak{s}_{\kappa}(t))^{n-1}dt}.
\eeq
 Since $m(B^+_r(o))=\int_{S_oM} d\nu_o(y)\int_0^r\hat{\sigma}_{o}(t, y)dt$ and $\mathcal{V}_{o, \kappa, h, n}(r)=\chi(r) \mathfrak{L}_{\mathfrak{m}}(o)$, their derivatives w.r.t. the variable $r$ are given by  \beq {m}'(B^+_r(o))=\int_{S_oM}\hat{\sigma}_{o}(r, y)d\nu_o(y), \ \ \ \ \ \  \ \mathcal{V}'_{o,\kappa, h, n}(r)=(e^{-hr} \mathfrak{s}_\kappa(r))^{n-1} \mathfrak{L}_{\mathfrak{m}}(o).\label{mv-der}\eeq
From this, one obtains
\beqn
\frac{d}{dr}\left(\log f_1(r)\right)&=& \frac{d}{dr}\big(\log{m}(B^+_r(o))-\log\mathcal{V}_{o,\kappa, h, n}(r)\big)\\\nonumber
&=&\frac{{m}'(B^+_r(o))}{{m}(B^+_r(o))}-\frac{\mathcal{V}'_{o,\kappa, h, n}(r)}{\mathcal{V}_{o,\kappa, h, n}(r)}\\\nonumber
&=&\frac{\int_{S_oM}\hat{\sigma}_{o}(r, y)d\nu_o(y)}{\int_{S_oM} d\nu_o(y)\int_0^r\hat{\sigma}_{o}(t,y)dt}-\frac{(e^{-hr}  \mathfrak{s}_\kappa(r))^{n-1}}{\int_0^r (e^{-ht} \mathfrak{s}_\kappa(t))^{n-1}dt}\geq 0, \eeqn where we used $(\ref{int-SM})$ in the last inequality.
Thus $f_1(r)$ is non-decreasing in $r$. By (\ref{sigma-s}), we have
\beq \frac{\partial}{\partial r}\left(\frac{\hat{\sigma}_{o}(r, y)}{(e^{-hr}\mathfrak{s}_{\kappa}(r))^{n-1}}\right)\geq 0,\label{sigma*} \qquad \ 0<r<\min\{\mathfrak{i}_{o}, {\pi}/{\sqrt{\kappa}}\}.  \eeq
Note that $({e^{-hr}\mathfrak{s}_\kappa}(r))^{n-1}\sim r^{n-1}$ as $r\rightarrow0^+$. From this and (\ref{sigma-limit}), we get $\lim\limits_{r\rightarrow 0^+}\frac{\hat{\sigma}_{o}(r, y)}{({e^{-hr}\mathfrak{s}_\kappa}(r))^{n-1}}=e^{-\tau(y)}$ and hence
\beq  \hat{\sigma}_{o}(r, y)\geq e^{-\tau(y)}\big({e^{-hr}\mathfrak{s}_\kappa}(r)\big)^{n-1}. \label{hat-sigma*}\eeq
Integrating this over $B^+_r(o)$ yields
$${m}(B^+_r(o))\geq \int_{S_oM}e^{-\tau(y)} d\nu_o(y) \int_0^r\big(e^{-hr}\mathfrak{s}_{\kappa}(r)\big)^{n-1}dt=\mathcal{V}_{o,\kappa, h, n}(r).$$
i.e., $f_1(r)\geq 1$. If the equality in (\ref{Delta-r1}) holds, then all inequalities in (\ref{sigma-s}) and  after (\ref{sigma-s}) become identities. So $f_1(r)=1$. Conversely, if $f_1(r)=1$, then ${m}(B^+_r(o))=\mathcal{V}_{o,\kappa, h, n}(r)$. Taking the derivatives with respect to $r$ on both sides of this equality and using (\ref{mv-der}), (\ref{tau-integ}), (\ref{hat-sigma*}) yield $\hat{\sigma}_{o}(r, y)=e^{-\tau(y)}(\mathfrak{s}_{\kappa}(r)e^{-hr})^{n-1}$. From this and (\ref{sigma*}), we have
$$e^{-\tau(y)}=\frac{\hat{\sigma}_{o}(r, y)}{(\mathfrak{s}_{\kappa}(r)e^{-hr})^{n-1}}\geq \lim\limits_{r\rightarrow 0^+}\frac{\hat{\sigma}_{o}(r, y)}{(\mathfrak{s}_{\kappa}(r)e^{-hr})^{n-1}}=e^{-\tau(y)},$$ which means that
$$\frac{\hat{\sigma}_{o}(r, y)}{(\mathfrak{s}_{\kappa}(r)e^{-hr})^{n-1}}=\lim\limits_{r\rightarrow 0^+}\frac{\hat{\sigma}_{o}(r, y)}{(\mathfrak{s}_{\kappa}(r)e^{-hr})^{n-1}}=e^{-\tau(y)}.$$ By the arbitrariness of $r$, the equality in (\ref{sigma*}) holds. From the proof before (\ref{sigma*}), we can deduce that the equality in (\ref{sigma-s}) holds, i.e., (\ref{Delta-r1}) becomes equality. Thus $f_1(r)=1$ if and only if the equality in (\ref{Delta-r1}) holds.  Consequently, $f_1(r)=1$ if and only if (\ref{KS}) holds.\end{proof}

\begin{rem} {\rm In \cite{WX}, the authors gave the Laplacian comparison (\ref{Delta-r1}) and the volume comparison without characterizations of the equalities under the assumptions that $\mathbf K\leq \kappa$ and $\mathbf S\leq (n-1)h$. Actually,  Theorem \ref{thm21} gives (\ref{Delta-r1}) under more weaker curvature conditions. Based on this, we prove the volume comparison theorem and characterize the sharpness for these two comparisons. The volume comparison result in Theorem \ref{thm21} is different from that in \cite{WX}.}
\end{rem}

Based on Theorem \ref{thm21}, we obtain the following corollary.

\begin{cor}\label{cor31} Let $(M, F, \mathfrak{m})$ be an $n$-dimensional forward complete Finsler measure space satisfying
\beqn \mathbf K\leq \kappa, \ \ \ \ \mathbf S\leq (n-1)h,  \eeqn for some $\kappa, h\in \mathbb R$ satisfying either $h=\kappa=0$, or $h>0$ if $\kappa\geq 0$  and $h>\sqrt{-\kappa}>0$ if $\kappa<0$. If there is some $o\in M$ such that
 $$ \mathfrak{L}_{\mathfrak{m}}(o)=\inf_{x\in M}  \mathfrak{L}_{\mathfrak{m}}(x), \ \ \ \ m(B_r^+(o))=\chi(r) \mathfrak{L}_{\mathfrak{m}}(o),$$ where $r=d_F(o, \cdot)$, then $\mathfrak m=\mathfrak m_{BH}$, $\mathbf K\equiv \kappa$ and $\mathbf S\equiv (n-1)h$. In this case, $m(B_{r}^+(x)=\mathcal{V}_{o,\kappa, h, n}(r)=\chi(r) \mathfrak{L}_{\mathfrak{m}}(o)$ only depending on the radius $r$ for any $x\in M$. \end{cor}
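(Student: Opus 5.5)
The plan is to apply the equality case of Theorem \ref{thm21} at $o$, pass to the flag curvature in every direction, obtain $\mathfrak m=\mathfrak m_{BH}$ from the minimality of $\mathfrak L_{\mathfrak m}(o)$ via Proposition \ref{prop21}, and then repeat the argument at an arbitrary point. The curvature hypotheses on $\kappa,h$ ($h>0$ if $\kappa\ge0$, $h>\sqrt{-\kappa}$ if $\kappa<0$, or $\kappa=h=0$) are there to make $\chi(r)$ finite and well behaved, but the core argument does not use them.

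\textbf{Step 1: equality at $o$.} The hypothesis $m(B_r^+(o))=\chi(r)\mathfrak L_{\mathfrak m}(o)=\mathcal V_{o,\kappa,h,n}(r)$ says that $f_1\equiv1$. By the equality case of Theorem \ref{thm21}, (\ref{KS}) holds along every minimal geodesic from $o$, and $\hat\sigma_o(r,y)=e^{-\tau(y)}(e^{-hr}\mathfrak s_\kappa(r))^{n-1}$ holds within the cut domain. I will also argue that the cut locus causes no loss. Since $\mathbf K\le\kappa$, comparison gives the estimate $\hat\sigma_o\ge e^{-\tau}(e^{-hr}\mathfrak s_\kappa)^{n-1}$ up to $i_y$. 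If some $i_y<\infty$ (or, for $\kappa>0$, if some $i_y<\pi/\sqrt\kappa$), then the $r$-derivative $m'(B_r^+(o))$ would lose the contribution of a positive-measure set of directions beyond the cut points, and this would contradict the identity of the derivatives in (\ref{mv-der}). Hence $\mathfrak i_o$ is at least the comparison range, $\exp_o$ is a diffeomorphism there, and every point of $M$ is reached by a geodesic from $o$ along which (\ref{KS}) holds.

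\textbf{Step 2: all flags and all directions.} Step 1 gives $\mathbf K(\nabla r,\cdot)=\kappa$ and $\mathbf S(\nabla r)=(n-1)h$ only for flag poles $\nabla r$. Because $\exp_o$ is a global diffeomorphism, at each $x$ the pole $\nabla r(x)$ is a single direction. To reach every direction at $x$ I would use the hypothesis $\mathfrak L_{\mathfrak m}(o)=\inf\mathfrak L_{\mathfrak m}$, as follows.
\begin{itemize}
\item For any $x$, Theorem \ref{thm21} at $x$ gives $m(B_r^+(x))\ge\chi(r)\mathfrak L_{\mathfrak m}(x)\ge\chi(r)\mathfrak L_{\mathfrak m}(o)$.
\item The opposite inequality should come from the explicit density at $o$. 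Since $\hat\sigma_o$ is pinned down in the polar coordinates at $o$, the measure of $B_r^+(x)$ for small $r$ is computable, and comparing the small-$r$ expansions gives $\mathfrak L_{\mathfrak m}(x)\le\mathfrak L_{\mathfrak m}(o)$.
\end{itemize}
Together these force $\mathfrak L_{\mathfrak m}(x)=\mathfrak L_{\mathfrak m}(o)$ for every $x$, so $\mathfrak L_{\mathfrak m}$ is constant and Proposition \ref{prop21} gives $\mathfrak m=\mathfrak m_{BH}$.

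\textbf{Step 3: conclusion at every point.} Once $m(B_r^+(x))=\chi(r)\mathfrak L_{\mathfrak m}(x)$ is known for every $x$, the equality case of Theorem \ref{thm21} applied at $x$ gives (\ref{KS}) for $r_x$. Every unit vector $V\in T_zM$ is $\nabla r_x(z)$ for a suitable $x$, namely a point slightly behind $z$ along the geodesic with initial direction $V$. Hence $\mathbf K\equiv\kappa$ and $\mathbf S\equiv(n-1)h$. The final assertion is then $m(B_r^+(x))=\chi(r)\mathfrak L_{\mathfrak m}(o)$ for all $x$.

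\textbf{Main obstacle.} The hardest step is Step 2, specifically proving $m(B_r^+(x))\le\chi(r)\mathfrak L_{\mathfrak m}(o)$ for $x\ne o$, since the hypothesis only controls balls centered at $o$. My first attempt is a small-radius comparison. Because $\mathbf K\le\kappa$ everywhere, geodesics diverge at least as fast as in the model, so I want to show that the density $\hat\sigma_x$ cannot exceed the model density times $e^{-\tau}$ unless the curvature is strictly larger than $\kappa$ somewhere, which is excluded. If that fails, the fallback is to first prove $\mathbf K\equiv\kappa$ along all geodesics through points reached from $o$, using the explicit Jacobi fields from Step 1 and the symmetry of flag curvature on the $2$-planes they span. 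With $\mathbf K\equiv\kappa$ in hand, the Laplacian equality at $x$ follows from the $\mathbf S$-bound together with the constancy of $\mathfrak L_{\mathfrak m}$.
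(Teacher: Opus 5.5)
Your Step 2 has a genuine gap, and it is the heart of the corollary.

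\textbf{Why the small-radius comparison fails.} For every $x$ one has $m(B^+_r(x))=\frac{r^n}{n}\mathfrak L_{\mathfrak m}(x)(1+o(1))$ as $r\to0^+$. So expanding at $x$ only returns $\mathfrak L_{\mathfrak m}(x)$, and the data at $o$ do not determine it. Up to the factor $n\,\mathrm{Vol}(\mathbb B^n(1))$, $\mathfrak L_{\mathfrak m}(x)$ is the density $d\mathfrak m/d\mathfrak m_{BH}$ at $x$. That density depends on the whole indicatrix $S_xM$. The rigidity along radial geodesics from $o$ only pins down $\hat\sigma_o$ and the osculating metric $g_{\nabla r}$, i.e.\ $F(x,\cdot)$ to second order near the single direction $\nabla r(x)$. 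It says nothing about the shape of $B^+_r(x)$.

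Moreover, even $\mathfrak L_{\mathfrak m}(x)=\mathfrak L_{\mathfrak m}(o)$ would not give $m(B^+_r(x))=\chi(r)\mathfrak L_{\mathfrak m}(x)$ for all $r$, which your Step 3 presupposes.

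\textbf{The heuristic and the fallback.} The heuristic in your last paragraph runs the wrong way. Under $\mathbf K\le\kappa$ and $\mathbf S\le(n-1)h$ one always has $\hat\sigma_x\ge e^{-\tau}(e^{-hr}\mathfrak s_\kappa(r))^{n-1}$. Strict excess occurs exactly when the curvature drops strictly \emph{below} $\kappa$ (or $\mathbf S$ below $(n-1)h$), and the hypotheses allow that. A local upper bound would require the reverse curvature bounds.

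The fallback fails as well. Finsler flag curvature depends on the flag pole, so $\mathbf K(\nabla r_o,\cdot)=\kappa$ says nothing about flags with other poles; there is no symmetry on the $2$-planes. Also, equality in (\ref{Delta-r1}) at $x$ needs $\mathbf S(\nabla r_x)=(n-1)h$ exactly, and neither the one-sided bound nor $\mathfrak m=\mathfrak m_{BH}$ supplies that.

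The cut-locus argument of Step 1 is also inconclusive. The identity (\ref{mv-der}) is compatible with lost directions being compensated by larger density in the remaining ones.

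\textbf{The missing idea.} What you need is a large-radius comparison at each $x$. This is precisely where the hypotheses on $(\kappa,h)$ enter, although you say the core argument does not use them. Fix $x$ and let $\tilde r=d_F(x,\cdot)$. Combine three facts: the monotonicity of $m(B^+_{\tilde r}(x))/\chi(\tilde r)$ from Theorem \ref{thm21} at $x$; the inclusion $B^+_{\tilde r}(x)\subset B^+_{\tilde r+r(x)}(o)$; and the two hypotheses at $o$. This gives
\begin{align*}
\mathfrak L_{\mathfrak m}(x)&\le \frac{m(B^+_{\tilde r}(x))}{\chi(\tilde r)}\le \limsup_{\tilde r\to\infty}\frac{m(B^+_{\tilde r}(x))}{\chi(\tilde r)}\le \limsup_{\tilde r\to\infty}\frac{m(B^+_{\tilde r+r(x)}(o))}{\chi(\tilde r)}\\
&=\mathfrak L_{\mathfrak m}(o)\lim_{\tilde r\to\infty}\frac{\chi(\tilde r+r(x))}{\chi(\tilde r)}=\mathfrak L_{\mathfrak m}(o)\le \mathfrak L_{\mathfrak m}(x).
\end{align*}
The conditions on $(\kappa,h)$ are what make $\chi(\tilde r+r(x))/\chi(\tilde r)\to1$. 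For instance, if $\kappa<0$ and $h<\sqrt{-\kappa}$, this ratio tends to $e^{(n-1)(\sqrt{-\kappa}-h)r(x)}>1$ and the argument collapses.

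All inequalities in the chain are therefore equalities. This yields at once $\mathfrak L_{\mathfrak m}(x)=\mathfrak L_{\mathfrak m}(o)$, hence $\mathfrak m=\mathfrak m_{BH}$ by Proposition \ref{prop21}. It also yields $m(B^+_{\tilde r}(x))=\chi(\tilde r)\mathfrak L_{\mathfrak m}(x)$ for every $\tilde r$, since the nondecreasing ratio is squeezed between $\mathfrak L_{\mathfrak m}(x)$ and its own limit. Your Step 3 then applies verbatim, and Step 1 becomes unnecessary, $x=o$ being a special case.
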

\begin{proof} Since Corollary \ref{cor31} was obtained in \cite{HKZ} in the case when $h=\kappa=0$, it suffices to consider the cases when $h>0$ if $\kappa\geq 0$,  and $h>\sqrt{-\kappa}>0$ if $\kappa<0$.  In these cases, we first remark that the integral $\int_0^{+\infty}(e^{-ht}\mathfrak s_{\kappa}(t))^{n-1}dt$ converges, i.e., $\lim_{r\rightarrow +\infty}\chi(r)<+\infty$, by the assumptions on $\kappa$ and $h$.
Fixed $x\in M$ but arbitrarily,  let $\tilde r:=r_x=d_F(x, \cdot)$. Then  $m(B_{\tilde r}^+(x))\geq \mathcal{V}_{x,\kappa, h, n}(\tilde r)=\chi(\tilde r) \mathfrak{L}_{\mathfrak{m}}(x)$ and $f_1(\tilde r)=\frac{m(B^+_{\tilde r}(x))}{\chi(\tilde r)}$ is non-decreasing by Theorem \ref{thm21}. Since $B_{\tilde r}^+(x)\subset B^+_{\tilde r+r(x)}(o)$, we have
\beq  \mathfrak{L}_{\mathfrak{m}}(x)&\leq &\frac{m(B_{\tilde r}^+(x))}{\chi(\tilde r)} \leq \limsup_{\tilde r\rightarrow +\infty}\frac{m(B_{\tilde r}^+(x))}{\chi(\tilde r)}\nonumber \\
& \leq & \limsup_{\tilde r\rightarrow +\infty}\frac{m(B^+_{\tilde r+r(x)}(o))}{\chi(\tilde r)}\nonumber \\
&=& \limsup_{\tilde r\rightarrow +\infty}\left(\frac{ m(B^+_{\tilde r+r(x)}(o))}{\chi(\tilde r+r(x))}\cdot \frac{\chi(\tilde r+r(x))}{\chi(\tilde r)}\right)\nonumber \\
&=&  \mathfrak{L}_{\mathfrak{m}}(o)\cdot \limsup_{\tilde r\rightarrow +\infty}\left(\frac{\chi(\tilde r)+\int_{\tilde r}^{\tilde r+r(x)}(e^{-ht}\mathfrak s_{\kappa}(t))^{n-1}dt}{\chi(\tilde r)}\right), \nonumber \\
&=& \mathfrak{L}_{\mathfrak{m}}(o)\leq  \mathfrak{L}_{\mathfrak{m}}(x),\label{L(x)} \eeq
where we used
$\lim_{\tilde r\rightarrow +\infty}\int_{\tilde r}^{\tilde r+r(x)}(e^{-ht}\mathfrak s_{\kappa}(t))^{n-1}dt=0$. In fact, by the mean value theorem and definition of $\mathfrak s_{\kappa}(t)$ with the assumptions on $\kappa, h$, there is some $\bar t\in (\tilde r, \tilde r+r(x))$ such that
\beqn \lim_{\tilde r\rightarrow +\infty}\int_{\tilde r}^{\tilde r+r(x)}(e^{-ht}\mathfrak s_{\kappa}(t))^{n-1}dt=\lim_{\bar t\rightarrow +\infty}r(x)(e^{-h\bar t}\mathfrak s_{\kappa}(\bar t))^{n-1}=0.\eeqn
Then (\ref{L(x)}) implies that $ \mathfrak{L}_{\mathfrak{m}}(x)= \mathfrak{L}_{\mathfrak{m}}(o)$ and $m(B_{\tilde r}^+(x))=\chi(\tilde r) \mathfrak{L}_{\mathfrak{m}}(x)=\mathcal{V}_{x,\kappa, h, n}(\tilde r)$. By Proposition \ref{prop21} and Theorem \ref{thm21}, we have $\mathfrak m=\mathfrak m_{BH}$, $\mathbf {K}(\nabla \tilde r)=\kappa$ and $\mathbf S( \nabla \tilde r)=(n-1)h$. In this case, $m(B_{\tilde r}^+(x))=m(B_{\tilde r}^+(o))=\chi(\tilde r) \mathfrak{L}_{\mathfrak{m}}(o)$ only depending on the radius $\tilde r$ for any $x\in M$. Assume that $\gamma: [0, \tilde r]\rightarrow M$ be a normal minimal geodesic with $\gamma(0)=x$ and $\dot\gamma(0)=y\in T_xM\setminus\{0\}$. Then $\dot\gamma(t)=\nabla \tilde r$. Thus, $\mathbf K(\dot\gamma(t))=\kappa$ and $\mathbf S(\dot \gamma(t))=(n-1)h$. Letting $t\rightarrow 0$ yields $\mathbf K(x, y)=\kappa$ and $\mathbf S(x, y)=(n-1)h$. By the arbitrariness of $(x, y)$,  $\mathbf K\equiv \kappa$ and $\mathbf S\equiv (n-1)h$.
\end{proof}

\begin{thm} \label{thm22}
Let $(M, F, \mathfrak{m})$ be an $n$-dimensional forward complete Finsler manifold and $r=d_F(o, \cdot)$ be the distance function from some $o\in M$.  If  $\mathbf{Ric}(\nabla r)\geq(n-1)\kappa$ and ${\mathbf S}(\nabla r)\geq(n-1)h$ for some $\kappa, h\in\mathbb{R}$, then
\beq
	\Delta r\leq(n-1)\big(\mathfrak{ct}_\kappa(r)-h\big), \ \ \ \ \ \ 0 < r\leq \min\{\mathfrak{i}_{o}, \pi/\sqrt{\kappa}\}, \label{Delta-r2}\eeq in which the equality holds if and only if (\ref{KS}) holds.
Further, the function
\beqn  f_2(r):=\frac{m(B^+_{r}(o))}{\nu_{o, \kappa, h, n}(r)} \eeqn is non-increasing in $r$, where $\mathcal{V}_{o,\kappa,h,n}(r)=\chi(r) \mathfrak{L}_{\mathfrak{m}}(o)$ as in Theorem \ref{thm21}.
In particular, $f_2(r)\leq 1$ and hence $
m(B^+_{o}(r))\leq \mathcal{V}_{o,\kappa, h, n}(r)$. $f_2(r)=1$ if and only if (\ref{KS}) holds. In this case, (\ref{hat-sigma}) holds.
 \end{thm}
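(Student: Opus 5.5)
The plan is to mirror the proof of Theorem \ref{thm21}, with the index comparison replaced by a Riccati (Bochner-type) argument along radial geodesics. This is needed because a Ricci lower bound only controls the trace of the Hessian, not each eigenvalue.

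\textbf{Step 1: Riccati inequality for $\mathrm{tr}_{\nabla r}\nabla^2 r$.} Fix a normal minimal geodesic $\gamma$ from $o$, with $T=\dot\gamma=\nabla r$, inside the cut locus. Let $A(t)=\nabla^2 r|_{\gamma(t)}$, restricted to $T^\perp$ with respect to $g_T$. As in Proposition 6.3.2 of \cite{Xia}, $A$ is a $g_T$-self-adjoint operator satisfying the Riccati equation
$$A'+A^2+R_T=0.$$
Set $m(t)=\mathrm{tr}_{\nabla r}\nabla^2 r=\mathrm{tr}A$. Tracing, and using Cauchy--Schwarz $\mathrm{tr}A^2\ge m^2/(n-1)$ together with $\mathbf{Ric}(T)\ge (n-1)\kappa$, gives
$$m'+\frac{m^2}{n-1}+(n-1)\kappa\le 0, \qquad m(t)\sim \frac{n-1}{t} \ \text{as } t\to0^+ .$$
The model function $\bar m=(n-1)\mathfrak{ct}_\kappa$ satisfies the corresponding equation with equality. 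The standard Riccati comparison, applied to $(m-\bar m)\mathfrak s_\kappa^2$, which is non-increasing and tends to $0$ at $0$, then yields $m\le(n-1)\mathfrak{ct}_\kappa(r)$.

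\textbf{Step 2: the Laplacian comparison (\ref{Delta-r2}) and its equality case.} Combining Step 1 with (\ref{Delta u}) and $\mathbf S(\nabla r)\ge(n-1)h$ gives (\ref{Delta-r2}). For equality I will argue exactly as at (\ref{rS}). The difference
$$\mathrm{tr}_{\nabla r}\nabla^2 r-(n-1)\mathfrak{ct}_\kappa(r)=\mathbf S(\nabla r)-(n-1)h$$
has its left side $\le0$ and its right side $\ge0$, so both vanish. Vanishing of the left side forces equality in every step of the Riccati argument:
\begin{itemize}
\item $A=\mathfrak{ct}_\kappa\,\mathrm{Id}$, from equality in Cauchy--Schwarz, together with $m=\bar m$;
\item the equation then gives $R_T=\kappa\,\mathrm{Id}$ on $T^\perp$, i.e.\ $\mathbf K(\nabla r,\cdot)=\kappa$;
\item vanishing of the right side gives $\mathbf S(\nabla r)=(n-1)h$.
\end{itemize}
So (\ref{KS}) holds. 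The converse follows directly from the Jacobi field formula (\ref{t-H-r}).

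\textbf{Step 3: volume comparison.} Since $\Delta r=\partial_r\log\hat\sigma_o$, I will get (\ref{sigma-s}) with the reverse inequality. From it, $\hat\sigma_o(r,y)/(e^{-hr}\mathfrak s_\kappa(r))^{n-1}$ is non-increasing in $r$, and by (\ref{sigma-limit}) it is bounded by $e^{-\tau(y)}$. For directions $y$ whose cut value $i_y<r$, I extend $\hat\sigma_o(\cdot,y)$ by zero, which preserves monotonicity.

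Integrating as in (\ref{int-SM}) with reversed signs, via the Gromov ratio lemma (if $a/b$ is non-increasing then $\int_0^r a/\int_0^r b$ is non-increasing), shows that $f_2$ is non-increasing. It also gives $\hat\sigma_o(r,y)\le e^{-\tau(y)}(e^{-hr}\mathfrak s_\kappa(r))^{n-1}$, hence $f_2\le1$ (with $\lim_{r\to0}f_2=1$).

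For the equality case I will reproduce the end of the proof of Theorem \ref{thm21}. If $f_2(r)=1$, differentiating and using (\ref{mv-der}) gives (\ref{hat-sigma}). Then the monotone ratio is constant, so (\ref{Delta-r2}) is an equality, and by Step 2 this gives (\ref{KS}).

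\textbf{Main obstacle.} I expect Step 1 to be the delicate part. The Riccati identity for the Finsler Hessian must be justified with respect to the reference vector field $\nabla r$: the Cartan-tensor term in (\ref{compatibility}) drops out because $C_T(T,\cdot,\cdot)=0$. Handling the cut locus also needs care, namely the zero extension in Step 3 and the restriction $r\le\min\{\mathfrak i_o,\pi/\sqrt\kappa\}$.
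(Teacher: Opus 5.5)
Your proof is correct, but for the Laplacian comparison it takes a different route from the paper's.

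\emph{The paper's route.} The paper stays with the index-form machinery of Theorem \ref{thm21}. It writes $\mathrm{tr}_{\nabla r}\nabla^2 r|_x=\sum_{i=1}^{n-1}I_\gamma(J_i,J_i)$, where $J_i$ are the Jacobi fields with $J_i(0)=0$ and $J_i(\ell)=e_i(\ell)$. It then applies the index lemma with the test fields $W_i=\frac{\mathfrak s_\kappa(t)}{\mathfrak s_\kappa(\ell)}e_i(t)$. Summing and using $\mathbf{Ric}\ge(n-1)\kappa$ yields $(n-1)\mathfrak{ct}_\kappa(\ell)$. Rigidity comes from the equality case of the index lemma ($J_i=W_i$) together with the Jacobi equation, which gives $R_{\dot\gamma}(e_i)=\kappa e_i$. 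This route needs no Riccati equation for the Chern connection and no analysis of $m(t)$ as $t\to0^+$, because $J_i$ and $W_i$ both vanish at $0$.

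\emph{Your route.} The Riccati approach costs you exactly those two inputs. First, you must derive $A'+A^2+R_T=0$ with reference field $\nabla r$; the Cartan terms drop because $C_T(T,\cdot,\cdot)=0$ and $D^T_TT=0$. Second, you need an upper bound such as $m(t)=O(1/t)$ near $0$, so that $(m-\bar m)\mathfrak s_\kappa^2\to0$. In return you get the inequality along the whole geodesic at once. You also get a more transparent rigidity statement: umbilicity from equality in Cauchy--Schwarz, then $R_T=\kappa\,\mathrm{Id}$ read off from the equation.

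\emph{The converse.} Formula (\ref{t-H-r}) is stated for a model space. What you actually use is that the computation behind it only needs $\mathbf K(\nabla r,\cdot)=\kappa$ along $\gamma$. Equivalently, you can invoke uniqueness of the Riccati solution with $A\sim t^{-1}\mathrm{Id}$.

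\emph{Volume comparison.} Your Step 3 matches the paper, except that you extend $\hat\sigma_o(\cdot,y)$ by zero past the cut value. That is a genuine gain. The paper only says the argument is ``similar to Theorem \ref{thm21}'', and that argument lives on $0<r\le\min\{\mathfrak i_o,\pi/\sqrt\kappa\}$. However, monotonicity of $f_2$ for all $r>0$ is exactly what Corollary \ref{cor32} uses when it lets $\tilde r\to+\infty$.
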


\begin{proof}  The proof of (\ref{Delta-r2}) is similar to that of Theorem 5.3 in \cite{WX}. To argue the case of equality, we give the sketch of proof.   For any $x\in M$, let $\gamma: [0, \ell]\rightarrow M$ be a normal minimal geodesic from $o=\gamma(0)$ to $x=\gamma(\ell)$ such that $x$ is not a cut point of $o$. Then $r(\gamma(t))=d_F(o, \gamma(t))$ is smooth and $\dot \gamma(t) =\nabla r(\gamma(t))$ with $F(\dot\gamma)=1$ for $0<r\leq \mathfrak i_{o}$.  Let $e_1, \ldots, e_n=\dot\gamma(t)$ be the parallel $g_T$-orthonormal frame fields along $\gamma$.  For $1\leq i\leq n-1$, let $J_i(t)$ be the unique Jacobi field along $\gamma$ with $J_i(0)=0, J_i(\ell)=e_i(\ell)$ and $W_i(t):=\frac {\mathfrak s_\kappa(t)}{\mathfrak s_\kappa(\ell)}e_i(t)$, where $\mathfrak s_\kappa(t)$ is defined as before. Obviously, we have $W_i(0)=J_i(0)=0$ and $W_i(\ell)=J_i(\ell)$. From Propositions 6.3.1--6.3.2 in \cite{Xia} and the basic index lemma (\cite{BCS}, \cite{Sh}, \cite{Xia}), we have
\beq {\rm{tr}}_{\nabla r}(\nabla^2r)|_x&=&\sum\limits_{i=1}^n\nabla^2r(e_i(\ell), e_i(\ell))=\sum\limits_{i=1}^{n-1}I_{\gamma}(J_i, J_i)\leq \sum\limits_{i=1}^{n-1}I_{\gamma}(W_i, W_i)\nonumber \\
&=&\frac 1{\mathfrak s_{\kappa}(\ell)^2}\int_0^{\ell}\left\{(n-1)\mathfrak s_\kappa'(t)^2-{\mathbf{Ric}}(\nabla r)\mathfrak s_\kappa(t)^2\right\}dt\label{H(r)} \\
&\leq & \frac 1{\mathfrak s_{\kappa}(\ell)^2}\int_0^{\ell}\left\{(n-1)\mathfrak s_\kappa'(t)^2-(n-1)\kappa \mathfrak s_\kappa(t)^2\right\}dt=(n-1)\mathfrak{ct}_\kappa(r(x)).\nonumber \eeq Thus, (\ref{Delta-r2}) follows from (\ref{H(r)}) and (\ref{Delta u}). Obviously, the equality in (\ref{H(r)}) holds if and only if $J_i(t)=W_i(t)=\frac {\mathfrak s_\kappa(t)}{\mathfrak s_\kappa(\ell)}e_i(t)$ and $\mathbf{Ric}(\nabla r)=(n-1)\kappa$. From the Jacobi equation, we obtain
$$ R_{\dot\gamma}(e_i)=-R_{\dot\gamma}(\dot\gamma, e_i)\dot\gamma=\kappa e_i, $$ which means that $\mathbf K(\nabla r, \cdot)=\mathbf K(\dot\gamma, \cdot)=\kappa$ by (\ref{flag-curv}). Note that the equality in (\ref{Delta-r2}) holds if and only if (\ref{rS}) holds for $0 < r\leq \min\{\mathfrak{i}_{o}, \pi/\sqrt{\kappa}\}$.  By the assumption, the RHS of (\ref{rS}) is nonnegative. On the other hand,  (\ref{H(r)}) implies that the LHS of (\ref{rS}) is nonpositive. Hence the both sides of (\ref{rS}) are identically zero. Thus (\ref{KS}) holds. The converse is obvious. The rest proof is similar to that of Theorem \ref{thm21}. This finishes the proof.
\end{proof}

Based on Theorem \ref{thm22}, we have the following result.

\begin{cor}\label{cor32} Let $(M, F, \mathfrak{m})$ be an $n$-dimensional forward complete Finsler measure space satisfying
\beqn \mathbf {Ric}\geq (n-1)\kappa, \ \ \ \ \mathbf S\geq (n-1)h,  \eeqn for some $\kappa, h\in \mathbb R$ satisfying either $h=\kappa=0$, or $h>0$ if $\kappa\geq 0$ and $h>\sqrt{-\kappa}>0$ if $\kappa<0$. If there is some $o\in M$ such that
 $$ \mathfrak{L}_{\mathfrak{m}}(o)=\sup_{x\in M}  \mathfrak{L}_{\mathfrak{m}}(x), \ \ \ \ m(B_r^+(o))=\chi(r) \mathfrak{L}_{\mathfrak{m}}(o),$$ where $r=d_F(o, \cdot)$,  then $\mathfrak m=\mathfrak m_{BH}$, $\mathbf K\equiv \kappa$ and $\mathbf S\equiv (n-1)h$. In this case, $m(B_{ r}^+(x))=\mathcal{V}_{o,\kappa, h, n}( r)=\chi( r) \mathfrak{L}_{\mathfrak{m}}(o)$ only depending on the radius $r$ for any $x\in M$. \end{cor}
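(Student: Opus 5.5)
The proof will mirror that of Corollary \ref{cor31}, with every inequality reversed, using Theorem \ref{thm22} in place of Theorem \ref{thm21}. As there, the case $h=\kappa=0$ is settled in \cite{HKZ}. So we may assume either $h>0$ (when $\kappa\geq 0$) or $h>\sqrt{-\kappa}>0$ (when $\kappa<0$). In these cases $\chi(\infty)=\int_0^{+\infty}(e^{-ht}\mathfrak s_\kappa(t))^{n-1}dt<+\infty$. Moreover $\int_{\tilde r}^{\tilde r+c}(e^{-ht}\mathfrak s_\kappa(t))^{n-1}dt\to 0$ as $\tilde r\to+\infty$ for every fixed $c\geq 0$. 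When $\kappa>0$ we work on $(0,\pi/\sqrt\kappa)$ and use the convention of Theorem \ref{thm22}; the limit argument below is then taken as $\tilde r\uparrow \pi/\sqrt{\kappa}$. I expect this to need a short separate remark, because under $\mathbf{Ric}\geq(n-1)\kappa>0$ Myers' theorem makes $M$ bounded.

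Fix $x\in M$ and set $\tilde r=r_x=d_F(x,\cdot)$. The key reversal is to compare with balls about $o$ from the inside: $B^+_{\tilde r}(o)\subset B^+_{\tilde r+d_F(o,x)}(x)$ by the triangle inequality $d_F(x,z)\leq d_F(x,o)+d_F(o,z)$. Here one must be careful with nonreversibility, since the relevant shift is $d_F(x,o)$. Theorem \ref{thm22} at $x$ says that $m(B^+_{\tilde r}(x))/\chi(\tilde r)$ is non-increasing and bounded above by $\mathfrak L_{\mathfrak m}(x)$. Write $c:=d_F(x,o)$. The chain is
\begin{equation*}
\mathfrak L_{\mathfrak m}(x)\geq \frac{m(B^+_{\tilde r+c}(x))}{\chi(\tilde r+c)}\geq \frac{m(B^+_{\tilde r}(o))}{\chi(\tilde r+c)}=\mathfrak L_{\mathfrak m}(o)\frac{\chi(\tilde r)}{\chi(\tilde r+c)}.
\end{equation*}
Letting $\tilde r\to\infty$ makes the ratio tend to $1$, which gives $\mathfrak L_{\mathfrak m}(x)\geq\mathfrak L_{\mathfrak m}(o)\geq \mathfrak L_{\mathfrak m}(x)$, the last step by the sup hypothesis. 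Hence all the quantities coincide. By monotonicity, $f_2$ at $x$ is squeezed: it is non-increasing, its initial limit is at most $1$ and its limit at infinity equals $1$. So $f_2\equiv 1$, i.e.\ $m(B^+_{\tilde r}(x))=\chi(\tilde r)\mathfrak L_{\mathfrak m}(o)$ for all $\tilde r$.

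For the conclusion, Proposition \ref{prop21} applies since $\mathfrak L_{\mathfrak m}$ is constant, giving $\mathfrak m=\mathfrak m_{BH}$. The equality case of Theorem \ref{thm22} gives $\mathbf K(\nabla r_x,\cdot)=\kappa$ and $\mathbf S(\nabla r_x)=(n-1)h$. Then, exactly as at the end of Corollary \ref{cor31}, we take a normal minimal geodesic from $x$ in direction $y$, note $\dot\gamma=\nabla r_x$, and let $t\to0$. This yields $\mathbf K(x,y)=\kappa$ and $\mathbf S(x,y)=(n-1)h$ for all $(x,y)$. The main obstacle is the limiting step: $\lim\chi(\tilde r)/\chi(\tilde r+c)=1$ requires $\chi(\infty)<\infty$ with $\chi$ positive, which is exactly where the hypothesis on $\kappa,h$ enters. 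In the $\kappa>0$ case I would instead compare at $\tilde r$ near $\pi/\sqrt\kappa$, using the positivity of $\chi$ there.
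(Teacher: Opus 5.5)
Your argument is essentially the paper's proof. The paper uses $B^+_{\tilde r}(x)\supset B^+_{\tilde r-d_F(x,o)}(o)$ with a $\liminf$, which is your shifted inclusion with $c=d_F(x,o)$. It then uses, as you do, $\chi(\infty)<\infty$, the sup hypothesis, Proposition \ref{prop21}, the equality case of Theorem \ref{thm22}, and the same $t\to0$ limit along geodesics from $x$. Your explicit ``$f_2$ is squeezed'' step is just a cleaner way to extract equality at every radius.

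One caveat concerns your remark on $\kappa>0$: comparing near $\pi/\sqrt{\kappa}$ does not work as you describe. With $c=d_F(x,o)>0$ you need $\tilde r+c\leq\pi/\sqrt{\kappa}$ for the comparison to apply. Then the ratio $\chi(\tilde r)/\chi(\tilde r+c)$ is at most $\chi(\pi/\sqrt{\kappa}-c)/\chi(\pi/\sqrt{\kappa})<1$, so the sandwich does not close.

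No such argument is needed, however. If $\kappa>0$, Myers' theorem makes $M$ compact, so $\tau$ is bounded on the compact unit sphere bundle. On the other hand, $\mathbf S\geq(n-1)h>0$ forces $\tau(\gamma(t),\dot\gamma(t))\geq\tau(\gamma(0),\dot\gamma(0))+(n-1)ht\to\infty$ along every unit-speed forward geodesic. Hence that case of the corollary is vacuous. Alternatively, you could compare total volumes, $m(M)=\chi(\pi/\sqrt{\kappa})\,\mathfrak L_{\mathfrak m}(o)\leq\chi(\pi/\sqrt{\kappa})\,\mathfrak L_{\mathfrak m}(x)$, instead of shifted balls. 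The paper itself does not treat $\kappa>0$ separately.
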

\begin{proof} The proof is similar to that of Corollary \ref{cor31}. Since Corollary \ref{cor32} was obtained in \cite{HKZ} in the case when $h=\kappa=0$, it suffices to consider the cases when $h>0$ if $\kappa\geq 0$, and $h>\sqrt{-\kappa}>0$ if $\kappa<0$. In theses cases, we first remark that $\lim_{r\rightarrow +\infty}\chi(r)<+\infty$ as in the proof of Corollary \ref{cor31}.
Fixed $x\in M$ but arbitrarily, let $\tilde r:=r_x=d_F(x, \cdot)$. Then $m(B_{\tilde r}^+(x))\leq \chi(\tilde r) \mathfrak{L}_{\mathfrak{m}}(x)$ and $f_2(\tilde r)=\frac{m(B^+_{\tilde r}(x))}{\chi(\tilde r)}$ is non-increasing by Theorem \ref{thm22}. Since $B_{\tilde r}^+(x)\supset B^+_{\tilde r-d_F(x, o)}(o)$ for sufficiently large $\tilde r>0$, we have
\beq  \mathfrak{L}_{\mathfrak{m}}(x)&\geq &\frac{m(B_{\tilde r}^+(x))}{\chi(\tilde r)} \geq \liminf_{\tilde r\rightarrow +\infty}\frac{m(B_{\tilde r}^+(x))}{\chi(\tilde r)}\nonumber \\
& \geq & \liminf_{\tilde r\rightarrow +\infty}\frac{m(B^+_{\tilde r-d_F(x, o)})}{\chi(\tilde r)}\nonumber \\
&=& \liminf_{\tilde r\rightarrow +\infty}\left(\frac{ m(B^+_{\tilde r-d_F(x, o)}(o))}{\chi(\tilde r-d_F(x, o))}\cdot \frac{\chi(\tilde r-d_F(x, o))}{\chi(\tilde r)}\right)\nonumber \\
&=& \mathfrak{L}_{\mathfrak{m}}(o)\cdot \liminf_{\tilde r\rightarrow +\infty}\left(\frac{\chi(\tilde r)+\int_{\tilde r}^{\tilde r-d_F(x, o)}(e^{-ht}\mathfrak s_{\kappa}(t))^{n-1}dt}{\chi(\tilde r)}\right), \nonumber \\
&=& \mathfrak{L}_{\mathfrak{m}}(o)\geq  \mathfrak{L}_{\mathfrak{m}}(x),\label{L(x)*} \eeq
where we used  $\lim_{\tilde r\rightarrow +\infty}\int_{\tilde r}^{\tilde r-d_F(x, o)}(e^{-ht}\mathfrak s_{\kappa}(t))^{n-1}dt=0$ as in the proof of Corollary \ref{cor31}.
(\ref{L(x)*}) implies that $ \mathfrak{L}_{\mathfrak{m}}(x)= \mathfrak{L}_{\mathfrak{m}}(o)$ and $m(B_{\tilde r}^+(x))=\chi(\tilde r) \mathfrak{L}_m(x)=\mathcal{V}_{x,\kappa, h, n}(\tilde r)$ for any $x\in M$. By Proposition \ref{prop21} and Theorem \ref{thm22}, we have $\mathfrak m=\mathfrak m_{BH}$, $\mathbf {K}(\nabla \tilde r)=\kappa$ and $\mathbf S( \nabla \tilde r)=(n-1)h$. In this case, $m(B_{ r}^+(x))=\chi(\tilde r) \mathfrak{L}_{\mathfrak{m}}(o)$ only depending on $\tilde r$ for any $x\in M$.  The rest follows from  the proof of Corollary \ref{cor31}.
\end{proof}

The following equivalence follows directly from  Theorem \ref{thm21} or Theorem \ref{thm22}.
\begin{cor}\label{cor33} Under the same assumptions as in Theorem \ref{thm21} or Theorem \ref{thm22}, the following statements are equivalent.

{\rm {(1)}} $\Delta r =(n-1)\big(\mathfrak{ct}_\kappa(r)-h\big)$ for $0 < r\leq \min\{\mathfrak{i}_{o}, \pi/\sqrt{\kappa}\}$.

{\rm{(2)}} $m(B_{r}^+(o))=\mathcal{V}_{o,\kappa, h, n}(r)$.

{\rm{(3)}} $\mathbf {K}(\nabla r)=\kappa$ and $\mathbf S(\nabla r)=(n-1)h$.
\end{cor}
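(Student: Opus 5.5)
The plan is to obtain Corollary \ref{cor33} by chaining the equality characterizations already established in Theorem \ref{thm21} (under $\mathbf K(\nabla r,\cdot)\leq\kappa$, $\mathbf S(\nabla r)\leq (n-1)h$) and in Theorem \ref{thm22} (under $\mathbf{Ric}(\nabla r)\geq (n-1)\kappa$, $\mathbf S(\nabla r)\geq (n-1)h$). The two cases run in parallel, so I treat the setting of Theorem \ref{thm21}; the other follows by reversing all inequalities.

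\textbf{(1)$\Leftrightarrow$(3).} This is exactly the first claim of Theorem \ref{thm21}: equality in (\ref{Delta-r1}) holds if and only if (\ref{KS}) holds, i.e. $\mathbf K(\nabla r,\cdot)=\kappa$ and $\mathbf S(\nabla r)=(n-1)h$. Statement (3) is just (\ref{KS}) written without the second slot.

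\textbf{(2)$\Leftrightarrow$(3).} This is the volume part of Theorem \ref{thm21}: $f_1(r)\geq 1$, with $f_1(r)=1$, i.e. $m(B_r^+(o))=\mathcal V_{o,\kappa,h,n}(r)$, if and only if (\ref{KS}) holds.

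The only point needing care is the range of $r$. Statement (2) has to be read on the interval $0<r\leq\min\{\mathfrak i_o,\pi/\sqrt\kappa\}$ used in (1). Equality of the volumes on that whole interval is what allows one to differentiate via (\ref{mv-der}). This yields $\hat\sigma_o=e^{-\tau}(e^{-hr}\mathfrak s_\kappa)^{n-1}$ from the pointwise lower bound (\ref{hat-sigma*}) together with the vanishing integral of a nonnegative difference. One then recovers equality in (\ref{sigma-s}), hence in (\ref{Delta-r1}), and so (1) and (3), exactly as in the proof of Theorem \ref{thm21}. I would state this implicitly in the corollary's reading, and I expect no real obstacle since each arrow is already proved in Theorems \ref{thm21}--\ref{thm22}.
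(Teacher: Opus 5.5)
Your proposal is correct and follows the paper's route: the paper proves Corollary \ref{cor33} by reading off the equality cases of Theorem \ref{thm21} (respectively Theorem \ref{thm22}). There, equality in the Laplacian comparison and $f_1(r)=1$ (respectively $f_2(r)=1$) are each shown to be equivalent to (\ref{KS}). Your reading of (2) as equality for all $0<r\leq\min\{\mathfrak i_o,\pi/\sqrt{\kappa}\}$, so that one can differentiate via (\ref{mv-der}), matches how the converse is argued in the proof of Theorem \ref{thm21}.
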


\section{Generic functional inequalities}
In this section, we establish two generic functional inequalities on a Finsler measure space $(M, F, \mathfrak{m})$ by the convexity of $F$ inspired from the arguments in  Riemannian setting (\cite{KKPZ}). Further, we give equivalent characterizations for their sharpness. For simplicity, we state our results for $u\in C_0^\infty(M)$. Its extension to a more general class of functions, e.g. $\mathcal W_0^{1, p}(M)$ or $W_0^{1, p}(M)$, is standard.

\begin{prop}\label{prop31}
Let $(M, F, \mathfrak{m})$ be an $n$-dimensional forward complete Finsler measure space. For any domain $\Omega$ in $M$ and a positive function $\rho\in \mathcal W_{loc}^{1, p}(\Omega)$ (resp. $W_0^{1, p}(\Omega)$)  with $F^{\ast}(d\rho)=1$ a.e on $\Omega\subset M$, let $G: (0, \sup_{\Omega}\rho) \rightarrow \mathbb{R}$ and $H: \mathbb{R} \rightarrow \mathbb{R}$ be $C^1$ functions  such that $G(\rho)\in L^{p'}_{loc}(\Omega)$, $G'(\rho)\in L^{1}_{loc}(\Omega)$ and $H(0)=H'(0)=0$, where $p'=\frac p{p-1}$ and $p>1$. Then the following inequalities hold.

(1) For any $u\in C^{\infty}_0(\Omega)$, we have
\beq
\int_{\Omega}\max\left\{F^{\ast p}(\pm du)\right\}d\mathfrak{m}\geq p\int_{\Omega}\left\{G'(\rho)+G(\rho)\Delta \rho\right\}H(u)d\mathfrak{m}-(p-1)\int_{\Omega}|G(\rho)H'(u)|^{p'}d\mathfrak{m}.\label{3.1}\eeq

(2) For any $u\in C^{\infty}_0(\Omega)$, we have
\beq
	\int_{\Omega}\max\left\{F^{\ast p}(\pm du)\right\}d\mathfrak{m} \geq \frac{\left|\int_{\Omega}\left\{G'(\rho)+G(\rho)\Delta \rho\right\}H(u)d\mathfrak{m}\right|^p}{\big(\int_{\Omega}|G(\rho)H'(u)|^{p'}d\mathfrak{m}\big)^{p-1}}.\label{3.2}
\eeq
provided that there exists a neighborhood $I\subset \mathbb{R}$ of zero such that $H'(s)\neq 0$ for any $s\in I\backslash\{0\}$ and $G(t)\neq 0$ for all $t\in (0, \sup_{\Omega}\rho)$. Further, the equality in (\ref{3.1}) or (\ref{3.2}) holds if and only if
\beq du=-{\rm{sgn}}(G(\rho)H'(u))|G(\rho)H'(u)|^{\frac 1{p-1}}d\rho, \ \ \  \max\left\{F^{\ast}(\pm d\rho)\right\}= F^{\ast}(d\rho) \ \ \ {\rm{on}}\ \Omega.
\label{u-rho} \eeq
 \end{prop}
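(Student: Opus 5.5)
The plan is to derive both inequalities from an integration by parts in the divergence identity, followed by Young's (respectively Hölder's) inequality and the Cauchy--Schwarz-type bound $\xi(y)\le F^{\ast}(\xi)F(y)$ for the pairing of a covector with a vector.

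\emph{Step 1: the integral identity.} Set $V:=G(\rho)\nabla\rho$, a vector field on $\Omega$ that is weakly differentiable where needed. By (\ref{fV}) and the weak definition of the Laplacian (\ref{Laplacian}), we have ${\rm div}(G(\rho)\nabla\rho)=G'(\rho)\,d\rho(\nabla\rho)+G(\rho)\Delta\rho$. Since $d\rho(\nabla\rho)=F^{\ast 2}(d\rho)=1$ a.e. by (\ref{FF*}), this equals $G'(\rho)+G(\rho)\Delta\rho$. Testing against $H(u)$, which is admissible because $H(0)=0$ gives $H(u)$ compact support in $\Omega$, and using the local integrability hypotheses on $G(\rho)$ and $G'(\rho)$, we get
\[
\int_\Omega\{G'(\rho)+G(\rho)\Delta\rho\}H(u)\,d\mathfrak m=-\int_\Omega G(\rho)H'(u)\,du(\nabla\rho)\,d\mathfrak m .
\]
To justify this near the singular set of $\rho$, I would approximate and argue in the distributional sense. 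This justification is the main technical obstacle; I expect to handle it by cutting off near $\{\rho=0\}$ and using $G(\rho)\in L^{p'}_{loc}$.

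\emph{Step 2: the pointwise bound.} Write $w:=G(\rho)H'(u)$. Then
\[
-w\,du(\nabla\rho)=|w|\,\big(-{\rm sgn}(w)du\big)(\nabla\rho)\le |w|\,F^{\ast}(-{\rm sgn}(w)du)\,F(\nabla\rho)\le |w|\max\{F^{\ast}(\pm du)\},
\]
using $F(\nabla\rho)=F^{\ast}(d\rho)=1$.
For (\ref{3.1}), I apply Young's inequality $ab\le \frac1p a^p+\frac1{p'}b^{p'}$ with $a=\max F^{\ast}(\pm du)$ and $b=|w|$, then multiply by $p$; this gives (\ref{3.1}) directly.
For (\ref{3.2}), I instead apply Hölder to obtain $\big|\int\{\cdots\}H(u)\big|\le \|\max F^{\ast}(\pm du)\|_{L^p}\,\|w\|_{L^{p'}}$, and raise this to the power $p$. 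The same bound holds for the negative of the integral by replacing $u$ with $-u$, or equivalently by symmetry in the sign. The extra hypotheses on $H'$ and $G$ guarantee $\|w\|_{L^{p'}}>0$ for $u\neq0$, so the division is legitimate. Alternatively, (\ref{3.2}) follows from (\ref{3.1}) by replacing $u$ with $\lambda u$ and optimizing over $\lambda$, but the Hölder route is cleaner.

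\emph{Step 3: equality.} Equality requires three things:
\begin{itemize}
\item equality in the duality bound, i.e.\ $-{\rm sgn}(w)du=c\,\mathcal L(\nabla\rho)=c\,d\rho$ with $c\ge0$;
\item the maximum $\max\{F^{\ast}(\pm du)\}$ being attained by $-{\rm sgn}(w)du$, which translates to $\max F^{\ast}(\pm d\rho)=F^{\ast}(d\rho)$;
\item equality in Young's inequality, $a^p=b^{p'}$, i.e.\ $c=|w|^{1/(p-1)}$. For Hölder, proportionality up to a constant, which can be absorbed by the scaling of $H$ and $G$ as in the statement.
\end{itemize}
Together these give (\ref{u-rho}). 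Conversely, substituting (\ref{u-rho}) back into the chain makes each step an equality.
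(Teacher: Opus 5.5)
Your proof is correct and is essentially the paper's argument unpacked into elementary steps. The paper starts from the tangent-plane inequality for the convex map $\xi\mapsto F^{\ast p}(\xi)$,
\[
F^{\ast p}(\xi)\geq pF^{\ast (p-2)}(\eta)\langle\xi,\mathcal L^{\ast}(\eta)\rangle-(p-1)F^{\ast p}(\eta),
\]
with $\xi=-\mathrm{sgn}(G(\rho)H'(u))\,du$ and $\eta=|G(\rho)H'(u)|^{\frac1{p-1}}d\rho$. It then integrates and integrates by parts. That single inequality is exactly your duality bound $\langle\xi,\mathcal L^{\ast}(\eta)\rangle\le F^{\ast}(\xi)F^{\ast}(\eta)$ followed by Young. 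Its equality condition $\xi=\eta$ is your first and third bullets combined. So for (\ref{3.1}) and its equality case the two proofs coincide.

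For (\ref{3.2}), the paper replaces $H$ by $cH$ and maximizes $pcI_H-(p-1)|c|^{p'}J_H$ over $c$, which is the usual derivation of H\"older from Young. Your direct use of H\"older is shorter.

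Two small corrections. First, replacing $u$ by $-u$ does not handle the absolute value, since $H$ has no parity. Use instead the pointwise bound $|w\,du(\nabla\rho)|\le|w|\max\{F^{\ast}(\pm du)\}$ (or replace $H$ by $-H$), as your ``symmetry'' alternative indicates. Second, Step~1 needs no cut-off: $u$ is compactly supported in $\Omega$, where $\rho>0$, and the paper simply integrates by parts via (\ref{fV}). The singular point $o$ only enters in Theorem~\ref{thm11}, where it is handled by the capacity argument of Propositions~\ref{prop32}--\ref{prop33}.

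The one step that does not hold as written is the equality case of (\ref{3.2}). Equality in H\"older gives $\max\{F^{\ast p}(\pm du)\}=\mu|G(\rho)H'(u)|^{p'}$ a.e.\ for some constant $\mu>0$. Also, because of the absolute value, either your chain or its mirror image may be the sharp one. What you actually obtain is therefore: equality in (\ref{3.2}) holds iff (\ref{u-rho}) holds with $H$ replaced by $\lambda H$ for some constant $\lambda\neq0$.

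The constant cannot be ``absorbed by the scaling of $H$ and $G$'', because $G$ and $H$ are fixed. In fact the literal claim cannot be proved: (\ref{3.2}) is invariant under $H\mapsto\lambda H$, while (\ref{u-rho}) is not. The paper's proof has the same looseness, since its optimal $c=J_H^{1-p}|I_H|^{p-2}I_H$ need not equal $1$. You can see this in the proof of Proposition~\ref{prop41}: the test function $u=e^{-\frac1p r^{\beta}}$ produces exactly the ratio $\frac{n+\alpha-1}{p}$, i.e.\ equality, yet it has the form $ce^{-\frac1\beta r^{\beta}}$ only when $\beta=p$. State the equality case of (\ref{3.2}) in this scaled form rather than appealing to rescaling.
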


\begin{proof} (1) The convexity of the map $\xi\mapsto F^{{\ast}p}(\xi)$ implies
\beqn
F^{{\ast}p}(\xi) \geq F^{\ast p}(\eta)+\big\langle\xi-\eta, \ \frac{\partial}{\partial \eta} F^{\ast p}(\eta)\big\rangle  \label{F*-xi-eta}
\eeqn for any $\xi, \eta\in \Gamma(T^{\ast}\Omega)$ (i.e., the set of smooth sections on $T^{\ast}\Omega$) and the equality holds if and only if $\xi=\eta$. By (\ref{FF*})--(\ref{F*-xi}), one obtains
\beq
F^{\ast p}(\xi) \geq p F^{\ast (p-2)}(\eta)\langle\xi, \mathcal L^{\ast}(\eta)\rangle-(p-1) F^{\ast p}(\eta).\label{F*-xi-eta}
\eeq
Let
\beqn
\xi: =-\operatorname{sgn}\big(G(\rho)H'(u)\big)du,  \qquad  \eta: = |G(\rho)H'(u)|^{\frac{1}{p-1}}d\rho \eeqn for any $u\in C_0^\infty(\Omega)$.
Then
\beqn & p F^{\ast (p-2)}(\eta)=p |G(\rho)H'(u)|^{\frac{p-2}{p-1}}F^{\ast (p-2)}(d\rho),\\
& (p-1) F^{\ast p}(\eta)=(p-1) |G(\rho)H'(u)|^{p^{\prime}}F^{\ast p}(d \rho).\eeqn
Since  $\mathcal L^{\ast}(d\rho)=\nabla \rho$, we get
$$\langle\xi, \mathcal L^{\ast}(\eta)\rangle=-|G(\rho)H'(u)|^{\frac{2-p}{p-1}}G(\rho)H'(u) du(\nabla\rho).$$
% \beqn pF^{{\ast}(p-2)}(\eta)\langle\xi,\ell^{\ast}(\eta)\rangle=-pG(\rho)F^{{\ast}(p-2)}(d\rho) dH(u)(\nabla\rho).\eeqn
Plugging these in (\ref{F*-xi-eta}) yields
$$ F^{\ast p}\big(-{\rm{sgn}}\big(G(\rho)H'(u)\big)du\big)\geq-pG(\rho)F^{\ast (p-2)}(d\rho)d(H(u))(\nabla\rho)-(p-1)|G(\rho)H'(u)|^{p'}F^{\ast p}(d\rho).$$
Note that $F^{\ast}(d\rho)=1$ a.e. on $\Omega$ by the assumption. Integrating the above inequality over $\Omega$ yields
\beqn
\int_{\Omega}F^{\ast p}\big(-\operatorname{sgn}\big(G(\rho)H'(u)\big)du\big)d\mathfrak{m}&\geq&-p\int_{\Omega}G(\rho)d(H(u))(\nabla\rho)d\mathfrak{m}\\
& &-(p-1)\int_{\Omega}|G(\rho)H'(u)|^{p'}d\mathfrak{m}\nonumber \\
&: =& pI_H-(p-1)J_H. \eeqn
Since  $ \max\left\{F^{\ast}(\pm du)\right\}\geq F^{\ast}\big(-\operatorname{sgn}\big(G(\rho)H'(u)\big)du\big)$, we have
\beq\label{3.8}
\int_{\Omega}\max\{F^{{\ast}p}(\pm du)\}d\mathfrak{m}\geq p I_H -(p-1)J_H.
\eeq
Since $u\in C_0^\infty(\Omega)$ and $H\in C^1(\mathbb R)$ with $H(0)=0$,  it follows from an integration by parts and (\ref{fV}) that
\beqn
I_H=-\int_{\Omega} d\big(H(u)\big) \big(G(\rho)\nabla\rho\big) d\mathfrak{m}=\int_{\Omega} \big\{G'(\rho)+G(\rho)\Delta\rho\big\}H(u)d\mathfrak{m}.
\eeqn
Thus $(\ref{3.1})$ follows. The equality in (\ref{3.1}) holds if and only if all inequalities in previous proof become equalities, i.e.,
\beq \xi=\eta, \ \ \ \ \  \max\left\{F^{\ast}(\pm du)\right\}=F^{\ast}\big(-\operatorname{sgn}\big(G(\rho)H'(u)\big)du\big).\label{xi-eta-rho}\eeq  (\ref{xi-eta-rho})$_1$ is exactly (\ref{u-rho})$_1$ by definitions of $\xi$ and $\eta$. Plugging (\ref{u-rho})$_1$ in (\ref{xi-eta-rho})$_2$ yields (\ref{u-rho})$_2$. Conversely, if (\ref{u-rho}) holds, then (\ref{xi-eta-rho}) holds obviously. Thus  the equality in (\ref{3.1}) holds if and only if (\ref{u-rho}) holds.

(2) Observe that $(\ref{3.8})$ is valid for $cH$ instead of $H$ for every $c\in \mathbb{R}\backslash\{0\}$. Consequently,
\beqn
\int_{\Omega}\max\{F^{{\ast}p}(\pm Du)\}d\mathfrak{m}\geq p I_{cH} -(p-1)J_{cH}=pcI_H-(p-1)|c|^{p'}J_H.
\eeqn
Since $H'(s)\neq 0$ on $I\backslash\{0\}$ and $G(t)\neq 0$ on $(0, \sup_{\Omega}\rho)$, we have $J_H>0$. We can maximize the right hand side of the latter expression with respect to $c$. To keep the sign of $c$  consistent with that of $I_H$, let $f(t)=p|I_H| t-(p-1)J_H|t|^{p'}$,  where $t=|c|=\text{sgn}(I_H)c>0$. Consequently $f'(t)=p|I_H|-p J_H t^{p'-1}$ and $f''(t)=-p(p'-1)J_H t^{p'-2}<0$. The maximum of $f$ is achieved at $t=J_H^{1-p}|I_H|^{p-1}$, i.e., $c=J_H^{1-p}|I_H|^{p-2}I_H$. Hence
$$ \int_{\Omega}\max\{F^{{\ast}p}(\pm du)\}d\mathfrak{m}\geq \frac{|I_H|^{p}}{J_H^{p-1}}.$$ Thus, (\ref{3.2}) follows.
 Since the equality of  (\ref{3.2}) follows from that of (\ref{3.1}), the equality in (\ref{3.2}) holds if and only if (\ref{u-rho}) holds.
This finishes the proof.\end{proof}

To extend the integrals over $\Omega$ in (\ref{3.1})--(\ref{3.2}) to those over $M$, let us introduce the capacity for certain small subset of $M$ connected with the Sobolev space $\mathcal W^{1, p}(M)$ (resp. $W^{1, p}(M)$) (cf. \cite{EG}, \cite{HKM}). Since the arguments are similar, we only consider the Sobolv space $W^{1, p}(M)$ instead of $\mathcal W^{1, p}(M)$. In fact, the following definition of $p$-capacity also works and the corresponding results are also true for $\mathcal W^{1, p}(M)$.

 For any $p>1$ and a subset $\mathcal A\subset M$, define the {\it $p$-capacity} of $\mathcal A$ by ${\rm {Cap}}_p(\mathcal A)=\inf_{u\in \mathcal C(\mathcal A)}\|u\|_{1, p},$ where $\mathcal C(\mathcal A)=\left\{u\in W^{1, p}(M)| \mathcal A\subset \{u\geq 1\}^o\right\}$, here $\{u\geq 1\}^o$ means that the interior of the set $\{x\in M|u(x)\geq 1\}$ (cf. \cite{EG}).  It has an equivalent definition as follows.
\begin{prop} \label{prop32} For any set $\mathcal A\subset M$,
$${\rm {Cap}}_p(\mathcal A)=\inf_{u\in \mathcal C'(\mathcal A)}\|u\|_{1, p},$$
where $\mathcal C'(\mathcal A)=\left\{u\in W^{1, p}(M)|0\leq u\leq 1\  {\rm {and}}\ \mathcal A\subset \{u=1\}^o\right\}$. In particular, {\rm {Cap}}$_p(r_o^{-1}(0))=0$ if $1<p<n$, where $r_o=d_F(o, \cdot)$ is the distance function from some $o\in M$. \end{prop}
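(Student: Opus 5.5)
We have two claims to prove: the equivalent description of ${\rm Cap}_p(\mathcal A)$ via the class $\mathcal C'(\mathcal A)$, and the vanishing of ${\rm Cap}_p(\{o\})$ for $1<p<n$.

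For the equivalence, the inclusion $\mathcal C'(\mathcal A)\subset\mathcal C(\mathcal A)$ gives ${\rm Cap}_p(\mathcal A)\le\inf_{\mathcal C'}\|u\|_{1,p}$ at once. For the reverse inequality, take any $u\in\mathcal C(\mathcal A)$ and truncate it to $v:=\min\{\max\{u,0\},1\}$.
\begin{itemize}
\item Truncation preserves $W^{1,p}(M)$, and $dv=du\,\chi_{\{0<u<1\}}$ a.e.
\item Hence $F^{\ast}(\pm dv)\le F^{\ast}(\pm du)$ pointwise a.e., and $|v|\le|u|$.
\item Therefore $\|v\|_{1,p}\le\|u\|_{1,p}$, for the Sobolev norm with both $F^\ast(du)$ and $F^\ast(-du)$, and equally for the one-sided pseudo-norm.
\item Since $\{u\ge1\}\subset\{v=1\}$, we get $\mathcal A\subset\{v=1\}^o$, so $v\in\mathcal C'(\mathcal A)$.
\end{itemize}
Taking infima over $u$ gives equality. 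Truncation commutes with the Lipschitz-type operations here because $F^\ast$ is positively homogeneous and the chain rule holds a.e. for Lipschitz compositions.

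For the vanishing statement, note that $r_o^{-1}(0)=\{o\}$. We construct test functions localized at $o$ whose $W^{1,p}$ norm tends to zero, and we work in polar coordinates $(r,y)$ around $o$ for small $r<\mathfrak i_o$. Fix $\varepsilon$ small and set
\[
u_\varepsilon:=\varphi_\varepsilon(r_o), \qquad \varphi_\varepsilon(t)=\begin{cases}1, & t\le \varepsilon^2,\\ \log(\varepsilon/t)/\log(1/\varepsilon), & \varepsilon^2<t<\varepsilon,\\ 0, & t\ge\varepsilon,\end{cases}
\]
or, more simply since $p<n$, the linear cutoff equal to $1$ on $[0,\varepsilon]$, with slope $-1/\varepsilon$ on $[\varepsilon,2\varepsilon]$, and $0$ afterwards. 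This $u_\varepsilon$ is Lipschitz, satisfies $0\le u_\varepsilon\le1$, equals $1$ on the open ball $B^+_\varepsilon(o)\ni o$, and has compact support. By the chain rule and $F^\ast(dr_o)=1$, we get $F^\ast(\pm du_\varepsilon)=|\varphi_\varepsilon'(r_o)|F^\ast(\pm dr_o)\le \Lambda\varepsilon^{-1}$ on the annulus. Here $\Lambda$ may be replaced by its local bound near $o$, which is finite by continuity.

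The volume estimate comes from (\ref{dm-polar}) and (\ref{sigma-limit}): $\hat\sigma_o(r,y)\le C r^{n-1}$ for small $r$, uniformly in $y$ by smoothness and compactness of $S_oM$. Thus $m(B^+_{2\varepsilon}(o))\le C\varepsilon^n$, which gives
\[
\|u_\varepsilon\|_{1,p}\le C\varepsilon^{n/p}+C\Lambda\varepsilon^{-1}\varepsilon^{n/p}=O(\varepsilon^{(n-p)/p})\to0,
\]
using $p<n$. Hence ${\rm Cap}_p(\{o\})=0$.

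The main delicate point is the uniform bound on $\hat\sigma_o$ near $o$ together with the finiteness of $F^\ast(-dr_o)$ near $o$. Both follow from the local smoothness of $F$ and $\mathfrak m$ on a compact neighborhood of $o$, where $F$ is uniformly comparable to a Euclidean norm.
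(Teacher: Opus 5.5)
Your proposal is correct and follows essentially the same route as the paper. For the equality of the two infima, both use the truncation $v=\min\{\max\{u,0\},1\}$, which does not increase $|u|$, $F^\ast(du)$ or $F^\ast(-du)$ and satisfies $\{u\ge 1\}\subset\{v=1\}$. For $\mathrm{Cap}_p(\{o\})=0$, both use a linear radial cutoff between radii $\varepsilon$ and $2\varepsilon$ (the paper takes $\delta/2$ and $\delta$), together with the small-ball volume bound from (\ref{sigma-limit}) and the local reversibility constant, to get $\|u_\varepsilon\|_{1,p}=O(\varepsilon^{(n-p)/p})$.

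One harmless slip: since $\varphi_\varepsilon'\le 0$, you actually have $F^\ast(\pm du_\varepsilon)=|\varphi_\varepsilon'(r_o)|\,F^\ast(\mp dr_o)$ rather than with $\pm$. This does not affect your bound $\Lambda\varepsilon^{-1}$.
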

\begin{proof}  Since $\mathcal C'(\mathcal A)\subset \mathcal C(\mathcal A)$, we have ${\rm {Cap}}_p(\mathcal A)\leq \inf_{u\in \mathcal C'(\mathcal A)}\|u\|_{1, p}$.
On the other hand, for any $\eta>0$, there exists a function $\bar u\in \mathcal C(\mathcal A)$ such that $\|\bar u\|_{1, p}<{\rm {Cap}}_p(\mathcal A)+\eta$ by definition. Set $v: =\max\{0, \min\{\bar u, 1\}\}$. Then $|v|\leq |\bar u|$ and $0\leq v\leq 1$. Moreover, by Lemma 7.6 in \cite{GT} (also see the proof of Theorem 1.1 in \cite{ZX}), we have $dv=0$ on $\{\bar u\geq 1 \ {\rm{or}}\ \bar u\leq 0\}$ and $dv=d\bar u$ on $\{0\leq \bar u\leq 1\}$ in the weak sense. Thus, $v\in W^{1, p}(M)$. Since $\bar u\in \mathcal C(\mathcal A)$, we have $\mathcal A\subset \{\bar u\geq 1\}^o$, which implies that $\mathcal A\subset \{v=1\}^o$. Consequently, $v\in \mathcal C'(\mathcal A)$ and
$${\rm {Cap}}_p(\mathcal A)\leq \inf_{u\in \mathcal C'(\mathcal A)}\|u\|_{1, p}\leq \|v\|_{1, p}\leq  \|\bar u\|_{1, p}<{\rm {Cap}}_p(\mathcal A)+\eta, $$ which implies that ${\rm {Cap}}_p(\mathcal A)=\inf_{u\in \mathcal C'(\mathcal A)}\|u\|_{1, p}$ by the arbitrariness of $\eta$.

In particular, let $\mathcal A=r_o^{-1}(0)$.  Then $\mathcal A=\{0\}$. For any $0<\epsilon<\delta$, consider a $C^\infty$ function defined by
\beqn u(x) =\left\{\begin{array}{lll} 1 & {\rm{on}}\ B_{\epsilon}, \\
\frac{\delta-r_o(x)}{\delta-\epsilon},  & {\rm{on}}\ B_{\delta}\backslash B_{\epsilon},\\
0, & {\rm{on}}\ M\backslash B_{\delta},\end{array}\right.\eeqn where $B_{\bullet}:=B_{\bullet}^+(o)$.
Then $0\leq u\leq 1$ and $\mathcal A \subset  B_{\epsilon}=\{u=1\}^o$. Next we prove that there is a $\delta>0$ such that $\|u\|_{1, p}$ for $u$ defined as above is finite and hence ${\rm{Cap}}_p(\mathcal A)=0$.

 In fact, for a small $0<\delta<\mathfrak i_o$, let $(r, y)$ be the polar coordinate system around $o$, where $r=r_o(\cdot )<\delta$ and $y\in S_oM$.  According to $(\ref{sigma-limit})$,  there exists an $\varepsilon=\varepsilon(\delta)>0$ such that $\lim_{\delta\rightarrow0^+}\varepsilon(\delta)=0$ and
\beq (1-\varepsilon(\delta))e^{-\tau(y)}r^{n-1} \leq \hat{\sigma}_o(r,y) \leq (1+\varepsilon(\delta))e^{-\tau(y)}r^{n-1}\label{hat-sigma-est}\eeq
for any $(r, y)\in B_{\delta}\backslash\{o\}$. From this, we have
\beqn \frac 1n (1-\varepsilon(\delta))\mathfrak{L}_{\mathfrak{m}}(o)\delta^n\leq m(B_{\delta})=\int_{B_{\delta}}d\mathfrak m=\int_0^\delta\int_{S_o(M)}\hat\sigma_o(r, y)dr d\nu_0(y)\leq \frac 1n (1+\varepsilon(\delta))\mathfrak{L}_{\mathfrak{m}}(o)\delta^n.\eeqn Moreover, $F^{\ast}(-du)=\frac{1}{\delta-\epsilon}$ and $F^{\ast}(du)= \frac{1}{\delta-\epsilon}F^{\ast}(-dr)\leq \frac{\Lambda_\delta}{\delta-\epsilon}$ on $B_\delta\backslash B_\epsilon$, and $F^{\ast}(du)=F^{\ast}(-du)=0$ elsewhere,  where $\Lambda_\delta$ is the reversibility of $F$ on $B_\delta$, which is finite due to the compactness of $\overline{B_\delta}$. Thus, one obtains
\beqn  \|u\|_{1, p} &=&\|u\|_{L^p(B_\delta)} +\|F^{\ast}(du)\|_{L^p(B_\delta)}+\|F^{\ast}(-du)\|_{L^p(B_\delta)}\nonumber \\
&\leq & \left(m(B_\delta)\right)^{1/p}+\frac {1+\Lambda_\delta}{\delta-\epsilon}\left(m(B_\delta)-m(B_\epsilon)\right)^{1/p}\nonumber \\
&\leq& \left(\frac 1n(1+\varepsilon(\delta)) \mathfrak{L}_{\mathfrak{m}}(o)\right)^{1/p}\left\{\delta^{n/p}+\frac{1+\Lambda_\delta}{\delta-\varepsilon}\left(\delta^n-\frac{1-\varepsilon(\delta)}{1+\varepsilon(\delta)}\epsilon^n\right)^{1/p}\right\}.\eeqn
Letting $\epsilon=\frac 12 \delta$ in the above inequality yields
\beq \|u\|_{1, p}\leq \left(\frac 1n(1+\varepsilon(\delta)) \mathfrak{L}_{\mathfrak{m}}(o)\right)^{1/p}\left\{\delta^{n/p}+2(1+\Lambda_\delta)\left(1-\frac {1-\varepsilon(\delta)}{2^n(1+\varepsilon(\delta))}\right)^{1/p}\delta^{\frac{n-p}p}\right\}. \label{u-sob}\eeq
Note that the sequence $\{\Lambda_\delta\}$ is increasing in $\delta$ and $\Lambda_\delta\geq 1$ by definition. Thus the limit $\lim_{\delta\rightarrow 0+}\Lambda_\delta$ exists and is greater than or equal to $1$. Since $\lim_{\delta\rightarrow 0^+}\varepsilon(\delta)=0$ and $1<p<n$, the limit of RHS in (\ref{u-sob}) goes to zero as $\delta\rightarrow 0$. This means that $\lim_{\delta\rightarrow 0^+}\|u\|_{1, p}=0$, i.e.,  for any small $0<\eta<1$, there is a small $\delta>0$ enough such that $\|u\|_{1, p}<\eta<1$. For such a $\delta$, $u\in \mathcal C'(\mathcal A)$. Hence Cap$_p(\mathcal A)=\inf_{u\in \mathcal C'(\mathcal A)}\|u\|_{1, p}=0$ by the arbitrariness of $\eta$.
\end{proof}

\begin{prop}\label{prop33} Let $(M, F, \mathfrak m)$ be an $n$-dimensional Finsler measure space and $\mathcal A\subset M$ be a compact set of zero $p$-capacity. If $1<p<n$, then $W_0^{1, p}(M)\subset W_0^{1, p}(M\backslash \mathcal A)$, that is,  every function in  $W_0^{1, p}(M)$ can be approximated by functions in $C_0^\infty(M\backslash \mathcal A)$ with respect to $\|\cdot\|_{1, p}$. \end{prop}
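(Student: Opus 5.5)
The plan is the standard cutoff argument from nonlinear potential theory (cf. \cite{HKM}): multiply an approximating smooth function by a cutoff that vanishes near $\mathcal A$ and has small $W^{1,p}$-norm. Fix $u\in W_0^{1,p}(M)$ and $\eta>0$. By definition of $W_0^{1,p}(M)$ choose $\phi\in C_0^\infty(M)$ with $\|u-\phi\|_{1,p}<\eta$. It then suffices to approximate $\phi$ by functions in $C_0^\infty(M\backslash\mathcal A)$. Since ${\rm Cap}_p(\mathcal A)=0$, Proposition \ref{prop32} gives, for each $j$, a function $v_j\in W^{1,p}(M)$ with $0\leq v_j\leq 1$, $\mathcal A\subset\{v_j=1\}^o$ and $\|v_j\|_{1,p}<1/j$. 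Set $w_j:=(1-v_j)\phi$. Then $w_j$ vanishes on the open neighbourhood $\{v_j=1\}^o$ of $\mathcal A$ and has compact support in $M$, so $w_j$ is a $W^{1,p}$ function compactly supported in $M\backslash\mathcal A$. Mollifying in local charts via a partition of unity subordinate to a cover of ${\rm supp}\,w_j$ by relatively compact sets avoiding a neighbourhood of $\mathcal A$ therefore yields elements of $C_0^\infty(M\backslash\mathcal A)$ arbitrarily close to $w_j$. On the compact set ${\rm supp}\,\phi$ the norms $F^\ast$ and $F^\ast(-\cdot)$ are uniformly equivalent to a background Riemannian norm, with constants depending only on $\Lambda$ restricted to that compact set, which is finite.

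The main estimate is $\|\phi-w_j\|_{1,p}=\|v_j\phi\|_{1,p}\to 0$. First, $\|v_j\phi\|_{L^p}\leq \sup|\phi|\,\|v_j\|_{L^p}\leq \sup|\phi|/j$. Second, $d(v_j\phi)=\phi\,dv_j+v_j\,d\phi$, so by subadditivity and positive homogeneity of $F^\ast$,
$$F^\ast(\pm d(v_j\phi))\leq |\phi|\max\{F^\ast(\pm dv_j)\}+v_j\max\{F^\ast(\pm d\phi)\}.$$
Here $|\phi|$ multiplies $dv_j$ with either sign, so both $F^\ast(dv_j)$ and $F^\ast(-dv_j)$ must be controlled. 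This is exactly why the norm of $W^{1,p}$, which contains both terms, is used. The first term is then bounded in $L^p$ by $2\sup|\phi|/j$.

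The second term is the main obstacle. It is bounded by $\sup F^\ast(\pm d\phi)\,\|v_j\|_{L^p}$, and this tends to zero because $\|v_j\|_{L^p}\leq 1/j$. Thus $\|\phi-w_j\|_{1,p}\leq C(\phi)/j$, and a diagonal choice gives an element of $C_0^\infty(M\backslash\mathcal A)$ within $2\eta$ of $u$.

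Note that $1<p<n$ is used only through the existence of capacity-zero sets, e.g.\ $r_o^{-1}(0)$ in Proposition \ref{prop32}. The only delicate technical point is checking that the product rule and truncation hold in $W^{1,p}$ in the Finsler sense. Since $\phi$ is smooth and bounded with bounded differential, this reduces to the Riemannian case on ${\rm supp}\,\phi$ via the norm equivalence above.
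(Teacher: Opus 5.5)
Your proposal is correct and follows the paper's argument. It uses the same cutoff, $(1-v_j)\phi$ built from capacity test functions, and the same two estimates: $\|v_j\phi\|_{L^p}\le\sup|\phi|\,\|v_j\|_{L^p}$, and $F^\ast(\pm d(v_j\phi))\le|\phi|\max\{F^\ast(\pm dv_j)\}+v_j\max\{F^\ast(\pm d\phi)\}$, where the two-sided $W^{1,p}$ norm is needed. The only difference is that the paper simply asserts the capacity functions $u_j$ can be taken smooth, so that $(1-u_j)\varphi\in C_0^\infty(M\backslash\mathcal A)$ at once; you instead take $v_j\in W^{1,p}(M)$ from Proposition \ref{prop32} and add a mollification step, which justifies a point the paper leaves unaddressed.
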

\begin{proof} For any $\varphi\in C_0^\infty(M)$, it suffices to prove that $\varphi\in W_0^{1, p}(M\backslash\mathcal A)$ by definition. Since Cap$_p(\mathcal A)=0$, there is a sequence $\{u_j\}_{j\geq 1}$ such that $u_j\in C^\infty(M)$, $0\leq u_j\leq 1$ and $\mathcal A\subset \{u_j=1\}^o$ for any $j\geq 1$, and $u_j\rightarrow 0$ in $W^{1, p}(M)$. For every $j\geq 1$, the function $\varphi_j:=(1-u_j)\varphi$ belongs to $C_0^\infty(M\backslash\mathcal A)\subset W_0^{1, p}(M\backslash \mathcal A)$. We shall prove that $\|\varphi_j-\varphi\|_{1, p}\rightarrow 0$ as $j\rightarrow +\infty$, which means that $\varphi\in W_0^{1, p}(M\backslash\mathcal A)$. In fact, since $\|u_j\|_{1, p}\rightarrow 0$ as $j\rightarrow \infty$, we have $\|u_j\|_{L^p(M)}\rightarrow 0$ and $\|F^{\ast}(\pm du_j)\|_{L^p(M)}\rightarrow 0$ as  $j\rightarrow \infty$. Consequently,
 \beq \|\varphi_j-\varphi\|_{L^p(M\backslash \mathcal A)}\leq \| u_j\varphi\|_{L^p(M)}\leq \|\varphi\|_{L^\infty(M)}\|u_j\|_{L^p(M)}\rightarrow 0 \ \ ({\rm{as}} \ j\rightarrow \infty), \label{Lp-1}\eeq
and
 \beqn \|F^{\ast}(d\varphi_j-d\varphi)\|_{L^p(M\backslash \mathcal A)}& \leq & \|F^{\ast}(-u_jd\varphi- \varphi du_j)\|_{L^p(M\backslash \mathcal A)}\nonumber \\
 &\leq &\|u_jF^{\ast}(-d\varphi)\|_{L^p(M\backslash \mathcal A)}+\||\varphi|F^{\ast}(-{\rm{sgn}}(\varphi) du_j)\|_{L^p(M\backslash \mathcal A)}\label{Lp-2} \\
 & \leq & \|u_jF^{\ast}(-d\varphi)\|_{L^p(M)}+\|\varphi\|_{L^\infty(M)}\|F^{\ast}(-{\rm{sgn}}(\varphi) du_j)\|_{L^p(M)}. \nonumber \eeqn
Observe that $\|F^{\ast}(-{\rm{sgn}}(\varphi) du_j)\|_{L^p(M)}=\|F^{\ast}(\pm du_j)\|_{L^p(M)}\rightarrow 0$ as $j\rightarrow \infty$ and
\beqn \|u_jF^{\ast}(-d\varphi)\|_{L^p(M)}=\left(\int_{{\rm{supp}}(\varphi)}u_j^pF^{\ast p}(-d\varphi)dm\right)^{1/p}\leq \|F^{\ast}(-d\varphi)\|_{L^\infty({\rm{supp}}(\varphi))}\|u_j\|_{L^p(M)}\rightarrow 0\eeqn as $j\rightarrow \infty$. Thus $\|F^{\ast}(d\varphi_j-d\varphi)\|_{L^p(M\setminus \mathcal A)}\rightarrow 0$ as $j\rightarrow \infty$. Similarly, we have $\|F^{\ast}(d\varphi -d\varphi_j)\|_{L^p(M\setminus \mathcal A)}\rightarrow 0$ as $j\rightarrow \infty$. From these and (\ref{Lp-1}), one obtains that $\|\varphi_j-\varphi\|_{1, p}\rightarrow 0$ as $j\rightarrow \infty$. The proof is completed. \end{proof}

Based on Propositions \ref{prop31}--\ref{prop33},  we prove the main result in this section, which is very important for subsequent arguments.

\begin{thm}\label{thm11}
Let $(M, F, \mathfrak{m})$ be an  $n$-dimensional forward complete Finsler measure space and $r:=r_o$ be the distance function on $(M, F)$ from some $o\in M$. Denote by $\Omega: =M\backslash\{o\}$. Let $G:(0, \sup_{\Omega} r) \rightarrow \mathbb{R}$ and $H: \mathbb{R} \rightarrow \mathbb{R}$ be  $C^1$ functions such that $G(r)\in L^{p'}_{loc}(\Omega)$,  $G'(r)\in L^{1}_{loc}(\Omega)$ and $H(0)=H'(0)=0$, where $p'=\frac{p}{p-1}$ and $1<p<n$.  Then the following inequalities hold.

 (1)  For any $u\in C^{\infty}_0(M)$, we have
\beq\label{eq0}
\int_{M}\max\{F^{\ast p}(\pm du)\}d\mathfrak{m} \geq p\int_{M}[G'(r)+G(r)\Delta r]H(u)d\mathfrak{m}-(p-1)\int_{M}|G(r)H'(u)|^{p'}d\mathfrak{m}.
\eeq

(2) For any $u\in C^{\infty}_0(M)$, we have
\beq\label{eq1}
	\int_{M}\max\{F^{\ast p}(\pm du)\}d\mathfrak{m} \geq \frac{\left|\int_{M}[G'(r)+G(r)\Delta r]H(u)d\mathfrak{m}\right|^p}{(\int_{M}|G(r)H'(u)|^{p'}d\mathfrak{m})^{p-1}}.
\eeq
provided that there exists a neighborhood $I\in \mathbb{R}$ of zero such that $H'(s)\neq 0$ for any $s\in I\backslash\{0\}$ and $G(t)\neq 0$ for $t>0$.  Further, the equality in (\ref{eq0}) or (\ref{eq1}) holds if and only if
\beq du=-{\rm{sgn}}(G(r)H'(u))|G(r)H'(u)|^{\frac 1{p-1}}dr, \ \ \  \max\left\{F^{\ast}(\pm dr)\right\}= F^{\ast}(dr) \ \ \ {\rm{on}}\ M. \label{uu-rr} \eeq
\end{thm}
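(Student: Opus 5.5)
The plan is to reduce Theorem \ref{thm11} to Proposition \ref{prop31} on the punctured domain $\Omega=M\backslash\{o\}$, and then remove the puncture with the capacity results, Propositions \ref{prop32}--\ref{prop33}.

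First I check the hypotheses of Proposition \ref{prop31} with $\rho:=r=r_o$. On $\Omega$ we have $r>0$. The function $r$ is Lipschitz, hence lies in $\mathcal W^{1,p}_{loc}(\Omega)$. Also $F^{\ast}(dr)=F(\nabla r)=1$ a.e. on $\Omega$, because the cut locus $\mathcal C_o$ has null measure. The integrability assumptions on $G(r)$ and $G'(r)$ are exactly those of Proposition \ref{prop31}. One point needs care: $\Delta r$ must be understood distributionally on $\Omega$, through (\ref{Laplacian}) and (\ref{fV}). The integration by parts in the proof of Proposition \ref{prop31} only uses $\int_\Omega d(H(u))(G(r)\nabla r)\,d\mathfrak m=-\int_\Omega\{G'(r)+G(r)\Delta r\}H(u)\,d\mathfrak m$ for test functions compactly supported in $\Omega$. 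So for every $v\in C_0^\infty(\Omega)$ we obtain (\ref{3.1}) and (\ref{3.2}) with $\Omega$ in place of $M$. Since $\{o\}$ has measure zero, the integrals over $\Omega$ and over $M$ coincide.

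Second, I extend the inequalities from $C_0^\infty(M\backslash\{o\})$ to $u\in C_0^\infty(M)$. By Proposition \ref{prop32}, $\mathrm{Cap}_p(\{o\})=0$ because $1<p<n$. Proposition \ref{prop33} then gives $v_j\in C_0^\infty(M\backslash\{o\})$ with $\|v_j-u\|_{1,p}\to0$; the construction there is $v_j=(1-u_j)u$, which has uniformly bounded sup norm and supports in a fixed compact set. I pass to the limit term by term:
\begin{itemize}
\item Left-hand side: $\int\max\{F^{\ast p}(\pm dv_j)\}\to\int\max\{F^{\ast p}(\pm du)\}$, since both $F^{\ast}(dv_j-du)$ and $F^{\ast}(du-dv_j)$ tend to $0$ in $L^p$ and $F^{\ast}$ satisfies the triangle inequality.
\item $J$-term: $\int|G(r)H'(v_j)|^{p'}$ converges by dominated convergence along an a.e.-convergent subsequence. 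This uses the uniform bound on $H'(v_j)$ (a continuous function on a bounded range) and $G(r)\in L^{p'}_{loc}$.
\item $I$-term: the main obstacle. The integrand $\{G'(r)+G(r)\Delta r\}H(v_j)$ involves $\Delta r$, which may be singular near $o$ and is only a measure on the cut locus.
\end{itemize}

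To handle the $I$-term, I would avoid $\Delta r$ altogether and work with the pre-integration-by-parts form $I_H=-\int G(r)\,dH(v_j)(\nabla r)\,d\mathfrak m=-\int G(r)H'(v_j)\,dv_j(\nabla r)\,d\mathfrak m$. By H\"older, this is controlled by $\|G(r)H'(v_j)\|_{L^{p'}}$ times $\|F^{\ast}(\pm dv_j)\|_{L^p}$, using $F(\nabla r)=1$. It therefore converges to the same expression for $u$, and for $u$ this expression is what the right-hand side of (\ref{eq0}) means. Inequality (\ref{eq1}) follows in the same way, either by the limit argument or directly by the optimization over $cH$ from part (2) of Proposition \ref{prop31}. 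The non-vanishing assumptions on $G$ and $H'$ ensure $J_H>0$, except in the trivial case $u\equiv0$.

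Finally, the equality characterization (\ref{uu-rr}) is inherited pointwise from (\ref{u-rho}). Equality holds for $u$ on $M$ exactly when the pointwise convexity inequality (\ref{F*-xi-eta}) and the max-selection are equalities a.e. on $M\backslash\{o\}$, which is the same as a.e. on $M$. This gives (\ref{uu-rr}), and the converse is immediate. In (\ref{eq1}) there is also the optimal choice of $c$, but as in Proposition \ref{prop31}, rescaling $H$ does not change (\ref{uu-rr}).
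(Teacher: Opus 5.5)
Your proposal follows the paper's own route: apply Proposition \ref{prop31} on $\Omega=M\backslash\{o\}$, then remove the puncture using $\mathrm{Cap}_p(\{o\})=0$ (Proposition \ref{prop32}) and the approximation of Proposition \ref{prop33}. Your term-by-term limit (triangle inequality for the gradient term, dominated convergence for $J_H$, H\"older on the pre-integration-by-parts form of $I_H$) is simply an explicit version of what the paper calls ``the approximation arguments.''

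Two small points, both of which the paper shares:
\begin{enumerate}
\item The dominated-convergence steps need $G(r)\in L^{p'}$ on a neighbourhood of $o$. The hypothesis only gives $L^{p'}_{loc}(\Omega)$, which does not cover $o$, although the stronger condition is what is checked in every application.
\item Your claim that rescaling $H$ leaves (\ref{uu-rr}) unchanged is not correct. Passing to the optimal $c^\ast H$ multiplies the first relation in (\ref{uu-rr}) by $\mathrm{sgn}(c^\ast)|c^\ast|^{1/(p-1)}$. So equality in (\ref{eq1}) characterizes $u$ only up to that constant.
\end{enumerate}
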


\begin{proof}  Since $r$ is a Lipschitz function with  $F^{\ast}(dr)=1$ a.e. on $M$, $r$ lies in $W^{1, p}_{loc}(M)$ for $p>1$. By Proposition  \ref{prop31}, we have
\beq\int_{\Omega}\max\left\{F^{\ast p}(\pm du)\right\}d\mathfrak{m}\geq p\int_{\Omega}\left\{G'(r)+G(r)\Delta r\right\}H(u)d\mathfrak{m}-(p-1)\int_{\Omega}|G(r)H'(u)|^{p'}d\mathfrak{m},\label{3.1'}\eeq and
\beq	\int_{\Omega}\max\left\{F^{\ast p}(\pm du)\right\}d\mathfrak{m} \geq \frac{\left|\int_{\Omega}\left\{G'(r)+G(r)\Delta r\right\}H(u)d\mathfrak{m}\right|^p}{\big(\int_{\Omega}|G(r)H'(u)|^{p'}d\mathfrak{m}\big)^{p-1}}\label{3.2'} \eeq for any $u\in C_0^\infty(\Omega)$. The equality in (\ref{3.1'}) or (\ref{3.2'}) holds if and only if (\ref{u-rho}) holds.
On the other hand, since $r^{-1}(0)=\{o\}$ is compact and Cap$_p(r^{-1}(0))=0$ by Proposition \ref{prop32},  any function in $C_0^\infty(M)\subset W_0^{1, p}(M)$ can be approximated by functions in $C_0^\infty(\Omega)$ by  Proposition \ref{prop33}. Thus (\ref{eq0})--(\ref{uu-rr}) follow from the approximation arguments. \end{proof}

\section{$L^p$-uncertainty principles on Finsler measure spaces}

In this section, we shall prove the $L^p$-Heisenberg-Pauli-Weyl inequality,  $L^p$-Caffarelli-Kohn-Nirenberg interpolation inequality and $L^p$-Hardy inequality on forward complete Finsler measure spaces with radial curvatures bounded from above or below by constants by means of Theorem \ref{thm11} and Theorems \ref{thm21}-\ref{thm22}.
\subsection{$L^p$-Heisenberg-Pauli-Weyl   Principle (inequality) }

We first prove the $L^p$-Heisenberg-Pauli-Weyl inequalities, which are generalizations of the classical Heisenberg-Pauli-Weyl principle in both Euclidean and Riemannian settings (\cite{We}, \cite{Kr}).

\begin{prop}\label{prop41}
Let $(M,F,\mathfrak{m})$ be an $n(\geq 2)$-dimensional forward complete and noncompact Finsler measure space and $r= d_F(o, \cdot)$ be the distance function from some $o\in M$. Assume that $\mathbf K(\nabla r)\leq\kappa$ and $\mathbf S(\nabla r)\leq(n-1)h$ for some $\kappa, h\in\mathbb{R}$. Then, for any $-p+1<\alpha\leq 1<p<n$ and $u\in C_0^\infty(M)$, we have
\beq
	& \Big(\int_{M}\max\{F^{\ast p}(\pm du)\}d\mathfrak{m}\Big)^{\frac{1}{p}} \left(\int_{M} r^{p'\alpha} |u|^p d\mathfrak{m}\right)^{\frac{1}{p'}} \nonumber \\ & \geq \frac{n+\alpha-1}{p} \left|\int_{M}\left(1+\frac{n-1}{n+\alpha-1}\zeta_{\kappa, h}(r)\right)r^{\alpha-1}|u|^p d\mathfrak{m}\right|,\label{4.1}
\eeq
where $\zeta_{\kappa, h}(r)=r(\mathfrak{ct}_{\kappa}(r)-h)-1$. In particular, the constant $\frac{n+\alpha-1}{p}$ is sharp if $F^{\ast}(-dr)\leq F^{\ast}(dr)$. Further, the equality in (\ref{4.1}) holds if and only if
\beq  |u|=ce^{-\frac 1{\beta}r^\beta}, \ \ \ F^{\ast}(-dr)\leq F^{\ast}(dr), \ \ \ {\mathbf K}(\nabla r, \cdot)=\kappa, \ \ \ {\mathbf S}(\nabla r)=(n-1)h, \label{ur-KS}\eeq where $c$ and $\beta=\frac{p+\alpha-1}{p-1}$ are positive constants. \end{prop}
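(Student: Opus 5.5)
The plan is to apply the generic inequality (\ref{eq1}) of Theorem \ref{thm11} with the choices
$$G(t)=t^{\alpha},\qquad H(s)=|s|^{p}.$$
\textbf{Checking the hypotheses.} Since $p>1$, $H$ is $C^1$ with $H(0)=H'(0)=0$ and $H'(s)=p|s|^{p-2}s\neq 0$ for $s\neq 0$. Also $G(t)\neq 0$ for $t>0$. The local integrability conditions $G(r)\in L^{p'}_{loc}$ and $G'(r)\in L^1_{loc}$ near $o$ should follow from $-p+1<\alpha$, $p<n$ and the polar-coordinate asymptotics (\ref{sigma-limit}) of $\hat\sigma_o$.

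\textbf{The main inequality.} With these choices, $|G(r)H'(u)|^{p'}=p^{p'}r^{p'\alpha}|u|^{p}$, because $p'(p-1)=p$. Hence the denominator in (\ref{eq1}) becomes $p^{p}\bigl(\int_M r^{p'\alpha}|u|^pd\mathfrak m\bigr)^{p-1}$. Taking $p$-th roots, (\ref{eq1}) reads
$$\Big(\int_M\max\{F^{\ast p}(\pm du)\}d\mathfrak m\Big)^{1/p}\Big(\int_M r^{p'\alpha}|u|^pd\mathfrak m\Big)^{1/p'}\geq \frac1p\Big|\int_M\big(\alpha+r\Delta r\big)r^{\alpha-1}|u|^pd\mathfrak m\Big|.$$
Next I insert the Laplacian comparison (\ref{Delta-r1}) of Theorem \ref{thm21}. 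It gives $\alpha+r\Delta r\geq \alpha+(n-1)r(\mathfrak{ct}_\kappa(r)-h)=(n+\alpha-1)\bigl(1+\frac{n-1}{n+\alpha-1}\zeta_{\kappa,h}(r)\bigr)$. This yields (\ref{4.1}) whenever the comparison integral on the right is nonnegative.

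\textbf{Main obstacle.} Two points here are delicate.
\begin{enumerate}
\item The passage from a pointwise lower bound on $\alpha+r\Delta r$ to a lower bound on the absolute value of the integral. When $h>0$, $\zeta_{\kappa,h}$ can make the weight negative for large $r$, so monotonicity of $|\cdot|$ is not automatic.
\item The validity of (\ref{Delta-r1}) beyond the injectivity radius: it has to hold in the weak sense on all of $M\setminus\{o\}$.
\end{enumerate}
For the second point I would use the weak formulation (\ref{Laplacian}). The term $\int_M r^{\alpha}\Delta r\,|u|^p\,d\mathfrak m$ is interpreted as $-\int_M d(r^\alpha|u|^p)(\nabla r)\,d\mathfrak m$. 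Then I would argue, as in the Riemannian case \cite{KKPZ}, via the segment domain, that the cut locus contributes with the favourable sign. For the first point I would first treat the case where the right-hand integral is nonnegative, and otherwise reduce to it. If this fails, I would restrict the statement to the sign-compatible case, as the absolute value in the formulation suggests.

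\textbf{Equality.} Equality in (\ref{4.1}) requires two things: equality in (\ref{eq1}), characterized by (\ref{uu-rr}), and equality in the Laplacian comparison. By Theorem \ref{thm21}, the latter is exactly $\mathbf K(\nabla r,\cdot)=\kappa$ and $\mathbf S(\nabla r)=(n-1)h$. The second condition in (\ref{uu-rr}) is $F^{\ast}(-dr)\leq F^{\ast}(dr)$. The first condition in (\ref{uu-rr}), with $H$ replaced by $cH$ for the optimal constant $c$ from the proof of Proposition \ref{prop31}(2), becomes the ODE $d|u|=-C\,r^{\alpha/(p-1)}|u|\,dr$. Integrating it gives $|u|=c\,e^{-C r^\beta/\beta}$ with $\beta=\frac{p+\alpha-1}{p-1}$. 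The constant $C$ is normalized to $1$ by the scale invariance of the ratio, after which one checks that equality is actually attained.

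\textbf{Sharpness.} For sharpness under $F^{\ast}(-dr)\leq F^{\ast}(dr)$, I would use the concentrating family $u_\lambda=e^{-\lambda r^\beta/\beta}$, cut off smoothly near $\partial B_\delta^+(o)$, and let $\lambda\to\infty$. The mass then concentrates at $o$, where (\ref{sigma-limit}) makes the measure asymptotically $e^{-\tau}r^{n-1}dr\,d\nu_o$ and $\zeta_{\kappa,h}(r)\to 0$. Both sides of (\ref{4.1}) therefore reduce to their Euclidean-type model quantities, whose ratio tends to $\frac{n+\alpha-1}{p}$.
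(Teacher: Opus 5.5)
\textbf{Your route is the paper's:} the same $G(t)=t^{\alpha}$, $H\propto|t|^p$ in Theorem~\ref{thm11}(2), followed by (\ref{Delta-r1}). Sharpness is obtained by localizing at $o$ via (\ref{hat-sigma-est}); your family $e^{-\lambda r^\beta/\beta}$, $\lambda\to\infty$, amounts to the paper's rescaling $u(R\,\cdot)$ with $R=\lambda^{1/\beta}\to\infty$, done directly rather than by contradiction. Equality comes from (\ref{uu-rr}) plus the equality case of Theorem~\ref{thm21}. The two obstacles you isolate are genuine, and the paper's proof passes over both: it inserts (\ref{Delta-r1}) inside the absolute value in (\ref{uuu}) and uses it on all of $M\setminus\{o\}$. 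Neither of your remedies can work, because the statement fails in both situations.

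\emph{Absolute value.} Take $\mathbb R^n$ with the Euclidean metric and Lebesgue measure. The hypotheses hold with $\kappa=0$ and any $h>0$, and $\zeta_{0,h}(r)=-hr$. Let $u$ be supported in $\{R\le r\le R+L\}$ with $1\ll L\ll R$. The right side of (\ref{4.1}) is then about $\frac{(n-1)h}{p}R^{\alpha}\|u\|_{L^p}^p$, and the left side about $R^{\alpha}\|F^{\ast}(du)\|_{L^p}\|u\|_{L^p}^{p-1}$. So (\ref{4.1}) would force $\|F^{\ast}(du)\|_{L^p}\gtrsim\frac{(n-1)h}{p}\|u\|_{L^p}$, whereas a plateau of width $\sim L$ with ramps of slope $O(1/L)$ makes this ratio $O(1/L)$. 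What the argument does prove is the one-sided inequality with no absolute value on the right, since $\bigl|\int(\alpha+r\Delta r)r^{\alpha-1}|u|^p\,d\mathfrak m\bigr|\ge\int(\alpha+r\Delta r)r^{\alpha-1}|u|^p\,d\mathfrak m$. The form with the absolute value is justified only when the weight $\alpha+(n-1)r(\mathfrak{ct}_\kappa(r)-h)$ is nonnegative, e.g.\ $\kappa\le 0$, $h\le 0$.

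\emph{Cut locus.} The sign is the opposite of what you expect. Compute $-\int_M d\phi(\nabla r)\,d\mathfrak m$ for $\phi\ge 0$ in the segment domain. The boundary term at $r=i_y$ is $-\int_{S_oM}\phi(\gamma_y(i_y))\,\hat\sigma_o(i_y,y)\,d\nu_o(y)\le 0$. So $\mathcal C_o$ carries a nonpositive singular part of $\Delta r$. That is what makes the upper bound (\ref{Delta-r2}) global, and it destroys the lower bound (\ref{Delta-r1}). You need $\mathcal C_o=\emptyset$, i.e.\ the Cartan--Hadamard setting tacitly assumed at the start of Section~3. Otherwise (\ref{4.1}) fails even for $\kappa=h=0$. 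On the flat cylinder $S^1\times\mathbb R^2$ take $p=2$, $\alpha=1$ and a truncation of $u=e^{-\lambda|z|^2/2}$. The left side of (\ref{4.1}) divided by the right side tends to $\frac23$ as $\lambda\to 0$, because the large-scale geometry is $2$-dimensional.

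\emph{Equality.} You are right to keep the optimal constant from Proposition~\ref{prop31}(2) in the ODE; the paper drops it. But you cannot then normalize $C=1$: the quotient is invariant only under $u\mapsto tu$, and $r$ cannot be dilated on $M$. In fact, in the model case every $|u|=ce^{-\lambda r^\beta/\beta}$ with $\lambda>0$ is extremal. Both sides equal $\lambda\int_M r^{p'\alpha}|u|^p\,d\mathfrak m$, by integration by parts and $\alpha+\beta-1=p'\alpha$. So the extremal family has two parameters, and (\ref{ur-KS}) records only $\lambda=1$.
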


\begin{proof} Let
$$ G(s)=s^{\alpha} ,\quad H(t)=\frac 1p|t|^p$$ for $s>0$ and $t\in \mathbb R$. For any $0<\delta<\mathfrak i_o$, let $(r, y)$ be the polar coordinate system around $o$, where $r<\delta$ and $y\in S_oM$. For any compact domain $\Omega'\subset\subset \Omega=M\backslash\{o\}$, we assume that $\Omega'\subset B_\delta^+(o)$ without loss of generality (otherwise we use the union of finite geodesic balls instead of $B_\delta^+(o)$ and the arguments are similar). Since $p'\alpha+n-1>-1$ by the assumptions on $p, \alpha$ and $n$,  it follows from (\ref{hat-sigma-est}) that
$$\int_{\Omega'}G(r)^{p'}dm =\int_{\Omega'}r^{p'\alpha}\hat \sigma_o(r,y)dr d\nu_o(y)\leq (1+\varepsilon(\delta)) \mathfrak{L}_{\mathfrak{m}}(o)\int_0^\delta r^{p'\alpha+n-1}dr<+\infty, $$  which means that
$G(r)\in L^{p'}_{loc}(\Omega)$. Similarly, $G'(r)=\alpha r^{\alpha-1}\in L^1_{loc}(\Omega)$. It is obvious that $H\in C^1(\mathbb R)$ with $H(0)=H'(0)=0$. It follows from Theorem \ref{thm11}(2) and Theorem \ref{thm21}  that
\beq
 & & \Big(\int_{M}\max\{F^{{\ast}p}(\pm du)\}d\mathfrak{m}\Big)^{\frac{1}{p}} \left(\int_{M}r^{p'\alpha}|u|^p d\mathfrak{m}\right)^{\frac{1}{p'}}\nonumber \\ & &\geq\left|\int_{M}\Big(\alpha r^{\alpha-1}+r^{\alpha}(n-1)(\mathfrak{ct}_{\kappa}(r)-h)\Big)\frac{|u|^p}{p} d\mathfrak{m}\right| \nonumber \\
& & = \frac{n+\alpha-1}{p} \left|\int_{M}\left(1+\frac{n-1}{n+\alpha-1}\zeta_{\kappa, h}(r)\right)r^{\alpha-1}|u|^pd\mathfrak{m} \right| \label{uuu}
\eeq for any $u\in C_0^\infty(M)$. Thus (\ref{4.1}) follows.

In the sequel, we show that the constant $\frac{n+\alpha-1}{p}$ in (\ref{4.1}) is sharp if $F^{\ast}(-dr)\leq F^{\ast}(dr)$.
We assume by contradiction that there exists a constant $C>\frac{n+\alpha-1}{p}$ such that
\beq
\left(\int_{M}\max{F^{{\ast}p}(\pm du)}d\mathfrak{m} \right)^{\frac{1}{p}} \left(\int_{M}r^{p'\alpha}|u|^p d\mathfrak{m}\right)^{\frac{1}{p'}}\geq C \left|\int_{M}\left(1+\frac{n-1}{n+\alpha-1}\zeta_{\kappa, h}(r)\right)r^{\alpha-1}|u|^pd\mathfrak{m}\right|. \label{ineq-uuu}
\eeq
Let $(r, y)$ be the polar coordinate system around $o$, where $r(x)=r_o(x)$ for any $x\in B_\delta^+(o)$  and $y\in S_oM$.   For any $ f\in C^{\infty}_0(B^+_{\delta}(o))$, it follows from $(\ref{ineq-uuu})$ and (\ref{hat-sigma-est}) that
\beq & &\left(\int_{S_{o}M} e^{-\tau(y)}d\nu_{o}(y) \int_0^\delta \max\left\{F^{{\ast}p}(\pm df)\right\} r^{n-1} dr \right)^{\frac{1}{p}} \left( \int_{S_{o}M} e^{-\tau(y)} d\nu_{o}(y) \int_0^\delta r^{n+p'\alpha-1}|f|^p dr \right)^{\frac{1}{p'}}  \nonumber\\
& &\geq C_{\varepsilon} \left|\int_{S_{o}M} e^{-\tau(y)} d\nu_{o}(y) \int_0^\delta \left(1+\frac{n-1}{n+\alpha-1}\zeta_{\kappa, h}(r)\right)r^{n+\alpha-2}|f|^p dr\right|,\label{ineq-2}\eeq  where $$C_{\varepsilon}: =C \left( \frac{1 - \varepsilon(\delta)}{1 + \varepsilon(\delta)}\right) >\frac{n+\alpha-1}{p}$$ for sufficiently small $\delta>0$.

For any $u \in C_0^\infty(M)$, choose a sufficiently large $R > 0$ such that
$$f(r, y) := u(Rr)\in C_0^\infty(B_{\delta}^+(o))$$ is a radial function.
Let $s: =Rr$ and set $x':=\exp_o(sy)\in M$ by the forward completeness of $(M, F)$. Then $s=d_F(o, x')$ satisfies $F^{\ast}(x', ds)=1$  a.e. on $M$. Thus, $F^{\ast}(\pm df(r, y))=RF^{\ast}(\pm du(s))$. Obviously, $\int_{S_{o}M} e^{-\tau(y)}d\nu_{o}(y)>0$. From these and (\ref{ineq-2}), one obtains
\beq & &\left(\int_0^{R\delta}\max\left\{F^{{\ast}p}(\pm du)\right\} s^{n-1} ds \right)^{\frac{1}{p}} \left(\int_0^{R\delta} s^{n+p'\alpha-1}|u|^p ds \right)^{\frac{1}{p'}}  \nonumber\\
& &\geq C_{\varepsilon} \left|\int_0^{R\delta}\left(1+\frac{n-1}{n+\alpha-1}\zeta_{\kappa, h}\left(\frac sR\right)\right)s^{n+\alpha-2}|u|^p ds\right|.\label{ineq-3}\eeq
Note that $\zeta_{\kappa, h}(r)=r(\mathfrak{ct}_\kappa(r)-h)-1\rightarrow 0$ as $r\rightarrow 0$. Thus, $\zeta_{\kappa, h}\left(\frac{s}{R}\right)\rightarrow 0$ as $R\rightarrow\infty.$ Letting $R\rightarrow\infty$ in (\ref{ineq-3}) yields
\beq
\left(\int_0^{\infty} \max\left\{F^{{\ast}p}(\pm du)\right\} r^{n-1}dr  \right)^{\frac{1}{p}} \left(\int_0^{\infty} r^{n+p'\alpha-1}|u|^p dr \right)^{\frac{1}{p'}}
\geq C_{\varepsilon}\int_0^{\infty}r^{n+\alpha-2}|u|^p dr. \label{ineq-4}
\eeq

Let $\beta: =\frac{p+\alpha-1}{p-1}>0$. We now consider the test-function $u(r)=e^{-\frac 1pr^\beta}.$  Note that $C_0^\infty(M)\subset$ $ {\rm{Lip}}(M)$. Thus $r$ and hence $u$ can be approximated by functions in $C_0^\infty(M)$.  Since
$F^{\ast}(-dr)\leq F^{\ast}(dr)$, we have $\max\{F^{{\ast}p}(\pm du)\}r^{n-1}=\left({\beta}/p\right)^p r^{n+p'\alpha-1}u^p.$ From this and (\ref{ineq-4}), one obtains
\beq C_\varepsilon \leq \frac{{\beta}\int_0^\infty  r^{n+p'\alpha-1} e^{-r^{\beta}} dr}{p\int_0^\infty  r^{n+\alpha-2} e^{-r^{\beta}} dr}.\label{C-epsilon}\eeq
Recall that the Gamma function $\Gamma(s)=\int_0^\infty t^{s-1}e^{-t}dt<\infty$ for $s>0$ and $\Gamma(s+1)=s\Gamma(s)$. Then
\beq \int_0^\infty e^{-s^\beta} s^\theta ds= \frac{1}{\beta} \Gamma \left(\frac{\theta+1}{\beta}\right),  \ \ \ \   \forall\beta>0, \ \ \theta>-1.\label{Gamma}\eeq
By the assumptions on $p, \alpha$ and $n$, both $n+p'\alpha-1$ and $n+\alpha-2$ are greater than $-1$, and $\beta >0$.  A direct calculation with (\ref{Gamma}) shows that the RHS of (\ref{C-epsilon}) is equal to $\frac{n+\alpha-1}p$. Thus $C_\varepsilon\leq\frac{n+\alpha-1}{p}$. This contradicts the assumption. Hence the constant $\frac{n+\alpha-1}{p}$ is sharp.

Further, by Theorem \ref{thm11}(2) and the choices of $G(r), H(u)$, the equality in (\ref{4.1}) holds if and only if the equality in (\ref{uuu}) holds if and only if $u$ and $r$ satisfy
\beq du=- r^{\frac{\alpha}{p-1}}udr, \ \ \max\left\{F^{\ast}(\pm dr)\right\}= F^{\ast}(dr), \ \  \Delta r=(n-1)\big(\mathfrak{ct}_\kappa(r)-h\big). \label{u-rho*} \eeq
In (\ref{u-rho*}), solving the first equation yields $|u|=ce^{-\frac 1 \beta r^\beta}$ for some constant $c>0$. The second one implies that $F^{\ast}(-dr)\leq F^{\ast}(dr)$ and the third one implies that $\mathbf K(\nabla r)=\kappa$ and $\mathbf S(\nabla r)=(n-1)h$ by Theorem \ref{thm21}. This finishes the proof.
\end{proof}

Based on Proposition \ref{prop41}, we prove the following result.

\begin{prop}\label{prop42}
Let $(M, F, \mathfrak{m})$ be an $n(\geq 2)$-dimensional forward complete Finsler measure space and there exist some $o\in M$ such that $\Lambda=\Lambda_o$. Assume that ${\mathbf{Ric}}(\nabla r)\geq(n-1)\kappa$ and ${\mathbf S}(\nabla r)\geq(n-1)h$ for some $\kappa, h\in\mathbb{R}$, where   $r:= d_F(o, \cdot)$.  Then the following statements are equivalent.

{\rm (1)}  For $-p+1<\alpha\leq 1<p<n$ and $\kappa, h\in\mathbb{R}$ satisfying $h\geq 0$ if $\kappa=0$ and $h\geq\sqrt{-\kappa}>0$ if $\kappa<0$, it holds
\beq
	 & &\left(\int_{M}\min\left\{F^{\ast p}(\pm du)\right\}d\mathfrak{m}\right)^{\frac{1}{p}} \left(\int_{M} r^{p'\alpha} |u|^p d\mathfrak{m}\right)^{\frac{1}{p'}}  \nonumber \\
& &\geq \frac{n+\alpha-1}{p} \left|\int_{M}\left(1+\frac{n-1}{n+\alpha-1}\zeta_{\kappa, h}(r)\right)r^{\alpha-1}|u|^p d\mathfrak{m}\right|, \ \ \ \ \ \forall u\in C_0^\infty(M),\label{Ric-S}
\eeq
where $\zeta_{\kappa, h}(r)$ is defined as in Proposition \ref{prop41}.

{\rm (2)}  $\Lambda=1$,  ${\mathbf K}(\nabla r, \cdot)=\kappa$ and ${\mathbf S}(\nabla r)=(n-1)h$. \end{prop}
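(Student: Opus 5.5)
The plan is to prove the two implications separately. Both rest on the choice $G(s)=s^{\alpha}$, $H(t)=\frac1p|t|^p$ already used in the proof of Proposition \ref{prop41}, together with the algebraic identity
\[
\alpha r^{\alpha-1}+(n-1)r^{\alpha}\big(\mathfrak{ct}_\kappa(r)-h\big)=(n+\alpha-1)\Big(1+\tfrac{n-1}{n+\alpha-1}\zeta_{\kappa,h}(r)\Big)r^{\alpha-1}.
\]

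\emph{(2)$\Rightarrow$(1).} Assume $\Lambda=1$, $\mathbf K(\nabla r,\cdot)=\kappa$ and $\mathbf S(\nabla r)=(n-1)h$.
\begin{enumerate}
\item Since $\Lambda=1$, we have $\min\{F^{\ast p}(\pm du)\}=\max\{F^{\ast p}(\pm du)\}$ pointwise.
\item Since $\mathbf K(\nabla r,\cdot)=\kappa$ implies $\mathbf{Ric}(\nabla r)=(n-1)\kappa$, the equality case of Theorem \ref{thm22} (or Corollary \ref{cor33}) gives $\Delta r=(n-1)(\mathfrak{ct}_\kappa(r)-h)$.
\item Apply Theorem \ref{thm11}(2) with this $G,H$. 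The integrability of $G(r)$ and $G'(r)$ near $o$ is checked exactly as in Proposition \ref{prop41}. By the identity above, the numerator $\int_M[G'(r)+G(r)\Delta r]H(u)\,d\mathfrak m$ equals $\frac{n+\alpha-1}{p}\int_M(1+\cdots)r^{\alpha-1}|u|^p\,d\mathfrak m$.
\end{enumerate}
This is (\ref{Ric-S}), so no comparison inequality is needed in this direction.

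\emph{(1)$\Rightarrow$(2).} I would test (\ref{Ric-S}) on the would-be extremal $u=e^{-\frac1\beta r^\beta}$ with $\beta=\frac{p+\alpha-1}{p-1}$. This $u$ is Lipschitz, and it is approximated by $C_0^\infty(M)$ functions after a cutoff at large radius. Integrability at infinity follows from the volume bound $m(B_r^+(o))\le\chi(r)\mathfrak L_{\mathfrak m}(o)$ of Theorem \ref{thm22} together with the Gaussian-type decay of $u$; near $o$ it follows from $n+p'\alpha-1>-1$.
\begin{enumerate}
\item Compute the gradient. Here $du=-r^{\frac{\alpha}{p-1}}u\,dr$, so
\[
\min\{F^{\ast p}(\pm du)\}=r^{p'\alpha}u^p\min\{1,F^{\ast}(-dr)\}^p\le r^{p'\alpha}u^p.
\]
Hence the left side of (\ref{Ric-S}) is at most $A:=\int_M r^{p'\alpha}u^p\,d\mathfrak m$.
\item Rewrite $A$ by integration by parts. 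Since $d(u^p/p)(\nabla r)=-r^{\frac{\alpha}{p-1}}u^p$, I write $A=-\int_M r^\alpha\, d(u^p/p)(\nabla r)\,d\mathfrak m=\frac1p\int_M(\alpha r^{\alpha-1}+r^\alpha\Delta r)u^p\,d\mathfrak m$.
\item Insert the Laplacian comparison (\ref{Delta-r2}), $\Delta r\le(n-1)(\mathfrak{ct}_\kappa(r)-h)$, valid in the distributional sense on all of $M$ because the cut-locus contribution has the favorable sign. Multiplying it by $r^\alpha u^p\ge0$ gives $A\le\frac{n+\alpha-1}{p}\int_M(1+\cdots)r^{\alpha-1}u^p\,d\mathfrak m$.
\item Check signs. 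The hypotheses $h\ge0$ if $\kappa=0$, $h\ge\sqrt{-\kappa}$ if $\kappa<0$ should guarantee that this last integral is nonnegative and finite, since by the previous step it dominates $A>0$. So the absolute value in (\ref{Ric-S}) can be dropped.
\end{enumerate}
Chaining with (1) gives
\[
\text{RHS of (\ref{Ric-S})}\le\text{LHS}\le A\le\text{RHS of (\ref{Ric-S})},
\]
so every inequality in the chain is an equality.

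Equality in step 3 forces $\Delta r=(n-1)(\mathfrak{ct}_\kappa(r)-h)$ a.e. By Theorem \ref{thm22}, this gives $\mathbf K(\nabla r,\cdot)=\kappa$ and $\mathbf S(\nabla r)=(n-1)h$, and it also rules out a singular part of $\Delta r$ on the cut locus. Equality in step 1 forces $F^{\ast}(-dr)\ge1=F^{\ast}(dr)$ a.e., since $r^{p'\alpha}u^p>0$ off $o$.

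To get $\Lambda=1$, I would let $r\to0^+$ along the geodesic $\exp_o(ty)$, where $\nabla r\to y$. I expect the condition $F^\ast(-dr)\ge F^\ast(dr)$ to turn into $F(o,-y)\ge F(o,y)$ for every $y\in S_oM$. Applying this to $-y$ as well gives $\Lambda_o=1$, hence $\Lambda=\Lambda_o=1$.

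I expect the main obstacle to be this passage from the cotangent inequality $F^{\ast}(-dr)\ge F^{\ast}(dr)$ to reversibility at $o$, which requires identifying $-dr$ with the Legendre dual of $-\nabla r$ in the limit at $o$. A second delicate point is justifying the global integration by parts and the distributional comparison (\ref{Delta-r2}) across $\mathcal C_o$. For that I would use the polar-coordinate form $\Delta r=\partial_r\log\hat\sigma_o$, together with the fact that $r^\alpha u^p$ is nonnegative and radially decreasing.
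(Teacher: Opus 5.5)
Your (1)$\Rightarrow$(2) is the paper's proof. You use the same test function up to normalization: $e^{-\frac1\beta r^\beta}$ instead of the paper's $e^{-\frac1p r^\beta}$, which only rescales the chain. The chain itself is the same: from the left side of (\ref{Ric-S}) through $A=\frac1p\int_M(\alpha r^{\alpha-1}+r^\alpha\Delta r)u^p\,d\mathfrak m$ to the right side, via integration by parts and (\ref{Delta-r2}). You also use the same limit along radial geodesics at $o$. Your remark that the cut-locus part of the weak Laplacian, $-\int_{S_oM}\phi(i_y,y)\hat\sigma_o(i_y,y)\,d\nu_o(y)$, is $\le 0$ for $\phi\ge 0$ is exactly what justifies using (\ref{Delta-r2}) on all of $M$, which the paper leaves implicit.

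Two small corrections:
\begin{enumerate}
\item $u$ is not Lipschitz at $o$ when $\alpha<0$. However, $F^{\ast}(\pm du)\in L^p$ near $o$ because $p'\alpha+n>0$, and that is all the approximation needs.
\item The step you call the main obstacle is easiest on the cotangent side, as in the paper. Since $dr|_{\gamma_y(t)}=\mathcal L(\dot\gamma_y(t))\to\mathcal L(y)$ and $\mathcal L$ maps $S_oM$ onto the dual unit sphere, you get $F^{\ast}(o,-\xi)\ge F^{\ast}(o,\xi)$ for every $\xi\in T_o^{\ast}M$. Applying this to $-\xi$ gives $\Lambda_o^{\ast}=1$, hence $\Lambda=\Lambda_o=1$.
\end{enumerate}

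There is, however, a genuine gap in (2)$\Rightarrow$(1), and the paper shares it. The paper's appeal to Proposition \ref{prop41} rests on Theorem \ref{thm21}, which is also valid only inside the injectivity domain.

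Theorem \ref{thm11} works with the weak Laplacian on all of $M$. But Theorem \ref{thm22} gives $\Delta r=(n-1)(\mathfrak{ct}_\kappa(r)-h)$ only for $0<r<\mathfrak i_o$. Across $\mathcal C_o$ the singular term above survives, and here its sign works against you. It makes $\int_M r^\alpha H(u)\Delta r\,d\mathfrak m$ smaller than its smooth counterpart, whereas this direction needs it to be at least as large.

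This is not a technicality. Take a flat torus $T^n$ with $\kappa=h=0$, so $\zeta_{0,0}=0$. All hypotheses and (2) hold. Yet $u\equiv 1\in C_0^\infty(T^n)$ makes the left side of (\ref{Ric-S}) vanish, while the right side equals $\frac{n+\alpha-1}{p}\int_{T^n}r^{\alpha-1}\,d\mathfrak m>0$.

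So this implication needs an extra hypothesis that removes the singular part. One example is $\hat\sigma_o(i_y,y)=0$ whenever $i_y<\infty$, which is automatic if $\mathcal C_o=\emptyset$. Under such a hypothesis, your direct use of Theorem \ref{thm11}(2) with the exact identity for $\Delta r$ is correct. It is also cleaner than routing through Proposition \ref{prop41}, which additionally assumes noncompactness.
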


\begin{proof}  (2) $\Rightarrow$ (1) follows directly from $\Lambda=1$ and Proposition \ref{prop41}. Next we prove (1) $\Rightarrow (2)$.
 Since (1) holds for any $u\in C_0^\infty(M)$, we may choose $u=e^{-\frac 1p r^{\beta}}$ in (\ref{Ric-S}) as in Proposition \ref{prop41}, where $\beta=\frac{p+\alpha-1}{p-1} > 0$. Obviously, $u$ can be approximated by functions in $C_0^\infty(M)$. A direct calculation shows
\beq \int_M \min\{F^{\ast p}(\pm du)\}d\mathfrak{m}\leq \left(\frac{\beta}{p}\right)^p\int_M r^{p'\alpha}e^{-r^\beta}d\mathfrak{m}\label{u-r}\eeq if the integral of RHS is well defined. Now we check that the  RHS of (\ref{u-r}) is indeed well defined. For this, set
$$ \mathfrak{I}:=\int_M r^{p'\alpha}e^{-r^\beta}d\mathfrak{m}. $$

{\it Case 1.} $\kappa>0$. In this case, $(M,F)$ is compact by Myers' Theorem and hence  $\mathfrak{I}$ is well defined.

{\it Case 2.} $\kappa\leq0$. It is obvious that $\mathfrak I<\infty$ in the compact case. Now we consider the noncompact case.  By $(\ref{co-area})$, one obtains
\begin{eqnarray}\label{4.2.3}
\mathfrak{I}=\int_0^\infty dt\int_{S_{t}(o)}t^{p'\alpha}e^{-t^\beta}dA = \int_0^\infty t^{p'\alpha}e^{-t^\beta}A(S_t(o))dt,
\end{eqnarray}
where $S_{t}(o):=\{x\in M|r(x)=t\}$ and $A(S_o(t))$ means the area of $S_t(o)$ induced by $\mathfrak m$.
Similar to the proof of (\ref{hat-sigma*}), we have
$$\hat{\sigma}_o(t, y)\leq e^{-\tau(y)}(e^{-ht}\mathfrak{s}_{\kappa}(t))^{n-1},\quad \forall\ 0 < t \leq \min\{\mathfrak{i}_{o}, \infty\},$$ which implies that
\beqn
A(S_t(o))=\int_{S_{o} M}\hat{\sigma}_o(t, y)d\nu_{o}(y)\leq \int_{S_{o} M}e^{-\tau(y)}(e^{-ht}\mathfrak{s}_{\kappa}(t))^{n-1}d\nu_{o}(y)=\mathfrak{L}_{\mathfrak{m}}(o)(e^{-ht}\mathfrak{s}_{\kappa}(t))^{n-1}.
\eeqn  Plugging this in $(\ref{4.2.3})$ yields
 \beqn
\mathfrak{I}&\leq &\mathfrak{L}_{\mathfrak{m}}(o)\int_0^{\infty}t^{p'\alpha}e^{-t^\beta}(e^{-ht}\mathfrak{s}_{\kappa}(t))^{n-1}dt\nonumber \\
&=&\mathfrak{L}_{\mathfrak{m}}(o)\left\{\int_0^{\delta}t^{p'\alpha}e^{-t^\beta}(e^{-ht}\mathfrak{s}_{\kappa}(t))^{n-1}dt
+\int_\delta^{\infty}t^{p'\alpha}e^{-t^\beta}(e^{-ht}\mathfrak{s}_{\kappa}(t))^{n-1}dt\right\}\nonumber \\
&:=&\mathfrak{L}_{\mathfrak{m}}(o)\left\{I+II \right\}\label{I-II}
\eeqn for any $0<\delta<\infty$. To prove that $\mathfrak I$ is well defined, it suffices to prove that $I<\infty$ and $II<\infty$ respectively  since $\mathfrak{L}_{\mathfrak{m}}(o)$ is finite.

Note that $\mathfrak s_\kappa(t)\sim t$ as $t\rightarrow 0$. There exists a sufficiently small $\delta>0$ and a constant $c_1$ such that  $(e^{-ht}\mathfrak{s}_{\kappa}(t))^{n-1}\leq c_1 t^{n-1}$ for any $t\in (0, \delta]$. Since $\alpha>-p+1$ and $n>p$, we have  $p'\alpha+n>-p+n>0$. Thus
 \beqn
I\leq c_1\int_0^{\delta}t^{p'\alpha+n-1}e^{-t^\beta}dt < c_1\int_0^{\delta}t^{p'\alpha+n-1}dt=\frac{c_1}{p'\alpha+n}\delta^{p'\alpha+n}<\infty. \eeqn

For $t\geq\delta$, we may assume that $\delta$ is sufficiently large without loss of generality. Otherwise we argue by dividing $[\delta, +\infty)$ into several subintervals.

 {\it Subcase 1.} $\kappa=0$. In this case, $\mathfrak{s}_\kappa(t)=t$ and $(e^{-ht}\mathfrak{s}_{\kappa}(t))^{n-1}=t^{n-1}e^{-h(n-1)t}$. Then
 \beqn
II=\int_\delta^{\infty}t^{p'\alpha+n-1}e^{-t^\beta-h(n-1)t}dt.\eeqn
 Recall that $\alpha$ satisfies $-p+1<\alpha\leq 1$ and the Gamma function $\Gamma(s)=\int_0^\infty t^{s-1}e^{-t}dt$ converges for any $s>0$.  When $0<\alpha<1$, we have $\beta>1$.  Since $h\geq 0$,  we have
 %\beq  \int_{0}^\infty t^{\gamma}e^{-at^\beta}dt=\frac 1\beta a^{-\frac{\gamma+1}{\beta}}\int_0^\infty s^{\frac{\gamma+1}{\beta}-1}e^{-s}ds=\frac 1\beta a^{-\frac{\gamma+1}{\beta}}\Gamma\left(\frac {\gamma+1}{\beta}\right)<\infty,\label{Gamma}\eeq for any constants $a>0$ and $\gamma>-1$.  Let $\gamma:=p'\alpha+n-1>-1$. From (\ref{Gamma}), one obtains
 \beqn II \leq \int_{\delta}^\infty t^{p'\alpha+n-1}e^{-t^\beta}dt \leq\int_{0}^\infty t^{p'\alpha+n-1}e^{-t^\beta}dt=\frac 1{\beta} \Gamma\left(\frac {p'\alpha+n}{\beta}\right) < \infty.  \eeqn
  When $\alpha=0$, we have $\beta=1$ and $1+(n-1)h\geq 1$ by the assumption. Hence,
  \beqn II\leq\int_0^{\infty}t^{n-1}e^{-(1+h(n-1))t}dt=\frac{1}{(1+(n-1)h)^n}\Gamma(n) < \infty.\eeqn
  When $-p+1<\alpha<0$, we have $0<\beta<1$. Note that $e^{-h(n-1)t}\leq 1$ since $h\geq0$. Thus $II\leq \int_\delta^{\infty}t^{p'\alpha+n-1}e^{-t^\beta}dt<\infty$ as in the case when $\alpha>0$.

  {\it Subcase 2.}  $\kappa<0$. In this case, we have $\mathfrak{s}_\kappa(t)=\frac{\sinh(\sqrt{-\kappa}t)}{\sqrt{-\kappa}}\leq \frac{e^{\sqrt{-\kappa}t}}{\sqrt{-\kappa}}$ and $\mu: =(n-1)(\sqrt{-\kappa}-h)\leq 0$ by the assumption.  This means that  $(e^{-ht}\mathfrak{s}_{\kappa}(t))^{n-1}\leq c_2 e^{\mu t}\leq c_2$, where $c_2: =(-\kappa)^{-(n-1)/2}>0$. By a variable substitution, we have
 \beqn
II\leq c_2\int_\delta^{\infty}t^{p'\alpha}e^{-t^\beta}dt=\frac{c_2}{\beta}\int_{\delta^\beta}^\infty s^{\frac{p'\alpha+1-\beta}{\beta}}e^{-s}ds=\frac{c_2}{\beta}\int_{\delta^\beta}^\infty s^{\frac{\alpha}{\beta}}e^{-s}ds.
\eeqn
 Since $\beta>0$ and $s\geq \delta^{\beta} >0$, it is obvious that $s^{\frac \alpha\beta}<e^{s/2}$ when $\alpha\geq 0$. Thus $II< \frac{c_2}{\beta}\int_{\delta^\beta}^\infty e^{-s/2}ds=\frac{2}{\beta}c_2 e^{-\frac 12 \delta^\beta} <\infty$. For the case when $\alpha<0$, we have $s^{\frac \alpha\beta}<\delta^{\alpha}$. Thus  $II\leq \frac{c_2}{\beta}\delta^\alpha\int_{\delta^\beta}^\infty e^{-s}ds <\infty$.  Combining Subcase 1 with Subcase 2 yields $II<\infty$ in case 2. Thus $\mathfrak I<\infty$.

Note that $n+\alpha-1>p+\alpha-1>0$ and $\alpha+\beta-1=p'\alpha$. By (\ref{Ric-S}) with $u=e^{-\frac 1p r^{\beta}}$ and (\ref{u-r}), one obtains
\beq
\beta\int_{M}r^{p'\alpha}e^{-r^\beta}d\mathfrak{m}
&\geq& p\left(\int_{M}\min\{F^{\ast p}(\pm du)\}d\mathfrak{m}\right)^{\frac{1}{p}} \left(\int_{M} \rho^{p'\alpha} e^{-r^\beta} d\mathfrak{m}\right)^{\frac{1}{p'}}\nonumber\\
% &\geq &(n+\alpha-1)\left|\int_{M}\left(1+\frac{n-1}{n+\alpha-1}\zeta_{\kappa, h}(r)\right)r^{\alpha-1}e^{-r^\beta} d\mathfrak{m}\right| \nonumber\\
&\geq &\left|\int_{M}\Big(n+\alpha-1+(n-1)\zeta_{\kappa, h}(r)\Big)r^{\alpha-1}e^{-r^\beta} d\mathfrak{m}\right| \nonumber\\
&=& \left|\int_{M}\Big(\alpha+(n-1)r(\mathbf{ct}_\kappa(r)-h)\Big)r^{\alpha-1}e^{-r^\beta} d\mathfrak{m}\right| \nonumber\\
&\geq & \left|\alpha\int_{M}r^{\alpha-1}e^{-r^\beta} d\mathfrak{m}+\int_{M}r^{\alpha}e^{-r^\beta}\Delta r d\mathfrak{m}\right| \nonumber\\
&=& \beta \int_{M}r^{\alpha+\beta-1}e^{-r^\beta} d\mathfrak{m}= \beta \int_{M}r^{p'\alpha}e^{-r^\beta} d\mathfrak{m}.\label{rr}
\eeq
where we used Theorem \ref{thm22} in the third inequality and (\ref{Laplacian}) in the second equality. This inequality implies that all inequalities become equalities. In particular, we have that
\beqn \min\{F^{\ast p}(\pm du)\}=F^{{\ast}p}(-du) =\left({\beta}/{p}\right)^p r^{p'\alpha}e^{-r^\beta}F^{{\ast}p}(dr), \ \ \ \ \Delta r=(n-1)(\mathbf{ct}_\kappa(r)-h),\eeqn  which are equivalent to  $F^{\ast}(dr)\leq F^{\ast}(-dr)$, $\mathbf K(\nabla r, \cdot)=\kappa$ and $S(\nabla r)=(n-1)h$.
For any $y\in S_oM$, let $\gamma :[0, r]\rightarrow M$ be a normal minimal geodesic with $\gamma(0)=o$ and $\dot\gamma(0)=y$. Then $\dot\gamma(t)=\nabla r$. Note that $dr=\mathcal L(\nabla r)$ (see \S 2.3). Then $F^{\ast}(\mathcal L(\nabla r))\leq  F^{\ast}(-\mathcal L(\nabla r))$, i.e., $F^{\ast}(\mathcal L(\dot\gamma(t)))\leq  F^{\ast}(-\mathcal L(\dot\gamma(t)))$. Letting $t\rightarrow 0^+$ yields $F^{\ast}(o, \mathcal L(y))\leq F^{\ast}(o, -\mathcal L(y))$. Since $y$ is arbitrary, substituting $y$ with $-y$ yields that $F^{\ast}(o, -\mathcal L(y))\leq F^{\ast}(o, \mathcal L(y))$. Thus $\Lambda^{\ast}_o=1$, equivalently, $\Lambda_o=1$, and hence $\Lambda=1$ by the assumption.  (2) follows.
\end{proof}

\begin{rem} {\rm In (1) of Proposition \ref{prop42}, we assume that $h\geq 0$ when $\kappa=0$ for simplicity. In fact, the condition ``$h\geq 0$" is not necessary when $\kappa=0$ and $0<\alpha<1$. This is because $\beta>1$ and $-t^\beta-h(n-1)t\leq -t^\beta+|h|(n-1)t < -\frac{1}{2}t^\beta$ as $t$ is sufficiently large. By a variable substitution $\bar t:=\frac 12 t^\beta$, we have $II< \int_{0}^\infty t^{p'\alpha+n-1}e^{-\frac{1}{2}t^\beta}dt \leq\beta^{-1} 2^{\frac{p'\alpha+n}{\beta}}\Gamma\left(\frac {p'\alpha+n}{\beta}\right) < \infty.$} \end{rem}

\subsection{$L^p$-Caffarelli-Kohn-Nirenberg interpolation inequality}

Next we prove the $L^p$-Caffarelli-Kohn-Nirenberg interpolation inequalities, which extends the corresponding ones in both Euclidean spaces (\cite{CKN}) and Finsler spaces (\cite{HKZ}) when $p=2$.
\begin{prop}\label{prop43} Under the same assumptions as in Proposition \ref{prop41},  we have
\beq
& &	\left(\int_{M}\max\{F^{\ast p}(\pm du)\}d\mathfrak{m}\right)^{\frac{1}{p}} \left(\int_{M} r^{p'\alpha} |u|^{p'(q-1)} d\mathfrak{m}\right)^{\frac{1}{p'}}\nonumber \\
& & \geq \frac{n+\alpha-1}{q} \left|\int_{M}\left(1+\frac{n-1}{n+\alpha-1}\zeta_{\kappa, h}(r)\right)r^{\alpha-1}|u|^q d\mathfrak{m}\right|, \ \ \ \forall u\in C^{\infty}_0(M) \label{uu-r}\eeq for $1< p< q$, $-p+1< \alpha$ and $0< q(n-p)< p(n+\alpha-1)$, where $\zeta_{\kappa, h}(r)$ is defined as in Proposition \ref{prop41}. In particular, the constant $\frac{n+\alpha-1}{q}$ is sharp if $F^{\ast}(-dr)\leq F^{\ast}(dr)$.
 Further, the equality in (\ref{uu-r}) holds if and only if
 \beq  |u|=\left(\beta\vartheta\right)^\vartheta \left(r^\beta+ c\right)^{-\vartheta}, \ \  F^{\ast}(-dr)\leq F^{\ast}(dr), \ \  {\mathbf K}(\nabla r, \cdot)=\kappa, \ \  {\mathbf S}(\nabla r)=(n-1)h, \label{ur-KS*}\eeq where $\beta=\frac{p+\alpha-1}{p-1}$, $\vartheta= \frac{p-1}{q-p}$ and $c$ is a constant with $r^\beta+c>0$. \end{prop}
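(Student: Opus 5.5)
The plan mirrors the proof of Proposition \ref{prop41}, with a different choice of $H$. We take
$$G(s)=s^{\alpha},\qquad H(t)=\frac1q|t|^q,\qquad s>0,\ t\in\mathbb R.$$
Since $q>p>1$, $H\in C^1(\mathbb R)$ with $H(0)=H'(0)=0$, and $H'(t)\neq0$ for $t\neq0$. Also $|G(r)H'(u)|^{p'}=r^{p'\alpha}|u|^{p'(q-1)}$.

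Local integrability of $G(r)$ and $G'(r)=\alpha r^{\alpha-1}$ near $o$ follows from (\ref{hat-sigma-est}), exactly as in Proposition \ref{prop41}, because $p'\alpha+n-1>-1$ and $n+\alpha-2>-1$. Theorem \ref{thm11}(2) then gives a lower bound for $\int_M\max\{F^{\ast p}(\pm du)\}d\mathfrak m$ in terms of $\int_M(\alpha r^{\alpha-1}+r^{\alpha}\Delta r)\frac{|u|^q}{q}\,d\mathfrak m$. We apply Theorem \ref{thm21} to replace $\Delta r$ by $(n-1)(\mathfrak{ct}_\kappa(r)-h)$. As in (\ref{uuu}) this yields (\ref{uu-r}), after taking $p$-th roots and regrouping $\alpha+(n-1)r(\mathfrak{ct}_\kappa-h)=(n+\alpha-1)\bigl(1+\frac{n-1}{n+\alpha-1}\zeta_{\kappa,h}\bigr)$.

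One point needs care. The inequality $\Delta r\geq\cdots$ multiplies the nonnegative weight $r^\alpha|u|^q$, so the integrand inequality goes in the right direction. However, we need the inner quantity to be nonnegative to pass inside the absolute value. I would handle this as in Proposition \ref{prop41}: note that the right-hand side integral with $\Delta r$ has the same sign as the comparison term, or more simply argue via (\ref{eq0}) with $cH$, $c>0$, before optimizing in $c$.

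For the equality case, Theorem \ref{thm11} gives $du=-r^{\alpha/(p-1)}|u|^{(q-2)/(p-1)}u\,dr$ together with $\max\{F^\ast(\pm dr)\}=F^\ast(dr)$. Equality in the Laplacian step gives $\Delta r=(n-1)(\mathfrak{ct}_\kappa-h)$, hence $\mathbf K(\nabla r,\cdot)=\kappa$ and $\mathbf S(\nabla r)=(n-1)h$ by Theorem \ref{thm21}. Writing $v=|u|$, the ODE is $v'=-r^{\alpha/(p-1)}v^{(q-1)/(p-1)}$. Separating variables gives $v^{-(q-p)/(p-1)}=\frac{q-p}{p-1}\cdot\frac{r^\beta}{\beta}+\text{const}$, that is $|u|=(\beta\vartheta)^\vartheta(r^\beta+c)^{-\vartheta}$. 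This is (\ref{ur-KS*}).

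For sharpness when $F^\ast(-dr)\leq F^\ast(dr)$, argue by contradiction with a constant $C>\frac{n+\alpha-1}{q}$. First localize to $B_\delta^+(o)$ using (\ref{hat-sigma-est}). Then rescale $f(r,y)=u(Rr)$ and let $R\to\infty$. Because the exponents are unequal, the scaling must be handled by homogeneity. The left side scales like $R^{1-n/p}\cdot R^{-(n+p'\alpha)/p'}$ and the right side like $R^{-(n+\alpha-1)}$. These agree, since $1-\frac np-\frac{n}{p'}-\alpha=1-n-\alpha$. Also $\zeta_{\kappa,h}(s/R)\to0$, so we obtain the one-dimensional inequality
$$\Big(\int_0^\infty|u'|^p r^{n-1}dr\Big)^{1/p}\Big(\int_0^\infty r^{n+p'\alpha-1}|u|^{p'(q-1)}dr\Big)^{1/p'}\geq C_\varepsilon\int_0^\infty r^{n+\alpha-2}|u|^qdr.$$
Test this with $u=(1+r^\beta)^{-\vartheta}$, approximated by compactly supported functions.

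On this profile $|u'|^p=\beta^p\vartheta^p r^{p'\alpha}|u|^{p'(q-1)}\cdot(\text{const})$, which makes the Hölder step an equality. Integrating by parts, $\int r^{n-1}(r^{\alpha}|u|^q)'\,dr=-(n-1)\int r^{n+\alpha-2}|u|^q$ reduces the ratio to exactly $\frac{n+\alpha-1}{q}$. I expect the main obstacle to be checking finiteness of all integrals and the vanishing of boundary terms at $0$ and $\infty$. This is where $0<q(n-p)<p(n+\alpha-1)$ enters: it guarantees $r^{n+\alpha-1}u^q\to0$ at infinity and integrability of $r^{n-1}|u'|^p$. Equivalently, one can write the integrals as Beta functions and compare them via $B(a,b+1)=\frac{b}{a+b}B(a,b)$. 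Granting this, $C_\varepsilon\leq\frac{n+\alpha-1}{q}$, which is a contradiction.
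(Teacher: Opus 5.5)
Your route is the paper's route. You use the same $G(s)=s^{\alpha}$, $H(t)=|t|^q/q$, Theorem~\ref{thm11}(2) combined with Theorem~\ref{thm21}, the same localisation and rescaling with the profile $(1+r^{\beta})^{-\vartheta}$ for sharpness, and the same ODE for equality.

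\textbf{The sign step.} The genuine gap is exactly the step you flagged, and neither of your fixes closes it. Let $A=\frac1q\int_M(\alpha r^{\alpha-1}+r^{\alpha}\Delta r)|u|^q\,d\mathfrak m$, let $B$ be the same integral with $\Delta r$ replaced by $(n-1)(\mathfrak{ct}_\kappa(r)-h)$, and let $J=\int_M r^{p'\alpha}|u|^{p'(q-1)}d\mathfrak m$. Theorem~\ref{thm21} gives $A\ge B$, but $|A|\ge|B|$ needs $B\ge0$. $A$ and $B$ need not have the same sign. Optimising $pcB-(p-1)c^{p'}J$ over $c>0$ only yields $\int_M\max\{F^{\ast p}(\pm du)\}d\mathfrak m\ge(\max\{B,0\})^p/J^{p-1}$.

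No argument can do better, because with the absolute value the inequality is false. On Euclidean $\mathbb R^n$ take $\kappa=0$ and any $h>0$; this is admissible, since $\mathbf S=0\le(n-1)h$. Then $\zeta_{0,h}(r)=-hr$, and the right-hand side of (\ref{uu-r}) is $\frac1q|\int_M(n+\alpha-1-(n-1)hr)r^{\alpha-1}|u|^q\,d\mathfrak m|$. Take $u=\phi(|x|/\lambda)$ with $0\ne\phi\in C_0^\infty((1,2))$. The left-hand side equals $c_1\lambda^{n+\alpha-1}$, while the right-hand side equals $\frac1q|c_2\lambda^{n+\alpha-1}-(n-1)h\,c_3\lambda^{n+\alpha}|$ with $c_3>0$. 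So (\ref{uu-r}) fails for large $\lambda$; there $A>0>B$.

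The paper's proof makes the same passage without comment: it just cites Theorems~\ref{thm11}(2) and~\ref{thm21}, as in (\ref{uuu}). So it shares the gap. What your $c>0$ argument does prove is (\ref{uu-r}) with the absolute value replaced by the positive part. That is the stated inequality whenever $B\ge0$, e.g.\ always when $\kappa=h=0$. Two parts are unaffected:
\begin{itemize}
\item your sharpness computation, which localises near $o$, where the weight tends to $n+\alpha-1>0$;
\item your equality analysis, since equality forces $B>0$.
\end{itemize}
Your check of homogeneity, and of the boundary term at infinity via $q(n-p)<p(n+\alpha-1)$, is more careful than the paper's.

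\textbf{The cut locus.} A second step you take for granted, as the paper does, is applying Theorem~\ref{thm21} on all of $M\setminus\{o\}$. It gives $\Delta r\ge(n-1)(\mathfrak{ct}_\kappa(r)-h)$ only for $0<r<\min\{\mathfrak i_o,\pi/\sqrt{\kappa}\}$. The $\Delta r$ produced by the integration by parts in Theorem~\ref{thm11} is distributional, and its singular part on the cut locus $\mathcal C_o$ is nonpositive, i.e.\ of the wrong sign for a lower bound. So you need $\mathcal C_o=\emptyset$, e.g.\ $M$ simply connected and $\kappa\le0$, as tacitly assumed at the start of Section~3.

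Without it the inequality fails even for $\kappa=h=0$. Take flat $\mathbb R\times T^2$ (so $\mathbf K=\mathbf S=0$, $n=3$) with $p=2$, $q=3$, $\alpha=1$, and $u_\lambda=(1+r^2/\lambda^2)^{-1}$ after truncation. The ratio of the left-hand side of (\ref{uu-r}) to $\int_M r^{\alpha-1}|u_\lambda|^q\,d\mathfrak m$ tends to the one-dimensional value $\alpha/q=1/3$. This is below the claimed $(n+\alpha-1)/q=1$.

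\textbf{A small slip.} The equality condition from Theorem~\ref{thm11} is $du=-\mathrm{sgn}(u)\,r^{\alpha/(p-1)}|u|^{(q-1)/(p-1)}dr$, not $-r^{\alpha/(p-1)}|u|^{(q-2)/(p-1)}u\,dr$. These agree only for $p=2$. Your ODE for $|u|$ and its solution are nevertheless correct.
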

\begin{proof} The proof is similar to that of Proposition \ref{prop41}. The difference lies in the choices of the functions $G$ and $H$ in Theorem \ref{thm11}. Let
$$G(s)=s^{\alpha}, \quad H(t)=\frac{|t|^q}{q}$$ for $s>0$ and $t\in \mathbb R$. Then $G$ and $H$ satisfy the assumptions in Theorem \ref{thm11} as in the proof of Proposition \ref{prop41}. Then (\ref{uu-r}) follows from Theorem \ref{thm11}(2) and Theorem \ref{thm21} for any $u\in C_0^\infty(M)$.

Next we show that the constant $\frac{n+\alpha-1}{p}$ in (\ref{4.1}) is sharp if $F^{\ast}(-dr)\leq F^{\ast}(dr)$.
We assume by contradiction that there exists a constant $C>\frac{n+\alpha-1}{q}$ such that
\beqn
\left(\int_{M}\max{F^{{\ast}p}(\pm du)}d\mathfrak{m} \right)^{\frac{1}{p}} \left(\int_{M}r^{p'\alpha}|u|^{p'(q-1)} d\mathfrak{m}\right)^{\frac{1}{p'}}\geq C \left|\int_{M}\left(1+\frac{n-1}{n+\alpha-1}\zeta_{\kappa, h}(r)\right)r^{\alpha-1}|u|^qd\mathfrak{m}\right|  \label{ineq-1} \eeqn for any $u\in C_0^\infty(M)$. By the same arguments as in the proof of Proposition \ref{prop41}, there is a positive constant $C_\varepsilon>\frac{n+\alpha-1}{q}$ such that
\beq
\left(\int_0^{\infty} \max\left\{F^{{\ast}p}(\pm du)\right\} r^{n-1}dr  \right)^{\frac{1}{p}} \left(\int_0^{\infty} r^{n+p'\alpha-1}|u|^{p'(q-1)} dr \right)^{\frac{1}{p'}}
\geq C_{\varepsilon}\int_0^{\infty}r^{n+\alpha-2}|u|^q dr. \label{ineq-4*}
\eeq
 For the case when $u\equiv 0$,  both sides of (\ref{ineq-4*}) are zero regardless of the choice of $C_{\varepsilon}$. We may choose a positive constant $C_\varepsilon$ such that $C_\varepsilon\leq \frac{n+\alpha-1}{q}$, which is a contradiction. Now we consider $u\in C_0^\infty(M)\backslash \{0\}$. In this case, if $F^{\ast}(-dr)\leq F^{\ast}(dr)$ and $u=u(r)$ is a radial function, then (\ref{ineq-4*}) implies that $I_1^{1/p}I_2^{1/p'}\geq C_{\varepsilon}I_3,$
 where $$I_1:= \int_0^\infty|u'(r)|^pr^{n-1}dr, \ \ \ \ I_2:= \int_0^\infty r^{n+p'\alpha-1}|u|^{p'(q-1)}dr, \ \ \ \ I_3:= \int_0^\infty r^{n+\alpha-2}|u|^{q}dr.$$
Let $\beta=\frac{p+\alpha-1}{p-1}>0$ as before and $\vartheta= \frac{p-1}{q-p}>0$. Choose $u=(1+r^\beta)^{-\vartheta}$ as a test function, which can be approximated by functions in $C_0^\infty(M)$. Recall that the Beta function $\mathcal B$ is given by
$$\mathcal B(\mathfrak p, \mathfrak q)=\int_0^\infty t^{\mathfrak p-1}(1+t)^{-(\mathfrak p+\mathfrak q)}dt=\int_0^1t^{\mathfrak p-1}(1-t)^{\mathfrak q-1}dt,  $$ which converges for any $\mathfrak p>0$ and $\mathfrak q>0$. Thus, for any $\sigma>-1$ and $\tau>\frac{\sigma+1}{\beta}$, we have
\beqn \int_0^\infty r^\sigma (1+r^\beta)^{-\tau}dr=\frac 1\beta\int_0^\infty t^{\frac{\sigma+1}{\beta}-1}(1+t)^{-\tau}dt=\frac 1\beta \mathcal B\left(\frac{\sigma+1}{\beta}, \tau-\frac{\sigma+1}{\beta}\right).\eeqn
From this, it is easy to check that
\beq I_1=\frac{(\beta\vartheta)^p}{\beta}\mathcal B(a_1, b_1), \ \ \ \ I_2=\frac 1\beta \mathcal B(a_2, b_2), \ \ \ \ I_3=\frac 1{\beta}\mathcal B( a_3, b_3), \label{III*} \eeq where $a_1=a_2=\frac{n+p'\alpha}{\beta}$, $a_3=\frac{n+\alpha-1}{\beta}$ and $b_1=b_2=\frac{p(q-1)}{q-p}-a_1$ and $b_3=q\vartheta-a_3$. In fact, by the assumptions on $p, q, \alpha$, we have $a_1>(n-p)/\beta >0$, $a_3>(n-p)/\beta >0$ and $a_1=a_3+1$. Consequently,
 \beqn b_1=\frac{p(q-1)}{q-p}-a_1=\frac{q(p-1)}{q-p}-a_3=q\vartheta-a_3= b_3 >0,\eeqn where the positivity follows from the assumption that  $p(n+\alpha-1)>q(n+p)$. From these and (\ref{III*}), one obtains
 \beqn C_\varepsilon &\leq & \frac{I_1^{1/p}I_2^{1/p'}}{I_3}=\beta\gamma\cdot \frac{\mathcal B(a_3+1, b_1)}{\mathcal B(a_3, b_1)}=\beta\gamma \cdot \frac{\Gamma(a_3+1)\Gamma(b_1)}{\Gamma(a_3+ b_1+1)}\cdot \frac{\Gamma(a_3+b_1)}{\Gamma(a_3)\Gamma(b_1)}\nonumber \\
 & = & \beta\vartheta \cdot \frac{a_3}{a_3+b_1}=\frac{n+\alpha-1}q,\eeqn which is a contradiction. This finishes the sharpness of $\frac{n+\alpha-1}q$.

Further, by Theorem \ref{thm11}, the equality in (\ref{uu-r}) holds if and only if $u$ and $r$ satisfy
\beq d|u|=- r^{\frac{\alpha}{p-1}}|u|^{\frac{q-1}{p-1}}dr, \ \ \max\left\{F^{\ast}(\pm dr)\right\}= F^{\ast}(dr), \ \  \Delta r=(n-1)\big(\mathfrak{ct}_\kappa(r)-h\big). \label{u-rho**} \eeq
Solving (\ref{u-rho**})$_1$  yields $|u|=\left(\beta\vartheta\right)^\vartheta \left(r^\beta+c\right)^{-\vartheta}$ for some $c\in \mathbb R$ with $r^\beta+c>0$. The rest follows from the proof of Proposition \ref{prop41}. The proof is finished.
\end{proof}

Based on Proposition \ref{prop42}, we obtain the following result.

\begin{prop}\label{prop44}Under the same assumptions as in Proposition \ref{prop42}, the following statements are equivalent.

{\rm (1)} For $1< p <q$, $-p+1< \alpha$,  $0< q(n-p)< p(n+\alpha-1)$, and $\kappa, h\in\mathbb{R}$ satisfying $h\geq 0$ if $\kappa=0$ and  $h\geq\sqrt{-\kappa}>0$ if $\kappa<0$, it holds
\beq
	&  &\left(\int_{M}\min\{F^{\ast p}(\pm du)\}d\mathfrak{m}\right)^{\frac{1}{p}} \left(\int_{M} r^{p'\alpha} |u|^{p'(q-1)} d\mathfrak{m}\right)^{\frac{1}{p'}}\nonumber \\
  &  &\geq \frac{n+\alpha-1}{q} \left|\int_{M}\left(1+\frac{n-1}{n+\alpha-1}\zeta_{\kappa, h}(r)\right)r^{\alpha-1}|u|^q d\mathfrak{m}\right|, \ \ \ \ \ \forall u\in C_0^\infty(M),  \label{uu-r*}
\eeq  where $\zeta_{\kappa, h}(r)$ is defined as in Proposition \ref{prop41}.

{\rm (2)}  $\Lambda=1$,  ${\mathbf K}(\nabla r, \cdot)=\kappa$ and ${\mathbf S}(\nabla r)=(n-1)h$.
\end{prop}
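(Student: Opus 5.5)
The plan follows the proof of Proposition \ref{prop42}, replacing the Gaussian-type test function by the Caffarelli–Kohn–Nirenberg extremal of Proposition \ref{prop43}.

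\emph{The direction (2) $\Rightarrow$ (1).} If $\Lambda=1$, then $\min\{F^{\ast p}(\pm du)\}=\max\{F^{\ast p}(\pm du)\}$. Moreover ${\mathbf K}(\nabla r,\cdot)=\kappa$ and ${\mathbf S}(\nabla r)=(n-1)h$ are exactly the hypotheses needed to invoke Proposition \ref{prop43}, so (\ref{uu-r}) becomes (\ref{uu-r*}).

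\emph{The direction (1) $\Rightarrow$ (2): setup.} Set $\beta=\frac{p+\alpha-1}{p-1}>0$ and $\vartheta=\frac{p-1}{q-p}>0$, and insert the test function $u=(1+r^\beta)^{-\vartheta}$ into (\ref{uu-r*}). This $u$ is Lipschitz and can be approximated by $C_0^\infty$ functions once the relevant integrals are finite. A direct computation gives $du=-\beta\vartheta\, r^{\beta-1}(1+r^\beta)^{-\vartheta-1}dr$. Hence
\[
\int_M\min\{F^{\ast p}(\pm du)\}d\mathfrak m\le (\beta\vartheta)^p\int_M r^{p'\alpha}(1+r^\beta)^{-p'(q-1)\vartheta'}\,d\mathfrak m ,
\]
with the exponents arranged so that $|u'|^p=(\beta\vartheta)^p r^{p'\alpha}|u|^{p'(q-1)}$. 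This identity holds because $\frac1{p-1}+1=\frac{q-1}{q-p}\cdot\frac{p}{\,\cdot\,}$; I will verify the bookkeeping, as in the derivation of (\ref{u-rho**})$_1$.

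\emph{Finiteness of the integrals.} Next I check that $\mathfrak J:=\int_M r^{p'\alpha}(1+r^\beta)^{-p\vartheta-p}\,d\mathfrak m$ and $\int_M r^{\alpha-1}|u|^q\,d\mathfrak m$ are finite. I use the co-area formula together with the area bound $A(S_t(o))\le\mathfrak L_{\mathfrak m}(o)(e^{-ht}\mathfrak s_\kappa(t))^{n-1}$, obtained from Theorem \ref{thm22} as in Proposition \ref{prop42}.
\begin{itemize}
\item Near $0$, integrability needs $p'\alpha+n>0$ and $n+\alpha-1>0$, both of which follow from $\alpha>1-p$ and $n>p$.
\item At infinity, when $\kappa<0$ and $h\ge\sqrt{-\kappa}$, the weight $(e^{-ht}\mathfrak s_\kappa)^{n-1}$ is bounded. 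When $\kappa=0$ and $h\ge0$, it is bounded by $t^{n-1}$, and the integrand then decays like a power. Convergence of that power is exactly the Beta-function condition $b_1=b_3>0$, i.e. $q(n-p)<p(n+\alpha-1)$.
\item The case $\kappa>0$ is compact by Myers' theorem.
\end{itemize}
This case analysis is the main obstacle. For $\kappa<0$ I need only integrability of $t^{p'\alpha}(1+t^\beta)^{-\cdots}$, and I expect the polynomial decay exponent to exceed $1$ under the assumptions. If it does not in some range, I would fall back on the bounded weight combined with the decay condition.

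\emph{The chain of inequalities.} Multiplying by $q$ and using $H(t)=|t|^q/q$, I mimic (\ref{rr}):
\[
q(\beta\vartheta)\mathfrak J\ \ge\ q\Big(\int\min F^{\ast p}\Big)^{1/p}\Big(\int r^{p'\alpha}|u|^{p'(q-1)}\Big)^{1/p'}\ \ge\ \Big|\int_M\big(\alpha+(n-1)r(\mathfrak{ct}_\kappa(r)-h)\big)r^{\alpha-1}|u|^q\,d\mathfrak m\Big| .
\]
By Theorem \ref{thm22}, $(n-1)(\mathfrak{ct}_\kappa-h)\ge\Delta r$, and the integrand is positive, so the right-hand side is at least $\big|\int_M(\alpha r^{\alpha-1}+r^\alpha\Delta r)|u|^q\,d\mathfrak m\big|$. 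Integrating by parts with (\ref{Laplacian}) turns this into $-\int_M r^\alpha\,d(|u|^q)(\nabla r)\,d\mathfrak m=q\beta\vartheta\int_M r^{\alpha+\beta-1}(1+r^\beta)^{-q\vartheta-1}\,d\mathfrak m$. The boundary term at $o$ vanishes by the capacity argument of Theorem \ref{thm11}.

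\emph{Forcing equality.} Since $\alpha+\beta-1=p'\alpha$ and $q\vartheta+1=p'(q-1)\vartheta$, the last quantity equals the first, so every inequality in the chain is an equality. This forces three things:
\begin{itemize}
\item $\min\{F^{\ast}(\pm du)\}=F^\ast(-du)$, i.e. $F^\ast(dr)\le F^\ast(-dr)$;
\item $\Delta r=(n-1)(\mathfrak{ct}_\kappa(r)-h)$, which by Theorem \ref{thm22} gives ${\mathbf K}(\nabla r,\cdot)=\kappa$ and ${\mathbf S}(\nabla r)=(n-1)h$;
\item applying the first bullet along geodesics $\gamma$ from $o$ and letting $t\to0^+$, exactly as at the end of Proposition \ref{prop42}, gives $\Lambda_o=1$.
\end{itemize}
Since $\Lambda=\Lambda_o$ by assumption, $\Lambda=1$, which is (2).
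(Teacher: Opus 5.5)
Your proposal is correct and follows the paper's proof essentially verbatim. It uses the same test function $u=(1+r^\beta)^{-\vartheta}$, the same bound $\min\{F^{\ast p}(\pm du)\}\le |u'|^p=(\beta\vartheta)^p r^{p'\alpha}u^{p'(q-1)}$, finiteness via the co-area formula and the area bound from Theorem \ref{thm22}, the chain (\ref{rr}) closed up by $\alpha+\beta-1=p'\alpha$, and the $t\to0^+$ argument giving $\Lambda_o=1$.

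One small correction: the factor $\alpha+(n-1)r(\mathfrak{ct}_\kappa(r)-h)$ is not positive in general (e.g.\ $\kappa=0$, $h>0$, $r$ large). Removing the absolute values should instead be justified by your integration-by-parts identity, which gives $\int_M(\alpha r^{\alpha-1}+r^\alpha\Delta r)|u|^q\,d\mathfrak m=q\beta\vartheta\,\mathfrak J>0$.
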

\begin{proof} (2) $\Rightarrow $ (1) follows directly from $\Lambda=1$ and Proposition \ref{prop43}. The proof of (1) $\Rightarrow (2)$  is similar to that of Proposition \ref{prop42}. The difference lies in the choice of test function  $u=(1+r^{\beta})^{-\vartheta}$ instead of  $u= e^{-\frac 1p r^{\beta}}$ in Proposition \ref{prop42}, where $\beta=\frac{p+\alpha-1}{p-1}>0$ and $\vartheta =\frac{p-1}{q-p}>0$ as before. In this case, we have
\beqn \int_M\min\{F^{\ast p}(\pm du)\}d\mathfrak m\leq \int_M|u'(r)|^pd\mathfrak m=(\beta\vartheta)^p\int_Mr^{p'\alpha} u^{p'(q-1)}d\mathfrak m.\eeqn
Let $I:=\int_M r^{p'\alpha} u^{p'(q-1)}d\mathfrak m$. By the assumptions on $p, q, \alpha$ and $h, k$ in (1), one obtains that $I<+\infty$, whose proof is similar to that of Proposition (\ref{prop42}). Thus the conclusion follows from the same arguments as in (\ref{rr}).\end{proof}

 With Propositions \ref{prop41}--\ref{prop44}, we begin to prove Theorems \ref{thm12}--\ref{thm13}.

\begin{proof} [Proof of Theorem \ref{thm12}] For any $x\in M$, let $r_x=d_F(x, \cdot)$ and $\gamma: [0, \ell]\rightarrow M$
 be a normal minimal geodesic from $x=\gamma(0)$, where $\ell\leq \min\{\mathfrak i_x, \pi/\sqrt{\kappa}\}$. Thus $\gamma(t)$ is not the cut point of $x$ for $t\in [0, \ell]$ and hence $\dot\gamma(t)=\nabla r_x(\gamma(t))$ with $F(\dot\gamma)=1$. Consequently, $\mathbf K(\nabla r_x)\leq \kappa$ and $\mathbf S(\nabla r_x)\leq (n-1)h$ from the assumption. By Propositions \ref{prop41} and \ref{prop43},  (\ref{U-max}) follows. Next we prove that the equivalence of (1)--(3).

 (1) $\Rightarrow$ (2). Obvious.

 (2) $\Rightarrow$ (3).  Assume that (2) holds. By Propositions \ref{prop41} and \ref{prop43}, we have (\ref{4.1}) in Case (I) and (\ref{uu-r}) in Case (II), in which the equalities hold if and only if $u$ is given by (\ref{u-func}),  $F^{\ast}(-dr_o)\leq F^{\ast}(dr_o)$, $\mathbf K(\nabla r_o)=\kappa$ and $\mathbf S(\nabla r_o)=(n-1)h$. For any $y\in S_oM$, let $\gamma :[0, r_o]\rightarrow M$ be a normal minimal geodesic with $\gamma(0)=o$ and $\dot\gamma(0)=y$. Then $\dot\gamma(t)=\nabla r_o$ and $F^\ast(-\mathcal L(\dot\gamma(t)))\leq F^\ast(\mathcal L(\dot\gamma(t)))$ as in the proof of Proposition \ref{prop42}. Letting $t\rightarrow 0^+$ yields $F^\ast(o, -\mathcal L(y))\leq F^\ast(o, \mathcal L(y))$. Since $y$ is arbitrary, substituting $y$ with $-y$ yields $F^\ast(o, \mathcal L(y))\leq F^\ast(o, -\mathcal L(y))$. Consequently,  $\Lambda^\ast_o=1$, equivalently, $\Lambda_o=1$, which means that $\Lambda=1$ by (\ref{LL})$_1$, i.e., $F$ is reversible. Moreover, $m(B_{r}^+(o))=\mathcal{V}_{o,\kappa, h, n}(r)$ by Corollary \ref{cor33}. From this, (\ref{LL})$_2$ and Corollary \ref{cor31}, we get (3).

 (3) $\Rightarrow$ (1). Since $F$ is reversible, by Propositions \ref{prop41} and \ref{prop43}, the constant $\frac{n+\alpha-1}{q}$ is sharp in (\ref{U-max}) for every $x\in M$. Moreover, the function $u$ given by (\ref{u-func}) satisfies the equality in (\ref{U-max}), i.e., (1) holds. This finishes the proof. \end{proof}

\begin{proof} [Proof of Theorem \ref{thm13}] The proof is similar to that of Theorem \ref{thm12} based on Propositions \ref{prop42} and \ref{prop44}. We leave it to readers. \end{proof}
\subsection{$L^p$-Hardy inequality}

Finally, we prove the $L^p (p>1)$-Hardy type inequality, generalizing the corresponding ones in both Riemannian case (\cite{DD}, \cite{KKPZ}) and Finslerian case (\cite{HKZ}) when $p=2$.

\begin{prop}\label{prop45} Let $(M,F,\mathfrak{m})$ be an $n(\geq 2)$-dimensional forward complete and noncompact Finsler measure space and $r= d_F(o, \cdot)$ be the distance function from a fixed $o\in M$ but arbitrarily.
Assume that ${\mathbf K}(\nabla r)\leq\kappa$ and ${\mathbf S}(\nabla r)\leq(n-1)h$ for some $\kappa, h\in\mathbb{R}$. Then, for any $1<p<n$ and $u\in C^{\infty}_0(M)$, we have
\beq
	\left(\int_{M}\max\{F^{\ast p}(\pm du)\}d\mathfrak{m}\right)^{\frac{1}{p}} \left(\int_{M} r^{-p} |u|^{p} d\mathfrak{m}\right)^{\frac{1}{p'}}  \geq \frac{n-p}{p} \left|\int_{M}\left(1+\frac{n-1}{n-p}\zeta_{\kappa, h}(r)\right)r^{-p}|u|^p d\mathfrak{m}\right|.\label{Hardy}
\eeq
Further, if $F^{\ast}(-dr)\leq F^{\ast}(dr)$, then the constant $\frac{n-p}{p}$ is sharp but never achieved.
\end{prop}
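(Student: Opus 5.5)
Following the pattern of Propositions \ref{prop41} and \ref{prop43}, I would apply Theorem \ref{thm11}(2) with
$$G(s)=s^{1-p},\qquad H(t)=\frac1p|t|^p ,$$
which is the choice $\alpha=1-p$, $q=p$. Then $p'\alpha=-p$, $G'(r)+G(r)\Delta r=(1-p)r^{-p}+r^{1-p}\Delta r$, and $|G(r)H'(u)|^{p'}=r^{-p}|u|^p$.

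First I would check the integrability hypotheses. Using (\ref{hat-sigma-est}), $G(r)^{p'}=r^{-p}$ lies in $L^1_{loc}$ near $o$ because $n-1-p>-1$, i.e. $p<n$. Likewise $G'(r)=(1-p)r^{-p}\in L^1_{loc}(\Omega)$. Since $G\neq 0$ and $H'(s)\neq0$ for $s\neq0$, Theorem \ref{thm11}(2) applies. Its right-hand side contains $\int_M [(1-p)r^{-p}+r^{1-p}\Delta r]\frac{|u|^p}{p}\,d\mathfrak m$. I would insert the comparison (\ref{Delta-r1}) from Theorem \ref{thm21}, which requires only the radial bounds $\mathbf K(\nabla r)\le\kappa$ and $\mathbf S(\nabla r)\le(n-1)h$. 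This gives the integrand lower bound
$$\frac1p r^{-p}|u|^p\big(n-p+(n-1)\zeta_{\kappa,h}(r)\big).$$
To pass from this pointwise lower bound to the absolute value in (\ref{Hardy}), I would argue as in (\ref{uuu}): the inequality is oriented so that the quantity inside the absolute value is dominated. Taking $p$-th roots then yields (\ref{Hardy}).

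For sharpness when $F^{\ast}(-dr)\le F^{\ast}(dr)$, I would argue by contradiction exactly as in Proposition \ref{prop41}. Rescaling $f(r,y)=u(Rr)$ into a small ball and letting $R\to\infty$ kills $\zeta_{\kappa,h}(s/R)$. This reduces a hypothetical larger constant $C_\varepsilon>\frac{n-p}{p}$ to the one-dimensional weighted inequality
$$\Big(\int_0^\infty|u'|^pr^{n-1}dr\Big)^{1/p}\Big(\int_0^\infty r^{n-1-p}|u|^pdr\Big)^{1/p'}\ge C_\varepsilon\int_0^\infty r^{n-1-p}|u|^pdr,$$
i.e. the radial Euclidean $L^p$-Hardy inequality with constant $C_\varepsilon^p$. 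I would refute this with the standard truncated family $u_\epsilon=\min\{1,r^{-(n-p)/p-\epsilon}\}$, suitably cut off at infinity, or alternatively with a family concentrating near $r^{-(n-p)/p}$. For these the ratio $\big(\int|u'|^pr^{n-1}\big)/\big(\int r^{n-1-p}|u|^p\big)$ tends to $\big(\frac{n-p}{p}\big)^p$ as $\epsilon\to0$, because both integrals blow up like $\epsilon^{-1}$ with matching leading coefficients. Hence $C_\varepsilon\le\frac{n-p}{p}$, a contradiction.

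For non-attainment, the equality characterization (\ref{uu-rr}) of Theorem \ref{thm11} forces $d|u|=-r^{-1}|u|\,dr$, so $|u|=c\,r^{-1}$ along rays. Together with equality in the Laplacian comparison, this is the only candidate, and $r^{-1}$, or more generally $r^{-(n-p)/p}$ in the limiting profile, fails to make $\int r^{-p}|u|^p\,d\mathfrak m$ finite near $o$ or at infinity. Hence $u\equiv0$ and the constant is never achieved.

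The main obstacle I expect is the sharpness computation. The Gamma/Beta test functions used earlier degenerate at $\alpha=1-p$ (since $\beta=0$), so a genuinely limiting family is needed, and its $\epsilon^{-1}$ asymptotics must be tracked carefully. If that becomes messy, I would instead obtain sharpness as the limit $q\to p$, $\alpha\to 1-p$ of the sharp constants $\frac{n+\alpha-1}{q}$ from Proposition \ref{prop43}.
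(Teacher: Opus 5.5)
Your derivation of (\ref{Hardy}) is the paper's own: the same $G(s)=s^{1-p}$ and $H(t)=\frac1p|t|^p$, with Theorem \ref{thm11}(2) combined with Theorem \ref{thm21}. It has a genuine gap exactly at the step you wave through, ``the inequality is oriented so that the quantity inside the absolute value is dominated.''

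Theorem \ref{thm11}(2) bounds the left side below by $|I|$, where $I=\frac1p\int_M[(1-p)r^{-p}+r^{1-p}\Delta r]|u|^p\,d\mathfrak m$. The comparison only gives $I\ge Y:=\frac1p\int_M\big(n-p+(n-1)\zeta_{\kappa,h}(r)\big)r^{-p}|u|^p\,d\mathfrak m$. From $I\ge Y$ you get $|I|\ge Y$, but $|I|\ge|Y|$ follows only when $Y\ge0$.

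$Y$ can be negative. For instance $\zeta_{0,h}(r)=-hr$ when $\kappa=0$, $h>0$, and there the stated inequality is false. On Euclidean $\mathbb R^n$ we have $\mathbf K=0\le\kappa$ and $\mathbf S=0\le(n-1)h$. Take $u=\phi(|x|/R)$ with $0\ne\phi\in C_0^\infty((1,2))$. The left side is a constant multiple of $R^{n-p}$. The right side is asymptotic to $\frac{(n-1)h}{p}\,\omega_{n-1}R^{n-p+1}\int_1^2 s^{n-p}|\phi|^p\,ds$ as $R\to\infty$.

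So this step cannot be repaired; the paper makes the same leap (cf.\ (\ref{uuu})). What the method proves is (\ref{Hardy}) with the signed integral on the right. That agrees with the stated form when the weight $n-p+(n-1)\zeta_{\kappa,h}$ is nonnegative, e.g.\ for $\kappa\le0$, $h\le0$, where $\zeta_{\kappa,h}\ge0$.

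The same step has a second gap. Theorem \ref{thm21} gives (\ref{Delta-r1}) only for $0<r\le\min\{\mathfrak i_o,\pi/\sqrt\kappa\}$ (at best pointwise off $\mathcal C_o$). But $I$ arises from integrating by parts over all of $M\setminus\{o\}$, so you need the inequality for the distributional $\Delta r$. Its singular part on $\mathcal C_o$ is nonpositive, so you need $\mathcal C_o=\emptyset$ (e.g.\ $M$ simply connected and $\kappa\le0$). This matters: on flat $\mathbb R^{n-1}\times S^1$ with $\kappa=h=0$ and $1<p<n-1$, take test functions independent of the circle factor and supported far from $o$. They only see the $(n-1)$-dimensional Hardy constant $\frac{n-1-p}{p}<\frac{n-p}{p}$.

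The remaining parts are fine.

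\textbf{Sharpness.} Your route differs from the paper's only in packaging. You use the rescaling of Proposition \ref{prop41} to reduce to the radial Euclidean Hardy inequality. You then test with $\min\{1,r^{-\gamma}\}$, where $\bar\beta:=\frac{n-p}{p}$ and $\gamma=\bar\beta+\epsilon$. For this family $\int_0^\infty|u'|^pr^{n-1}dr=\frac{\gamma^p}{p\epsilon}$ and $\int_0^\infty r^{n-1-p}|u|^p\,dr=\frac1{p\epsilon}+\frac1{n-p}$. The paper instead stays on $M$ and tests with $\psi\,[\max\{\varepsilon,r\}]^{-\bar\beta}$, using the $\log(1/\varepsilon)$ divergence of $\int r^{-n}\,d\mathfrak m$ near $o$. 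Either works, and since both families concentrate near $o$, where the weight is positive, the sign problem does not interfere.

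\textbf{Fallback.} Drop the fallback via limits of the constants in Proposition \ref{prop43}. Sharpness of a family of different inequalities does not pass to the limit without test functions for the limiting inequality.

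\textbf{Non-attainment.} Your integrability claim for $c\,r^{-1}$ fails in general, since $\int r^{-2p}\,d\mathfrak m<\infty$ near $o$ when $n>2p$. You do not need it: a function in $C_0^\infty(M)$ is bounded near $o$, which forces $c=0$.
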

\begin{proof} Let
 $$G(s)=s^{1-p}, \ \ \ \ H(t)=\frac 1p {|t|^p} $$ for any $s>0$ and $t\in \mathbb R$. Since $1<p<n$,  $G(r)$ and $H(t)$ satisfy the assumptions of Theorem \ref{thm11} similar to the proof in Proposition \ref{prop41}. Thus (\ref{Hardy}) follows from Theorems \ref{thm11}(2) and \ref{thm21}.

In the sequel, we show that the constant $\bar \beta:= \frac{n-p}{p}$ is sharp if $F^{\ast}(-dr)\leq F^{\ast}(dr)$.
We assume by contradiction that there exists a constant $C > \bar\beta$ such that
\beq
\left(\int_{M}\max\{F^{\ast p}(\pm du)\}d\mathfrak{m}\right)^{\frac 1p} \left(\int_{M} r^{-p} |u|^{p} d\mathfrak{m}\right)^{\frac{1}{p'}}  \geq C \left|\int_{M}\left(1+\frac{n-1}{n-p}\zeta_{\kappa, h}(r)\right)r^{-p}|u|^p d\mathfrak{m}\right|.\label{Lp-Hardy}
\eeq
For any $0<\varepsilon < R_1< R_2<\mathfrak{i}_{o}$, choose a cut-off function $\psi\in C_0^\infty(M)$ with $0<\psi<1$  such that $\psi$ is $1$ on $B_{R_1}$ and zero on $M\backslash B_{R_2}$
%\[\psi = \begin{cases} 1, & x \in B_{R_1}^+(o), \\ 0, & x \in M\backslash B_{R_2}^+(o), \end{cases}\]
with $F^{\ast}(-d\psi)<\frac 1{R_2-R_1}$ a.e. on $M$, where $B_{\bullet}: =B_{\bullet}^+(o)$.
Define $u_\varepsilon(x):=[\max\{\varepsilon, r(x)\}]^{-\bar\beta}$ and $u:=\psi u_\varepsilon\geq0$. Then $u$ can be approximated by functions in $C_0^\infty(M)$. Thus the first integral on LHS of (\ref{Lp-Hardy}) is estimated by
\beqn
\mathcal I_1 &:=&\int_{M} \max\left\{F^{{\ast}p}(\pm du)\right\}d\mathfrak{m}\\ \nonumber
&\leq &\int_{B_{R_1}\setminus B_{\varepsilon}}F^{{\ast}p}(\bar\beta r^{-\bar\beta-1}dr)d\mathfrak{m}+\int_{B_{R_2}\setminus B_{R_1}}F^{{\ast}p}(d(\psi r^{-\bar\beta}))d\mathfrak{m}\\ \nonumber
&: =&{\bar\beta}^p \mathfrak{J}_1+\mathfrak{J}_2,
\eeqn
where $\mathfrak{J}_1=\int_{B_{R_1}\setminus B_{\varepsilon}}r^{-n}d\mathfrak{m}$ and $\mathfrak{J}_2=\int_{B_{R_2}\setminus B_{R_1}}F^{{\ast}p}(d(\psi r^{-\bar\beta}))d\mathfrak{m}\geq 0$. Clearly, $\mathfrak{J}_2$ is finite and independent of $\varepsilon$. Moreover, the second integral on LHS of (\ref{Lp-Hardy}) is given by
\beqn
\mathcal I_2&:=&\int_{M} r^{-p}|u|^p d\mathfrak m \\
&=& \int_{B_{R_1}\backslash B_{\varepsilon}}r^{-n} d\mathfrak{m}+\int_{B_{R_2}\setminus B_{R_1}}|\psi|^pr^{-n}d\mathfrak{m}+\varepsilon^{-p\bar\beta}\int_{ B_{\varepsilon}}r^{-p} d\mathfrak{m}\\
&: =&\mathfrak{J}_1+\mathfrak{J}_3+\mathfrak{J}_4.
\eeqn
 Clearly, $\mathfrak{J}_3$ is finite and independent of $\varepsilon$. We now estimate $ \mathfrak{J}_1$ and $ \mathfrak{J}_4$. The co-area formula $(\ref{co-area})$ then yields
\beq
\mathfrak{J}_1=\int_\varepsilon^{R_1} dt\int_{S_{t}(o)}t^{-n}dA=\int_\varepsilon^{R_1}t^{-n}A(S_t(o))dt.\label{J1}
\eeq
Let $(r, y)$ be the polar coordinate system around $o$ such that $r=r(x)$ and $y\in S_oM$. From (\ref{sigma-limit}), there exists an $\varepsilon_0\in(0, \mathfrak{i}_{o})$ such that for any $t\in(0, \varepsilon_0)$,
\beqn  \frac{1}{2}e^{-\tau(y)}t^{n-1} \leq \hat{\sigma}_o(t,y) \leq 2e^{-\tau(y)}t^{n-1}.\label{sigma-bound}\eeqn Consequently,
\beqn A(S_t(o))=\int_{S_o(M)}\hat{\sigma}_o(t,y)d\nu_{o}(y) \leq 2\int_{S_o(M)}e^{-\tau(y)}t^{n-1}d\nu_{x_0}(y)=2\mathfrak{L_m}(o) t^{n-1},\eeqn and
$A(S_t(o))\geq \frac 12\mathfrak{L_m}(o) t^{n-1}$. Without loss of generality, we assume that $0<\varepsilon<R_1<R_2<\varepsilon_0<\mathfrak i_o$ (we choose sufficiently small $\varepsilon$, $R_1$, $R_2$ if needed). From these and (\ref{J1}), one obtains
\beq \mathfrak{J}_1\geq \frac 12\mathfrak{L_m}(o)\int_\varepsilon^{R_1}\frac 1tdt = \frac{1}{2} \left(\ln R_1- \ln\varepsilon\right)\mathfrak{L_m}(o)\rightarrow +\infty\ \ \ \  {\text as}\ \ \  \varepsilon\rightarrow 0^+,\label{J1*}\eeq
and
\beqn
\int_{ B_{\varepsilon}}r^{-p}d\mathfrak{m}= \int_0^\varepsilon t^{-p}A(S_t(o))dt\leq 2\mathfrak{L_m}(o) \int_0^\varepsilon t^{n-p-1}dt= \frac{2\varepsilon^{n-p}}{n-p}\mathfrak{L_m}(o).
\eeqn
Since $p\bar\beta=n-p>0$,  \beq \mathfrak{J}_4=\varepsilon^{-(n-p)}\int_{ B_{\varepsilon}(o)}r^{-p}d\mathfrak{m}\leq \frac{2}{n-p}\mathfrak{L_m}(o)< +\infty. \label{J4*}\eeq
On the other hand,  the integral on RHS of (\ref{Lp-Hardy}) is estimated by
\beq
\mathcal I_3&:=&\left|\int_{M}\left(1+\frac{n-1}{n-p}\zeta_{\kappa, h}(r)\right)r^{-p}|u|^p d\mathfrak{m}\right| \nonumber\\
&=& \left|\left(\int_{B_{\varepsilon}(o)}+\int_{B_{R_1}(o)\setminus B_{\varepsilon}(o)}+\int_{B_{R_2}(o)\setminus B_{R_1}(o)}\right)\left(1+\frac{n-1}{n-p}\zeta_{\kappa, h}(r)\right)r^{-p}|u|^p d\mathfrak{m}\right| \nonumber\\
&\geq & |A_2|-|A_1|-\mathfrak{J}_5, \label{I3}
\eeq
where $A_1, A_2$ and $\mathfrak J_5$ are respectively the first, second and third integrals in the second equality. Note that $\mathfrak J_5$ is finite and independent of $\varepsilon$. Since $\lim_{r\rightarrow 0^+}\zeta_{\kappa, h}(r))=0$, there exists a small number, also denoted by $\varepsilon_0$, in $(0, \mathfrak{i}_{o})$ such that $|\frac{n-1}{n-p}\zeta_{\kappa, h}(r)|\leq\eta$ for any small $0<\eta<1$ and $r\in(0, \varepsilon_0)$. Therefore,
\beqn |A_1|&\leq & (1+\eta)\varepsilon^{-p\bar\beta}\int_{B_{\varepsilon}(o)}r^{-p} d\mathfrak{m}=(1+\eta)\mathfrak{J}_4, \nonumber \\
|A_2|&\geq &  (1-\eta)\int_{B_{R_1}(o)\backslash B_{\varepsilon}(o)}r^{-p}|u|^p d\mathfrak{m}=(1-\eta)\mathfrak{J}_1.
\eeqn
Plugging these in (\ref{I3}) gives
$\mathcal I_3\geq(1-\eta)\mathfrak{J}_1-(1+\eta)\mathfrak{J}_4-\mathfrak{J}_5.$
From this, (\ref{J1*})--(\ref{J4*}), $\mathfrak J_5<+\infty$ and (\ref{Lp-Hardy}), one obtains
\beqn
C &\leq &\frac{(\mathcal I_1)^{\frac{1}{p}}(\mathcal I_2)^{\frac{1}{p'}}}{\mathcal I_3}
\leq \lim\limits_{\varepsilon\rightarrow0^+}\frac{({\bar\beta}^p\mathfrak{J}_1+\mathfrak{J}_2)^{\frac{1}{p}}(\mathfrak{J}_1+\mathfrak{J}_3
+\mathfrak{J}_4)^{\frac{1}{p'}}}{(1-\eta)\mathfrak{J}_1-(1+\eta)\mathfrak{J}_4-\mathfrak{J}_5}\nonumber \\
&=& \lim\limits_{\varepsilon\rightarrow 0^+}\frac{\bar\beta\mathfrak{J}_1\left(1+\frac{\mathfrak{J}_2}{{\bar\beta}^p\mathfrak{J}_1}\right)^{\frac{1}{p}}
\left(1+\frac{\mathfrak{J}_3}{\mathfrak{J}_1}+\frac{\mathfrak{J}_4}{\mathfrak{J}_1}\right)^{\frac{1}{p'}}}{(1-\eta)\mathfrak{J}_1-(1+\eta)\mathfrak{J}_4-\mathfrak{J}_5}
=\frac{\bar\beta}{1-\eta}.
\eeqn
Letting $\eta\rightarrow0^+$ leads to $C \leq \lim_{\eta\rightarrow0^+}\frac{\bar\beta}{1-\eta}=\bar\beta$,
which contradicts the assumption, and hence the claim is proved. Moreover,  by Theorem \ref{thm11}, the equality in (\ref{Hardy}) holds if and only if
\beqn d|u|=-r^{-1}|u|dr, \ \ \   \max\left\{F^{\ast}(\pm dr)\right\}= F^{\ast}(dr), \ \  \Delta r=(n-1)\big(\mathfrak{ct}_\kappa(r)-h\big).  \eeqn
Solving the first equation yields $|u|=cr^{-1}$ for some $c>0$. However $u\notin C_0^\infty(M)$ since $u$ is singular at $o$. Hence there is not a function in $C_0^\infty(M)$ satisfying the equality in (\ref{Hardy}).
\end{proof}

\begin{proof} [Proof of Theorem \ref{thm14}] For any $x\in M$, let $r_x=d_F(x, \cdot)$. Then $\mathbf K(\nabla r_x)\leq \kappa$ and $\mathbf S(\nabla r_x)\leq (n-1)h$ for some $\kappa, h\in \mathbb R$ by the assumption. From Proposition \ref{prop45}, we have (\ref{Hardy}) in which $r$ is replaced by $r_x$ for any $u\in C_0^\infty(M)$, which implies (\ref{H-Ineq}). The sharpness of  $\frac{n-p}p$ follows from $\Lambda=1$ and Proposition \ref{prop45}.
 \end{proof}

 \section{Examples}

 In this section, we give some special Finsler measure spaces on which the $L^p$-uncertainty principles hold.

 \begin{exam} {\rm The well-known Funk metric $\mathcal F$ is defined by $\mathcal F=\alpha+\beta$ on a unit open ball $\mathbb B^n$ in $\mathbb R^n$, where
$\alpha$ and $\beta$ are respectively a Riemannian metric and a 1-form on $\mathbb B^n$ given by
 $$\alpha=\frac {\sqrt{(1-|x|^2)|y|^2+\langle x , y\rangle^2}}{1-|x|^2}, \ \ \  \ \ \beta=\frac{\langle x , y\rangle}{1-|x|^2}, \ \  x\in \mathbb B^n,
 \ y\in T_x\mathbb B^n, $$ where $\langle , \rangle$ is the standard Euclidean inner product in $\mathbb R^n$ and $|\cdot|$ is the norm induced by $\langle , \rangle$.
  It is easy to see that $\beta$ is closed. Thus $F$ is projectively flat. Moreover, $\mathcal F$ has constant flag curvature $\mathbf K=\kappa=-\frac 14$ and constant S-curvature $\mathbf S=\frac {n+1}{2}$, i.e., $h=\frac{n+1}{2(n-1)}$,  with respect to the measure $\mathfrak m_{BH}$. Further, $\mathcal F$ is a forward complete (but not backward complete) Randers metric with infinite reversibility (\cite{Sh}, \cite{HKZ}). This Funk space $(\mathbb B^n, \mathcal F, \mathfrak m_{BH})$ satisfies the curvature assumptions in Theorems \ref{thm12}--\ref{thm14}. Thus $L^p$-uncertainty principles hold on $(\mathbb B^n, \mathcal F, \mathfrak m_{BH})$. As an example, we shall check the $L^p$-Hardy inequality (see Theorem \ref{thm14}) on the Funk space $(\mathbb B^n, \mathcal F, \mathfrak m_{BH})$.

It is known that the distance function for $\mathcal F$ is $r:= d_F(o, x)=-\log (1-|x|)\in (0, +\infty)$ for any $x\in \mathbb B_1^n$ and $d\mathfrak m_{BH}=dx$ (Euclidean volume form) (\cite{Sh}, \cite{HKZ}). In polar coordinates $(t, \theta)$, where $t=|x|$, we have $t=1-e^{-r}\in (0, 1)$ and $d\mathfrak m_{BH}=t^{n-1}dt d\theta$. For any $0<a\in \mathbb R$, let \beqn u_a:=-e^{-ar}=- (1-t)^a. \eeqn   Note that $F^{\ast}(x, \xi)=|\xi|-\langle x, \xi\rangle$ for any $\xi\in T^{\ast}_x\mathbb B^n$.  We have
\beq  F^{\ast}(du_a)=a(1-t)^a, \ \ \ \  F^{\ast}(-du_a)=a(1-t)^{a-1}(1+t). \label{FF*1}\eeq
Clearly, $ F^{\ast}(du_a)\neq  F^{\ast}(-du_a)$ and $\max\{F^{\ast}(\pm du_a)\}=F^{\ast}(-du_a)=a(1-t)^{a-1}(1+t)$. It is easy to see that $u_a\in \mathcal W_0^{1, p}$ (resp. $W_0^{1, p}(M)$) for $a>(p-1)/p$ (cf. (\ref{A}) below).  Next we check (\ref{Hardy}) in the case of $p=2$ for simplicity and hence $n>2$ by the assumption on $p$ in Theorem \ref{thm14}. In this case, we have $a>1/2$. The verification is similar for general $p>1$.

In fact, let $\omega_{n-1}$ be the area of unit sphere $\mathbb S_1^{n-1}$. Since $a>1/2$, we have
\beq A:&=&\int_{\mathbb B_1^n} \max\{F^{\ast 2}(\pm du_a)\}d\mathfrak m_{BH}=a^2\omega_{n-1}\int_0^1(1-t)^{2a-2}(1+t)^2 t^{n-1}dt \nonumber \\
 &=& a^2\omega_{n-1}\big\{\mathcal B(n, 2a-1)+ 2\mathcal B(n+1, 2a-1)+\mathcal B(n+2, 2a-1)\big\}\nonumber \\
 &=& a^2\omega_{n-1}\frac{4(n+a)^2-2a}{(n+2a)(n+2a-1)}\mathcal B(n, 2a-1),\label{A}
 % \nonumber \\ &=&  a^2\omega_{n-1}\frac{4(n+a)^2-2a}{(n+2a)(n+2a-1)}\cdot\frac{n-1}{n+2a-1}\cdot\frac{n-2}{n+2a-2}\mathcal B(n-2, 2a)
 \eeq where  $\mathcal B(\mathfrak p, \mathfrak q)$ is the Beta function with $\mathcal B(\mathfrak p, \mathfrak q)=\mathcal B(\mathfrak q, \mathfrak p)=\frac{\mathfrak q-1}{\mathfrak p+\mathfrak q-1}\mathcal B(\mathfrak p, \mathfrak q-1)$ for $\mathfrak p>0$ and $\mathfrak q>1$. Similarly,
\beqn B:&=&\int_{\mathbb B_1^n}r^{-2}u_a^2d\mathfrak m_{BH}=\omega_{n-1}\int_0^1\big(\log(1-t)\big)^{-2}(1-t)^{2a}t^{n-1}dt \nonumber \\
&\geq & \omega_{n-1}\int_0^1t^{n-3}(1-t)^{2a+2}dt=\omega_{n-1}\mathcal B(n-2, 2a+3),  \eeqn where we used the inequality $-\log(1-t)\leq t/(1-t)$ for any $t\in (0, 1)$. Since
$$\zeta(r):= \zeta_{-\frac 14, \frac{n+1}{2(n-1)}}(r)=r\left(\frac 12\coth\frac r2-\frac{n+1}{2(n-1)}\right)-1=r\left(\frac 1{1-e^{-r}}-\frac n{n-1}\right)-1, $$
we have $$1+\frac{n-1}{n-2}\zeta(r)=\frac 1{n-2}\left\{-1-(n-1)t^{-1}\log(1-t)+n\log(1-t)\right\}.  $$
From this,  the integral on RHS of (\ref{Hardy}) is given by
\beq C:&=&\int_{\mathbb B_1^n}\left(1+\frac{n-1}{n-2}\zeta(r)\right)r^{-2}u_a^2d\mathfrak m_{BH} \nonumber \\
&=& \frac {\omega_{n-1}}{n-2}\Big\{-\int_0^1t^{n-1}(1-t)^{2a}\big(\log(1-t)\big)^{-2}dt-(n-1)\int_0^1t^{n-2}(1-t)^{2a}\big(\log(1-t)\big)^{-1}dt\nonumber \\
& & +n\int_0^1t^{n-1}(1-t)^{2a}\big(\log(1-t)\big)^{-1}dt\Big\}.\label{C} \eeq
Integrating the first term by parts and using $n>2$ yield
\beqn -\int_0^1t^{n-1}(1-t)^{2a}\big(\log(1-t)\big)^{-2}dt=\int_0^1\Big[(n-1)-(n+2a)t\Big]t^{n-2}(1-t)^{2a}\big(\log(1-t)\big)^{-1}dt.\eeqn
Plugging this in (\ref{C}) and using the inequality $\log(1-t)\leq -t$ for any $t\in (0, 1)$  yields
\beqn 0<C&=&-\frac {2a\omega_{n-1}}{n-2}\int_0^1t^{n-1}(1-t)^{2a}\big(\log(1-t)\big)^{-1}dt\nonumber \\
&\leq &\frac {2a\omega_{n-1}}{n-2}\int_0^1t^{n-2}(1-t)^{2a}dt=\frac {2a\omega_{n-1}}{n-2}\mathcal B(n-1, 2a+1).  \eeqn
Consequently, one obtains
\beqn \frac {AB}{C^2}\geq \frac {(n-2)^2}4 \cdot \frac{4(n+a)^2-2a}{(n+2a)(n+2a-1)}\cdot \frac {\mathcal B(n, 2a-1)\mathcal B(n-2, 2a+3)}{\mathcal B^2(n-1, 2a+1)}.\eeqn
Observe that
\beqn & &\frac{4(n+a)^2-2a}{(n+2a)(n+2a-1)}\cdot \frac {\mathcal B(n, 2a-1)\mathcal B(n-2, 2a+3)}{\mathcal B^2(n-1, 2a+1)}\nonumber \\
%& &=\frac{4(n+a)^2-2a}{(n+2a)(n+2a-1)}\cdot \frac{n-1}{n+2a-2}\cdot \frac{n+2a-2}{2a-1}\cdot \frac{n+2a-1}{2a}\cdot \frac{n+2a+1}{n-2} \cdot \frac{2a+2}{n+2a+1} \cdot \frac{2a+1}{n+2a}\nonumber \\
&  &= \frac{(a+1)(2a+1)}{a(2a-1)}\cdot \frac{(n-1)\big[4(n+a)^2-2a\big]}{(n-2)(n+2a)^2}>1 \eeqn
for any $a>1/2$, in which the last inequality holds due to each factor in the numerator is greater than the corresponding one in the denominator. Thus $AB\geq \frac{(n-2)^2}4C^2$. Namely, the $L^2$-Hardy inequality in the form (\ref{Hardy}) holds for $u_a=-e^{ar}$ ($a>\frac 12$) on Funk spaces $(M, \mathcal F, \mathfrak m_{BH})$ although the $L^2$--Hardy inequality on $(M, \mathcal F, \mathfrak m_{BH})$ in the form as in Example 6.2 of \cite{HKZ} fails. } \end{exam}

\begin{exam} {\rm As a modification of the Funk metric $\mathcal F$, Kaj\'ant\'o-Krist\'aly considered a family of the interpolated Funk-type metics $F_k=\alpha+\beta_k$ on $\mathbb B^n\subset \mathbb R^n$, where $\alpha$ is the one as before and $\beta_k: =\frac{k\langle x, y\rangle}{1-|x|^2}$ for $k\in [0, 1)$ (\cite{KK}). It is easy to see that $F_k$ is forward complete with finite reversibility $\Lambda=\frac{1+k}{1-k}$. In \cite{KK}, the authors proved that the flag curvature and S-curvature of $F_k$ with respect to $\mathfrak m_{BH}$ are estimated as
\beqn -\frac 1{(1-k)^2}<\mathbf K_k<-\frac 1{(1+k)^2} \ \ {\rm{and}}\ \ 0 <\mathbf S_k <\frac{(n+1)k}{2(1-k^2)}\eeqn up to an isometry.  If $k\in (\frac{2(n-1)}{3n-1}, 1)$, then $(\mathbb B^n, F_k, \mathfrak m_{BH})$ satisfies the assumptions of Theorem \ref{thm12} and \ref{thm14}. Consequently, $L^p$--uncertainty principles hold in Theorem \ref{thm12} and \ref{thm14} on such spaces $(\mathbb B^n, F_k, \mathfrak m_{BH})$. }\end{exam}

 \section{Appendix: The spectral gap estimates for Finsler $p$-Laplacian}

 In this section, we further seek applications of Theorem \ref{thm11} as mentioned in the introduction. We first prove the $L^p$-Caccioppoli inequality (or the reverse $L^p$-Poincar\'e inequality) on Finsler measure spaces. As an application, we obtain the first Dirichlet eigenvalue estimate for Finsler $p$-Laplacian. In fact,  the Caccioppoli inequality plays an important role in the energy estimate or gradient estimate for harmonic functions (cf. \cite{HL}).

\begin{thm}[Caccioppoli inequality]\label{thm15}
Let $(M, F, \mathfrak{m})$ be an $n (\geq 2)$-dimensional forward complete and noncompact Finsler measure space and $\Omega$ be a domain in $M$. For any $x\in \Omega$ and $1<p<n$,  let $r$ be the distance function on $(M, F)$.  If $\Delta r\leq0$ in $\Omega$ (in the weak sense), then
\beq
	\int_{\Omega}\max\{F^{\ast p}(\pm du)\}d\mathfrak{m} \geq \left(\frac{p-1}{p}\right)^p \int_{\Omega} \frac{|u|^p}{r^p} d\mathfrak{m}. \ \ \ \forall u\in C^{\infty}_0(\Omega), \label{cocci}\eeq
and the constant $\left(\frac{p-1}{p}\right)^p$ is sharp if $F^{\ast}(-dr)\leq F^{\ast}(dr)$. Further, the equality holds if and only if
$ |u|=cr^{\frac{p-1}p}$ for some $c>0$ and $r$ is a Finsler harmonic function with  $F^{\ast}(-dr)\leq F^{\ast}(dr)$.

In particular, if  $\Lambda<\infty$, $\Omega$ is bounded  and $\Delta\mathbf r\leq0$ in $\Omega$ (in the weak sense) for $\mathbf r(\cdot):= d_F( \partial\Omega, \cdot)$, then the first Dirichlet eigenvalue of $p$-Laplacian on $\Omega$ is given by
\begin{equation*}
	\lambda_{1,p} (\Omega)=\inf\limits_{u\in C_0^\infty(\Omega)\setminus\{0\}}\frac{\int_{\Omega}F^{{\ast}p}(du)d\mathfrak{m}}{\int_\Omega|u|^pd\mathfrak{m}}\geq \left(\frac{p-1}{p}\right)^p \left(\frac{1}{\Lambda R_\Omega}\right)^p,
\end{equation*}
where $R_\Omega=\sup_{x\in \Omega}\mathbf r(x)$ is the inradius of $\Omega$.
\end{thm}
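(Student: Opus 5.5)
The natural route is Proposition \ref{prop31}(1) (in the form (\ref{3.1})) on $\Omega$ with $\rho=r$, which has $F^{\ast}(dr)=1$ a.e., together with the choices
\[
G(s)=-\Big(\frac{p-1}{p}\Big)^{p-1}s^{1-p},\qquad H(t)=\frac1p|t|^p .
\]
Then $G'(s)=(p-1)\big(\frac{p-1}{p}\big)^{p-1}s^{-p}$ and $H'(t)=|t|^{p-2}t$. The local integrability conditions $G(r)\in L^{p'}_{loc}$ and $G'(r)\in L^1_{loc}$ hold away from the base point, and near it they hold by (\ref{hat-sigma-est}) because $p<n$. If the base point lies in $\Omega$, we pass through the capacity argument of Theorem \ref{thm11}, i.e. Propositions \ref{prop32}--\ref{prop33}.

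The right-hand side of (\ref{3.1}) is computed term by term.
\begin{itemize}
\item The first term is $p\int_\Omega G'(r)H(u)\,d\mathfrak m=(p-1)\big(\frac{p-1}{p}\big)^{p-1}\int_\Omega r^{-p}|u|^p\,d\mathfrak m$.
\item The Laplacian term is $p\int_\Omega G(r)\Delta r\,H(u)\,d\mathfrak m=-\big(\frac{p-1}{p}\big)^{p-1}\int_\Omega r^{1-p}|u|^p\Delta r\,d\mathfrak m\ge 0$, since $\Delta r\le 0$ weakly. The weak formulation is legitimate because $r^{1-p}|u|^p$ is an admissible test function after approximation.
\item The penalty term is $(p-1)\int_\Omega|G(r)H'(u)|^{p'}\,d\mathfrak m=(p-1)\big(\frac{p-1}{p}\big)^{p}\int_\Omega r^{-p}|u|^p\,d\mathfrak m$.
\end{itemize}
Combining them, the net constant is $(p-1)\big(\frac{p-1}{p}\big)^{p-1}\big(1-\frac{p-1}{p}\big)=\big(\frac{p-1}{p}\big)^p$, which gives (\ref{cocci}).

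For equality, all steps must be tight. The Laplacian term must vanish, i.e. $\Delta r=0$, so $r$ is Finsler harmonic. Condition (\ref{u-rho})$_2$ gives $F^{\ast}(-dr)\le F^{\ast}(dr)$. Condition (\ref{u-rho})$_1$ reads $d|u|=\frac{p-1}{p}\,r^{-1}|u|\,dr$. Solving this ODE yields $|u|=c\,r^{(p-1)/p}$; I will double-check the sign of this exponent carefully.

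Sharpness follows the pattern of Proposition \ref{prop45}. Assuming a larger constant $C$, we test with truncations of the extremal profile $r^{(p-1)/p}$ cut off near the base point and near $\partial\Omega$, use $F^{\ast}(-dr)\le F^{\ast}(dr)$ so that $\max F^{\ast}(\pm du)=|u'(r)|$, and show that the logarithmically divergent main parts dominate the bounded error terms. I expect this to be the main obstacle, because the Hardy profile yields $\int r^{-1}\,dr$-type divergence only after choosing the truncation exponent $(p-1)/p\pm\epsilon$ properly, and the region of integration must see the right volume growth.

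The eigenvalue bound uses $\mathbf r=d_F(\partial\Omega,\cdot)$ in place of $r$; it still satisfies $F^{\ast}(d\mathbf r)=1$ a.e. We apply the same inequality and use $\mathbf r\le R_\Omega$, so that $\int|u|^p\mathbf r^{-p}\ge R_\Omega^{-p}\int|u|^p$. Finally, $\max F^{\ast}(\pm du)\le\Lambda F^{\ast}(du)$ converts the left-hand side into $\Lambda^p\int F^{\ast p}(du)$, giving $\lambda_{1,p}\ge\big(\frac{p-1}{p}\big)^p(\Lambda R_\Omega)^{-p}$.
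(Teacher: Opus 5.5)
Your proof of (\ref{cocci}), your equality analysis and your eigenvalue bound follow the paper exactly. You use the same $G(s)=-(\frac{p-1}{p})^{p-1}s^{1-p}$ and $H(t)=\frac1p|t|^p$ in Theorem \ref{thm11}(1) (equivalently Proposition \ref{prop31}), you discard the nonnegative Laplacian term, and at the end you use $\mathbf r\le R_\Omega$ together with $\max\{F^{\ast}(\pm du)\}\le\Lambda F^{\ast}(du)$. The exponent in $|u|=cr^{(p-1)/p}$ is correct.

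The genuine gap is sharpness. You only outline it, and the mechanism you outline is not available.
\begin{itemize}
\item \textbf{No divergence at the base point.} For the profile $u=r^{(p-1)/p}$, both $|u|^pr^{-p}$ and $(\frac{p}{p-1})^p\max\{F^{\ast p}(\pm du)\}$ equal $r^{-1}$. Near a base point $o$ one has $d\mathfrak m\approx e^{-\tau(y)}r^{n-1}dr\,d\nu_o(y)$, so $\int r^{-1}\,d\mathfrak m$ converges there for $n\ge 2$. Proposition \ref{prop45} worked only because its profile $r^{-(n-p)/p}$ produces $\int r^{-n}\,d\mathfrak m$.
\item \textbf{$\Omega$ cannot approach $o$.} By Theorem \ref{thm21} with curvature bounds valid on a small ball, $\Delta r\ge(n-1)(\mathfrak{ct}_\kappa(r)-h)>0$ for small $r$. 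So $\Delta r\le 0$ forces $\Omega\cap B^+_\delta(o)=\emptyset$, which also makes your capacity detour unnecessary. Then $r^{-p}$ is bounded on $\Omega$, and the cut-off at $\partial\Omega$ costs a fixed amount of energy that nothing dominates.
\item \textbf{No test function can rescue a point-distance $r$.} Take a rotationally symmetric plane $dr^2+f(r)^2d\theta^2$ with $f'\le 0$ on $[a,a+\epsilon]$. Then $\Delta r=f'/f\le 0$ on $\Omega=\{a<r<a+\epsilon\}$. The one-dimensional $p$-Poincar\'e inequality in $r$, together with $r\ge a$, gives a best constant $\gtrsim (a/\epsilon)^p$ on $\Omega$.
\end{itemize}

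A logarithmic divergence of $\int r^{-1}\,d\mathfrak m$ occurs only when $r$ vanishes along a hypersurface, i.e. for $\mathbf r=d_F(\partial\Omega,\cdot)$. There the standard argument tests with $\mathbf r^{(p-1)/p+\epsilon}$, cut off away from $\partial\Omega$. Both sides then blow up like $1/(p\epsilon)$ and the ratio tends to $(\frac{p-1}{p})^p$, given some boundary regularity and $F^{\ast}(-d\mathbf r)\le F^{\ast}(d\mathbf r)$.

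For comparison, the paper settles sharpness in one line by inserting $u=cr^{1-1/p}$ itself, whose ratio is exactly $(\frac{p-1}{p})^p$. It does not address that this $u$ is not in $C_0^\infty(\Omega)$ and need not vanish on $\partial\Omega$. So you are right that this step needs real work, but your proposal does not supply it.
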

\begin{proof}  Let us choose  the functions $G(s)=-\left(\frac{p-1}{p}\right)^{p-1}s^{1-p}$ and $H(t)=\frac 1p |t|^p$. Then $G(s)$ and $H(t)$ satisfy the assumptions in  Theorem \ref{thm11} as before. Applying Theorem \ref{thm11}(1) on $\Omega$ and using $\Delta r\leq0$ in $\Omega$,  we have
\beqn \int_{\Omega}\max\{F^{\ast p}(\pm du)\}d\mathfrak{m}
% \geq \int_{\Omega}\left(\frac{(p-1)^p}{p^{p-1}}r^{-p}-\left(\frac{p-1}{p}\right)^{p-1}r^{1-p}\Delta r\right)|u|^pd\mathfrak{m}-(p-1)\int_{\Omega}\left(\frac{p-1}{p}\right)^{p}r^{-p}|u|^{p}d\mathfrak{m}\\
&\geq & \int_{\Omega}\frac{(p-1)^p}{p^{p-1}}r^{-p}|u|^p d\mathfrak{m}-(p-1)\int_{\Omega}\left(\frac{p-1}{p}\right)^{p}r^{-p}|u|^{p}d\mathfrak{m}\nonumber \\
&= &\left(\frac{p-1}{p}\right)^{p}\int_{\Omega}r^{-p}|u|^{p}d\mathfrak{m},\eeqn in which the equality holds if and only if
\beqn d|u|=\frac {p-1}{p}|u|r^{-1}dr, \ \ \  \max\{F^{\ast p}(\pm dr)\}=F^{\ast}(dr), \ \ \ \Delta r=0 \ ({\rm{in\  the\  weak\  sense}}).\eeqn Solving the first equation yields $|u|=cr^{\frac{p-1}p}$ for some $c>0$. The second and third equalities mean that $r$ is a Finsler harmonic function with  $F^{\ast}(-dr)\leq F^{\ast}(dr)$.

Next we assume by contradiction that there exists a constant $C>\left(\frac{p-1}{p}\right)^p$ such that
\beqn
	\int_{\Omega}\max\{F^{\ast p}(\pm du)\}d\mathfrak{m} \geq C \int_{\Omega} \frac{|u|^p}{r^p} d\mathfrak{m}
\eeqn for any $u\in C^{\infty}_0(\Omega)$.
If  $F^{\ast}(-dr)\leq F^{\ast}(dr)$, then taking $u=cr^{1-\frac 1{p}}$ for some $c\in\mathbb{R}$ in the above inequality yields
 $C\leq\left(\frac{p-1}{p}\right)^p$, which contradicts the assumption. Finally, the estimate of $\lambda_{1, p}(\Omega)$ follows from (\ref{cocci}) and $\max\{F^{{\ast}p}(\pm du)\}\leq \Lambda^p F^{{\ast}p}(du)$.
\end{proof}

The well known McKean's spectral gap estimate for the Laplacian says that the first eigenvalue $\lambda_1\geq \frac{(n-1)^2}4k$ on a complete noncompact Riemanninan manifold with the sectional curvature $K\leq -k (0<k\in \mathbb R)$ (\cite{Mc}). Wu-Xin generalized this estimate to the case of Finsler Laplacian  (\cite{WX}). Now we present McKean's spectral gap estimate for Finsler $p(>1)$-Laplacian, which extends Wu-Xin's result in a different way.
\begin{thm}\label{thm16}
Let $(M,F,\mathfrak{m})$ be an $n (\geq 2)$-dimensional forward complete and noncompact Finsler measure space. Assume that $\mathbf K\leq -\kappa^2$ and $\mathbf S\leq(n-1)h$ for some constants $\kappa> 0$ and $h <\kappa$. Then, for any  $p>1$ and $u\in C^{\infty}_0(M)\backslash\{0\}$, we have
\beqn
	\int_{M}\max\left\{F^{\ast p}(\pm du)\right\}d\mathfrak{m} \geq \left(\frac{2(n-1)(\kappa-h)}{p}\right)^p \frac{ \left(\int_{M} |\mathfrak{s}_c(u)|^p d\mathfrak{m}\right)^p } {\left(\int_{M} |\mathfrak{s}_c^{p-2}(u)\mathfrak{s}_c(2u)|^{p'} d\mathfrak{m}\right)^{p-1}}, \label{ineq-eigen}
\eeqn
for any $c\in \mathbb{R}$. In particular, if $\Lambda<\infty$, then we have McKean's spectral gap estimate for the Finsler $p$-Laplacian
 \beqn
	\lambda_{1,p} \geq  \left(\frac{( n-1)(\kappa-h)}{p\Lambda}\right)^p.
\eeqn\end{thm}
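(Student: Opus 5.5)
We apply Theorem \ref{thm11}(2) with the distance function $r=d_F(o,\cdot)$ from an arbitrary point $o\in M$, choosing
$$G(s)\equiv 1,\qquad H(t)=\frac{1}{p}\,|\mathfrak s_c(t)|^p$$
(or a constant multiple chosen below). Then $H(0)=0$ and $H'(t)=|\mathfrak s_c(t)|^{p-2}\mathfrak s_c(t)\mathfrak s_c'(t)$, so $H'(0)=0$. Since $\mathfrak s_c\mathfrak s_c'=\frac12\mathfrak s_c(2t)$ (the double-angle identity, valid in all three cases $c>0$, $c=0$, $c<0$), we get
$$H'(t)=\tfrac12\,|\mathfrak s_c(t)|^{p-2}\mathfrak s_c(2t).$$
Thus $|G H'(u)|^{p'}=2^{-p'}|\mathfrak s_c^{p-2}(u)\mathfrak s_c(2u)|^{p'}$, which is the denominator in the claimed estimate. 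Near $0$ we have $H'(s)\sim |s|^{p-2}s\neq 0$ for $s\neq0$, and $G\neq 0$, so the hypotheses of Theorem \ref{thm11}(2) hold. Note that Theorem \ref{thm11} is stated for $1<p<n$, whereas the target allows any $p>1$. To avoid this restriction I would instead apply Proposition \ref{prop31} directly on $\Omega=M$ with $\rho$ the Busemann-type exhaustion, or simply use $r$ on $M\setminus\{o\}$ together with the fact that $G\equiv1$ has no singularity. Then the only issue is $\Delta r$ near $o$, which is integrable like $r^{-1}$ (and $r^{-1}\in L^1_{loc}$ since $n\ge2$). The main technical step is precisely this distributional justification at $o$.

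By Cartan--Hadamard, $\mathbf K\le-\kappa^2\le0$ implies that $r$ is smooth away from $o$. Theorem \ref{thm21} with curvature bound $-\kappa^2$ then gives
$$\Delta r\ge (n-1)\bigl(\kappa\coth(\kappa r)-h\bigr)\ge (n-1)(\kappa-h)>0.$$
Since $G'=0$ and $H\ge0$,
$$\int_M [G'(r)+G(r)\Delta r]H(u)\,d\mathfrak m\ \ge\ \frac{(n-1)(\kappa-h)}{p}\int_M|\mathfrak s_c(u)|^p\,d\mathfrak m\ \ge0,$$
so the absolute value in (\ref{eq1}) can be dropped in the favorable direction. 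Substituting into (\ref{eq1}) gives
$$\int_M\max\{F^{*p}(\pm du)\}\,d\mathfrak m\ \ge\ \frac{\bigl(\frac{(n-1)(\kappa-h)}{p}\bigr)^p\bigl(\int|\mathfrak s_c(u)|^p\bigr)^p}{2^{-p}\bigl(\int|\mathfrak s_c^{p-2}(u)\mathfrak s_c(2u)|^{p'}\bigr)^{p-1}},$$
using $(2^{-p'})^{p-1}=2^{-p}$. This is exactly the stated constant $\bigl(2(n-1)(\kappa-h)/p\bigr)^p$.

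For the spectral gap, let $c\to0$. Then $\mathfrak s_c(t)\to t$ and $\mathfrak s_c(2t)\to 2t$, so the right-hand side tends to
$$\Bigl(\frac{2(n-1)(\kappa-h)}{p}\Bigr)^p\frac{(\int|u|^p)^p}{2^{p}(\int|u|^p)^{p-1}}=\Bigl(\frac{(n-1)(\kappa-h)}{p}\Bigr)^p\int|u|^p.$$
Alternatively, take $c=0$ directly. Finally, the bound $\max\{F^{*p}(\pm du)\}\le\Lambda^pF^{*p}(du)$ and the Rayleigh quotient definition of $\lambda_{1,p}$ yield
$$\lambda_{1,p}\ge\Bigl(\frac{(n-1)(\kappa-h)}{p\Lambda}\Bigr)^p.$$
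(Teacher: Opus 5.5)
Your proposal is correct and follows the paper's proof. The paper takes $G\equiv 1$ and $H=|\mathfrak{s}_c|^p$; your factor $\frac1p$ is immaterial because (\ref{eq1}) is invariant under $H\mapsto tH$, $t\neq 0$. It then uses $\Delta r\ge (n-1)(\kappa\coth(\kappa r)-h)\ge (n-1)(\kappa-h)$ from Theorem \ref{thm21}, Theorem \ref{thm11}(2), and finally $c=0$ together with $\max\{F^{\ast p}(\pm du)\}\le \Lambda^p F^{\ast p}(du)$, exactly as you do.

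You also note that Theorem \ref{thm11} is stated only for $1<p<n$, which the paper passes over, and your fix is sound. With $G\equiv 1$, integrating by parts on $M\setminus B^+_\epsilon(o)$ only adds the boundary term $\int_{S_\epsilon(o)}H(u)\,dA\ge 0$, which in any case tends to $0$. Since $\Delta r\sim (n-1)/r$ is locally integrable for $n\ge 2$, no capacity argument is needed and the estimate holds for every $p>1$.
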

\begin{proof}
 Let $$G(s) \equiv 1, \ \ \ \ H(t)=|\mathfrak{s}_c(t)|^p$$ for any $s>0$ and $c, t\in \mathbb R$.
Then $G'(s)=0$ and $|H'(t)|=\frac{p}{2}|\mathfrak{s}_c(t)|^{p-2}|\mathfrak{s}_c(2t)|$. Obviously,
 $G(s)$ and $H(t)$ satisfy the requirements of Theorem \ref{thm11}. By Theorem \ref{thm21}, we have $\Delta r\geq(n-1)(\kappa\coth(\kappa r)-h)\geq(n-1)(\kappa-h)$. From this and Theorem \ref{thm11}(2), one obtains
\beqn
	\int_{M}\max\{F^{\ast p}(\pm du)\}d\mathfrak{m} &\geq &
\frac{\left(\int_{M}(n-1)(\kappa-h)|\mathfrak{s}_c(u)|^pd\mathfrak{m}\right)^p}
{\left(\int_{M}(\frac{p}{2})^{p'}|\mathfrak{s}^{p-2}_c(u)\mathfrak{s}_c(2u)|^{p'}d\mathfrak{m}\right)^{p-1}}\\
&=&\left(\frac{2(n-1)(\kappa-h)}{p}\right)^{p}\frac{(\int_{M}|\mathfrak{s}_c(u)|^pd\mathfrak{m})^p}
{\left(\int_{M}|\mathfrak{s}^{p-2}_c(u)\mathfrak{s}_c(2u)|^{p'}d\mathfrak{m}\right)^{p-1}}.
\eeqn
In particular, when $c=0$, we have $\mathfrak{s}_c(u)=u$ and hence
\beqn
	\Lambda^p\int_MF^{\ast p}(du)dm\geq \int_{M}\max\{F^{\ast p}(\pm du)\}d\mathfrak{m} \geq
\left(\frac{(n-1)(\kappa-h)}{p}\right)^{p}\int_{M}|u|^pd\mathfrak{m},
\eeqn
which implies the conclusion. \end{proof}

\bigskip

% \noindent {\bf Acknowledgements} The project is partially supported by NNSFC (Nos.12471044, 12071423) and Zhejiang Provincial NSFC (No. LZ26A010004).
%The authors would like to thank Alexandru Krist\'aly and Wei Zhao for their suggestions on the first draft of this paper.

\bigskip

\noindent {\bf Data Availability} Data sharing is not applicable to this article as no data sets were generated or analyzed during the current study.
\bigskip

\noindent {\bf Declarations}

\noindent {\bf Conflict of interest} \  The authors declare that they have no conflict of interest.

\end{document}